\numberwithin{equation}{section}
\newtheorem{theorem}{Theorem}[section]
\newtheorem{acknowledgements}{Acknowledgements}
\newcommand{\setR}{\mathbb{R}} 
\newcommand{\setC}{\mathbb{C}}
\newcommand{\setN}{\mathbb{N}}
\newcommand{\setSp}{\sigma}
\newcommand{\svec}[1]{\left(\begin{smallmatrix}#1 \end{smallmatrix}\right)}
\newcommand{\Xspace}{\mathcal{X}}
\newcommand{\Yspace}{\mathcal{Y}}
\newcommand{\Vspace}{\mathcal{V}}
\newcommand{\Vint}{\Vspace^{\rm int}}
\newcommand{\Vext}{\Vspace^{\rm ext}}
\newcommand{\tracespace}{\mathcal{Z}}
\newcommand{\ui}{u^{\rm inc}}
\newcommand{\us}{u^{\rm sc}}
\newcommand{\uint}{u^{\rm int}}
\newcommand{\uext}{u^{\rm ext}}
\newcommand{\vint}{v^{\rm int}}
\newcommand{\vext}{v^{\rm ext}}
\newcommand{\uML}{U}
\newcommand{\vML}{V}
\newcommand{\nrGM}{M}
\newcommand{\stot}{s}
\newcommand{\atot}{a}
\newcommand{\btot}{b}
\newcommand{\sext}{s^{\rm ext}}
\newcommand{\sint}{s^{\rm int}}
\newcommand{\aext}{a^{\rm ext}}
\newcommand{\aint}{a^{\rm int}}
\newcommand{\bext}{b^{\rm ext}}
\newcommand{\bint}{b^{\rm int}}
\newcommand{\Cs}{C_{s}}
\newcommand{\mupar}{\kappa}
\newcommand{\kapn}{\kappa_0}
\newcommand{\Oi}{\Omega_{\rm int}} 
\newcommand{\Oe}{\Omega_{\rm ext}}
\newcommand{\smat}[1]{\left(\begin{smallmatrix} #1 \end{smallmatrix}\right)}
\newcommand{\paren}[1]{\left( #1 \right)}
\newcommand{\braces}[1]{\left\{ #1 \right\}}
\newcommand{\norm}[1]{\left\| #1 \right\|}
\newcommand{\abs}[1]{\left| #1 \right|}
\newcommand{\lsp}{\left\langle}
\newcommand{\rsp}{\right\rangle}
\newcommand{\diffq}[2]{\frac{\partial #1}{\partial #2}}
\newcommand{\calD}{\mathcal{D}}
\newcommand{\calO}{\mathcal{O}}
\newcommand{\SpLam}{\Sigma}
\newcommand{\bpM}{\Gamma}
\newcommand{\bp}{\tilde \Gamma}
\DeclareMathOperator{\tr}{tr}
\DeclareMathOperator{\Span}{span}
\DeclareMathOperator{\MT}{ {\cal M}_{\kapn} }
\DeclareMathOperator{\LT}{{\cal L}}
\DeclareMathOperator{\OpT}{\cal T}
\DeclareMathOperator{\OpC}{\cal C }
\DeclareMathOperator{\OpS}{\rm S}
\DeclareMathOperator{\Bdv}{\mathcal{B}}
\DeclareMathOperator{\Id}{I}
\DeclareMathOperator{\supp}{supp}
\DeclareMathOperator{\dist}{dist}
\newtheorem{lemm}[theorem]{Lemma}
\newtheorem{prop}[theorem]{Proposition}
\newtheorem{defi}[theorem]{Definition}
\newtheorem{rem}[theorem]{Remark}
\def\vect#1#2{\left(\!\!\begin{array}{c} #1 \\ #2 \end{array}\!\!\right)}
\begin{document}

\title{Convergence of infinite element methods for scalar waveguide problems}
%\subtitle{Do you have a subtitle?\\ If so, write it here}

%\titlerunning{Short form of title}        % if too long for running head

\author{Thorsten Hohage         \and
        Lothar Nannen %etc.
}
\maketitle

\begin{abstract}
We consider the numerical solution of scalar wave equations in domains which 
are the union of a bounded domain and a finite number of infinite cylindrical waveguides. The aim of this paper is to provide 
a new convergence analysis of both the Perfectly Matched Layer (PML) method and the 
Hardy space infinite element method in a unified framework. 
We treat both diffraction and resonance problems. 
The theoretical error bounds are compared with errors in numerical experiments. 
\end{abstract}

\section{Introduction}
\label{intro}
We consider the numerical solution of time harmonic wave equations in domains which are 
the union of some bounded interior domain and a finite number of semi-infinite waveguides 
(see Fig.~\ref{Fig:sketchproblem}). We consider both the case of excitation by 
incoming propagating modes in one of the waveguides or by a source in the interior domain 
and the case of resonance problems. 
For the analysis of existence, uniqueness and properties of solutions to such problems 
we refer to \cite{NP:94,BNiBonLeg:2004} and the references therein.

\begin{figure}
  \capstart
 \begin{center}
 %\resizebox{0.4\textwidth}{!}{\input{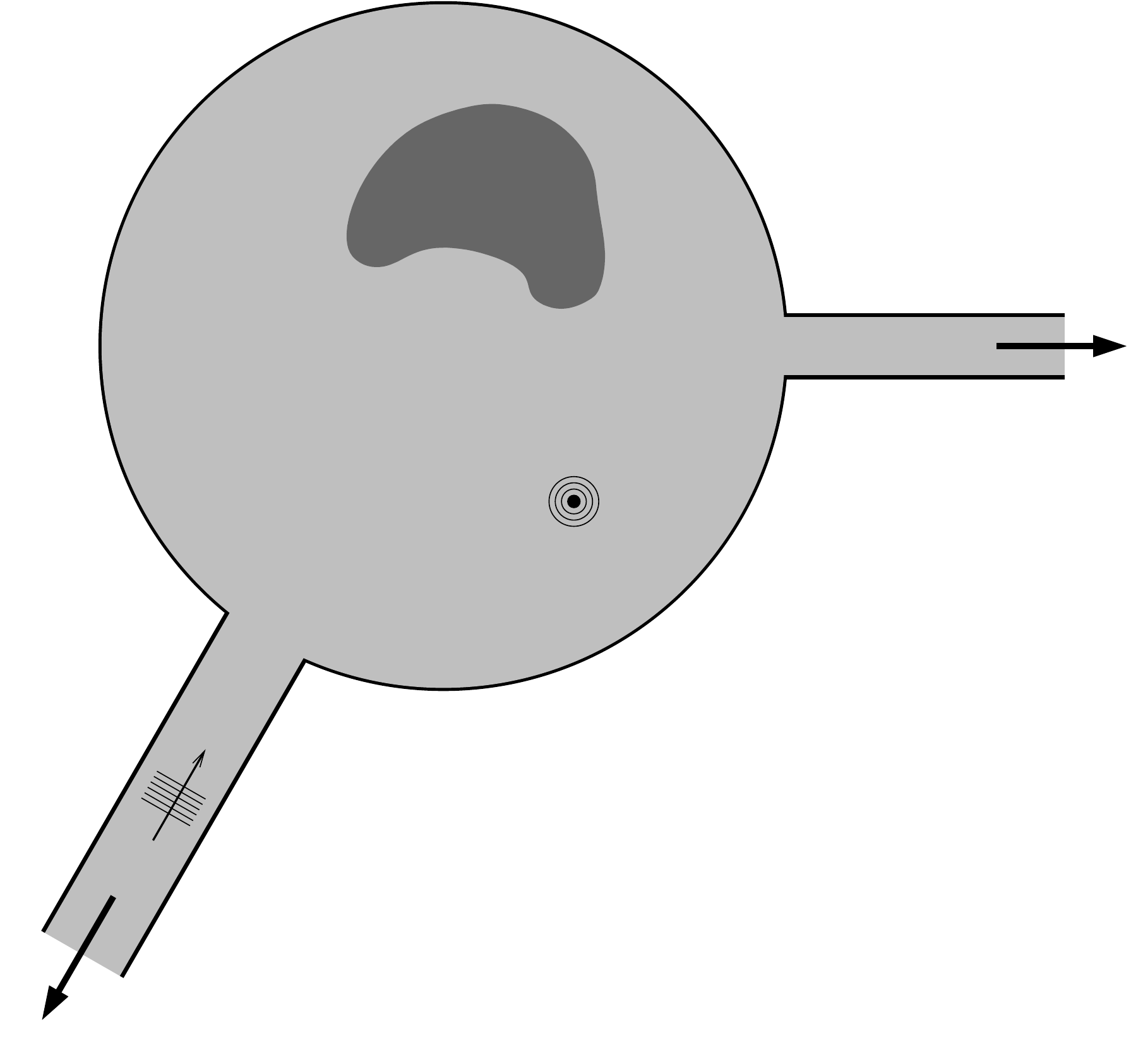_t}} 
\scalebox{0.25}{
\begin{picture}(0,0)
\includegraphics{sketch_problem2.pdf}
\end{picture}
\setlength{\unitlength}{4144sp}
\begin{picture}(8265,7699)(841,-7280)
%\put(5041,-3616){\makebox(0,0)[lb]{\smash{{{\fontsize{30}{24.0} \color[rgb]{0,0,0}$f$}}}}}
\put(9091,-2176){\makebox(0,0)[lb]{\smash{{{\fontsize{30}{24.0} \color[rgb]{0,0,0}$u_{\rm s}$}}}}}
\put(2251,-4876){\makebox(0,0)[lb]{\smash{{{\fontsize{30}{24.0}\color[rgb]{0,0,0}$u_{\rm i}$}}}}}
\put(856,-7171){\makebox(0,0)[lb]{\smash{{{\fontsize{30}{24.0}\color[rgb]{0,0,0}$u_{\rm s}$}}}}}
\put(2341,-1501){\makebox(0,0)[lb]{\smash{{{\fontsize{30}{24.0}\color[rgb]{0,0,0}$\Oi$}}}}}
\put(2056,-2581){\makebox(0,0)[lb]{\smash{{{\fontsize{40}{24.0}\color[rgb]{0,0,0}$-\Delta u - \kappa^2 u = f$}}}}}
\end{picture}
} 
 \caption{sketch of the waveguide problem under consideration}
 \label{Fig:sketchproblem}
\end{center}
\end{figure}

If such problems are solved numerically by finite element methods, the waveguides require a special treatment to avoid reflections at artificial boundaries 
in the waveguides.  
A simple option is to precompute the propagating modes by 
solving an eigenvalue problem on the cross section of each waveguide and use this 
to construct an approximation to the Dirichlet-to-Neumann map. However, the 
Dirichlet-to-Neumann map depends in a non-polynomial way on the wave number. 
For resonance problems this destroys the eigenvalue structure of the problem. 
Nevertheless, there exist alternative numerical methods for waveguide 
resonance problems, e.g.\ using Greens functions \cite{Rotteretal:04} 
or eigenfunction expansions in the interior domain \cite{LM:08,Racecetal:09}.

In this paper we analyze the convergence of numerical methods which are
based on a variational formulation in the waveguides. We present two general 
convergence theorems based on $\OpS$-coercivity arguments \cite{BonnetBenDhia:10}. 
It is used to prove both convergence of the Perfectly Matched 
Layer (PML) method and the Hardy space infinite element method (HSM). This is the first complete 
convergence analysis of the Hardy space method in dimension greater than 1. 
Moreover, it differs from previous convergence results for the PML method 
\cite{LassasSomersalo:98,CW:03,BNiBonLeg:2004,PC2,KimPasciak:09,kalvin:11} 
in the fact that the truncation of the PML layer (with Dirichlet boundary conditions) 
is treated as an approximation error, not as an error on a continuous level. 
In this sense we interpret PML as an infinite element method, i.e.~as a
conforming discretization of a variational formulation of the original problem 
on an unbounded domain. Therefore no modeling error has to be taken into account. 
Moreover, it gives rise to a unified treatment of PML and HSM.
Finally, we discuss a method to treat frequencies close to Wood anomalies by 
the Hardy space method. 

The plan of this paper is as follows: After a general formulation of the problem in Sec.~\ref{sec:setting} 
we state in Sec.~3 the main convergence theorems for diffraction and resonance problems in an abstract framework, which are proved in Sec.~4. 
In the following we apply
the convergence theorems to the PML (Sec.~5) and to the Hardy space method (Sec.~6) 
both for scalar Helmholtz diffraction and resonance problems. 
In the last section we give numerical convergence studies for the Hardy space method
and show that the method is applicable to resonance problems. 

\section{Formulation of the problem}
\label{sec:setting}
Let $\Omega= \Oi\cup\bigcup_{l=1}^L (W_l\cup \bpM_l)\subset\setR^d$ be a Lipschitz domain, which is the disjoint union of a 
bounded Lipschitz domain $\Omega_{\rm int}$, $L$ semi-infinite cylinders (waveguides) $W_1,\dots, W_L$ and interfaces
$\bpM_l$. More precisely, the $W_l$ and $\bpM_l$ are of the form $W_l= \eta_l((0,\infty)\times \bp_l)$ 
and $\bpM_l:=\eta_l(\{0\}\times\bp_l)$ where $\eta_l:\setR^d\to\setR^d$ is 
a Euclidean motion and $\bp_l\subset\setR^{d-1}$ is a bounded Lipschitz domain. The interfaces 
are assumed to be contained in $\overline{\Oi}$. The exterior domain is defined as 
$\Oe:= \bigcup_{l=1}^LW_l$. 

For the sake of simplicity of exposition we will consider the standard 
Helmholtz equation in all our examples. However, we will formulate our convergence 
results in an abstract framework which includes certain variable coefficients 
in the interior domain and in the lateral directions of the waveguides.
Consider the diffraction problem 
\begin{subequations}\label{eqs:scat_prbl}
\begin{align}
\label{eq:scat_prbl_pde}
&-\Delta u - \kappa^2u = f&&\mbox{in }\Omega \\
\label{eq:scat_prbl_bc}
&\Bdv u = g && \mbox{on }\partial \Omega\\
\label{eq:scat_prbl_rc}
&u-\ui &&\mbox{satisfies a radiation condition in }\Oe.
\end{align}
\end{subequations}
Here $\kappa>0$ is a given wave number, 
$\Bdv$ is a trace operator, e.g.\ the Dirichlet trace operator $\Bdv u = u|_{\partial\Omega}$ or 
the Neumann trace operator $\Bdv u = \diffq{u}{\nu}|_{\partial\Omega}$, and we assume that 
$\supp f$ and $\supp g$ are contained in $\overline{\Oi}$. Moreover, $\ui$ is some given incident field 
in $\Oe$ satisfying $(\Delta+\kappa^2)\ui=0$ in $\Oe$ and $\Bdv \ui=0$ on $\partial\Oe \setminus \bigcup_{l=1}^L \bpM_l$. The terms 
radiation condition and incident will be defined in Definition \ref{defi:modal_rad_cond} below. 

We will also consider resonance problems, which have the form  \eqref{eqs:scat_prbl}, but $f,g$, and $\ui$ vanish, 
$\kappa$ may be complex valued, and both $\kappa$ and $u\neq 0$ are considered as unknowns. 

In this paper we will consider several equivalent formulations of the radiation condition leading to different 
numerical algorithms. We start with the most standard one based on a series expansion of the solution.
We may assume w.l.o.g.\ that $W_l= \{0\}\times \bp_l$ for some $l=1,\dots,L$ (otherwise change to the coordinate system 
given by $\eta_l$). Moreover, we assume that the coefficients of $\Bdv$ are constant on $W_l$ and that the negative Laplacian 
$-\Delta_l: \calD(-\Delta_l) \subset L^2(\bp_l) \to L^2(\bp_l)$ with a 
domain of definition $\calD(-\Delta_l)$ incorporating $\Bdv$ is self-adjoint and has a compact resolvent. 
For the Dirichlet trace operator this is the case with 
$\calD(-\Delta_l)=H^2(\bp_l)\cap H^1_0(\bp_l)$, and for the Neumann trace operator 
with $\calD(-\Delta_l) = \{v\in H^2(\bp_l):\diffq{v}{\nu}=0 \mbox{ on }\partial\bp_l\}$. 
Then there exists a complete orthonormal set $\{\varphi_n:n\in\setN\}\subset L^2(\bp_l)$ of eigenfunctions, 
$-\Delta_l\varphi_n = \lambda_n\varphi_n$ with $\lambda_n\geq 0$. (Here and in the following we omit the index 
$l$.) We generally assume in this paper that 
\begin{equation}\label{eq:noWoodAnomaly}
\kappa^2\notin \bigcup_{l=1}^L\setSp\paren{-\Delta_l}.
\end{equation}
Then by separation of variables every solution to \eqref{eq:scat_prbl_pde} and \eqref{eq:scat_prbl_bc} 
with $\kappa>0$ has the form
\begin{equation}\label{eq:expansion_u}
u(x,y) = \sum_{n=1}^\infty \paren{c_n\exp\paren{i\kappa_nx}+d_n\exp(-i\kappa_nx)}\varphi_n(y)\qquad \mbox{in } W_l
\end{equation}
where $c_n$ and $d_n$ are complex coefficients, $x\in (0,\infty)$, $y\in \bp_l$ and
\begin{equation}
\label{eq:defkn}
 \kappa_n:=\begin{cases}
            \sqrt{\kappa^2 - \lambda_n},\qquad &\kappa^2>\lambda_n\\
            i\sqrt{\lambda_n-\kappa^2},\qquad &\kappa^2<\lambda_n
           \end{cases}.
\end{equation}
The functions $\exp(i\kappa_nx)\varphi_n(y)$ and $\exp(-i\kappa_nx)\varphi_n(y)$ are called waveguide modes. 
If $\kappa^2<\lambda_n$, then $\exp(-i\kappa_n x)$ is exponentially growing as $x\to\infty$ whereas $\exp(i\kappa_n x)$ is 
exponentially decaying. The functions $\exp(i\kappa_nx)\varphi_n(y)$ are called \emph{evanescent modes}. 
Since we expect a physical solution to be bounded, we require that $d_n=0$ for such $n$. 
The modes $\exp(\pm i\kappa_nx)\varphi_n(y)$ with $\kappa^2>\lambda_n$ are called \emph{propagating modes}. 
Since $\lim_{n\to\infty}\lambda_n=\infty$ every waveguide $W_l$ supports at most a finite number of propagating modes. 
If the time dependence is given by $\exp(-i\omega t)$ then $\exp(i(\kappa_n x-\omega t))$ is propagating to the right whereas 
$\exp(-i(\kappa_n x+\omega t))$ is propagating to the left. Moreover, if $u$ is an acoustic and transverse magnetic electric field, 
then $\mathcal{J}_l(u) = \Im \int_{\bpM_l} \overline{u} \diffq{u}{x} ds$ can be interpreted as average outward energy flux through 
$\bpM_l$, and 
$\mathcal{J}_l(e^{i\kappa_n x}\varphi_n(y))>0$ whereas
$\mathcal{J}_l(e^{-i\kappa_n x}\varphi_n(y))<0$. Therefore, we call $\exp(i\kappa_nx)\varphi_n(y)$ an 
\emph{outward propagating mode} and $\exp(-i\kappa_nx)\varphi_n(y)$ an \emph{inward propagating mode}.

\begin{defi}[modal radiation condition]\label{defi:modal_rad_cond} 
Let $u$ be a solution to \eqref{eq:scat_prbl_pde} and \eqref{eq:scat_prbl_bc} with $\kappa>0$ and assume \eqref{eq:noWoodAnomaly}. 
We say that $u$  satisfies the 
\emph{(modal) radiation condition} if it is a linear combination of evanescent and outward propagating modes 
in each waveguide $W_l$, $l=1,\dots,L$. 
$u$ is called an \emph{incident field} if it is a linear combination of inward propagating modes 
in each waveguide $W_l$, $l=1,\dots,L$. 
\end{defi}

\section{Formulation of the main convergence theorems}
\label{sec:ConvTheo}
We first formulate the assumptions of our general convergence theorem. To illustrate
and motivate these assumptions we show in this section that they are satisfied in the 
simplest case 
\begin{equation}\label{eq:kappa_bd}
\kappa^2<\inf \bigcup_{l=1}^L \setSp(-\Delta_l), 
\end{equation}
i.e.\ that none of the waveguides supports a propagating mode. We assume that $\Bdv$ is 
the Dirichlet trace operator $g=0$, and of course $\ui=0$. Moreover, let $L=1$ and 
$W:=(0,\infty)\times \bp$ and set $W:=W_1$, $\bp:=\bp_1$, and 
$\Delta_{\bp}:=\Delta_1$. 
Then we obtain 
the following variational formulation of \eqref{eqs:scat_prbl} in $\Vspace = H^1_0(\Omega)$:
\begin{equation}\label{eq:noguidedmodes}
\int_{\Omega} \paren{\nabla u\cdot \nabla \overline{v} - \kappa^2 u\overline{v}} \,dx 
= \int_{\Oi} f\overline{v}\,dx
\end{equation}

\hypertarget{AssA}{}
{\bf Assumption~A: Exterior and interior spaces.} 
\emph{Let $\Vint$ and $\Vext$ be two Hilbert spaces, let $\tracespace$ be another Hilbert space 
(a trace space), and consider bounded, linear, surjective (trace) operators
$\tr_+:\Vext\to \tracespace$ and $\tr_-:\Vint\to\tracespace$. We set
\[
\Vspace:=\paren{\svec{\uint\\\uext}\in\Vint\oplus\Vext:\tr_+\uext = \tr_-\uint}.
\]
Moreover, there exist Hilbert spaces $\Xspace^1_l, \Xspace^2_l, \Yspace^1_l$, and $\Yspace^2_l$ for $l=1,\dots, L$ 
such that $\Xspace^2_l \subset \Xspace^1_l$ and 
$\Yspace^2_l \subset \Yspace^1_l$ are densely and continuously embedded, and 
\begin{equation}\label{eq:defi_Vext}
\begin{aligned}
&\Vext=\bigoplus_{l=1}^L \Vext_l,\qquad 
\Vext_l:=\Xspace^2_l \otimes \Yspace^1_l \cap \Xspace^1_l \otimes \Yspace^2_l,\\
&\lsp\uext,\vext \rsp_{\Vext}=\sum_{l=1}^L\paren{\lsp\uext_l,\vext_l \rsp_{\Xspace^2_l \otimes \Yspace^1_l}
+\lsp\uext_l,\vext_l \rsp_{\Xspace^1_l \otimes \Yspace^2_l}}.
\end{aligned}
\end{equation}
Finally, let $\sint:\Vint\times\Vint\to \setC$ and $\sext:\Vext\times \Vext \to \setC$ 
be bounded sesquilinear forms and set
\[
\stot:\Vspace\times \Vspace\to\setC, \qquad \stot\paren{\svec{\uint\\ \uext},\svec{\vint\\ \vext}}:=
\sint(\uint,\vint)+\sext(\uext,\vext).
\]}

\medskip
As a closed subspace of $\Vint\oplus\Vext$ the space $\Vspace$ equipped with the scalar product
$\lsp\svec{\uint\\\uext}, \svec{\vint\\\vext}\rsp_{\Vspace}:=\lsp \uint,\vint\rsp_{\Vint} +\lsp\uext,\vext\rsp_{\Vext}$
is a Hilbert space. The spaces $\Xspace^{j}_l$ correspond to the infinite directions of the waveguides whereas 
$\Yspace^{j}_l$ correspond to the cross sections. 

\begin{rem}
 $\Vext_l$ is a subset of the tensor product Hilbert space $\Xspace^1_l \otimes \Yspace^1_l$, which is defined via completion under the scalar product
 $$ \lsp u_1 \otimes v_1, u_2 \otimes v_2 \rsp_{\Xspace^1_l \otimes \Yspace^1_l}:= \lsp u_1,u_2\rsp_{\Xspace^1_l } \lsp v_1,v_2\rsp_{\Yspace^1_l },
 \quad u_1,u_2\in \Xspace^1_l,\quad v_1,v_2\in \Yspace^1_l.$$
 Hence, $u\otimes v \in \Vext_l$ is well defined. But $\Vext_l$ is not a tensor product Hilbert space due to the definition of the scalar product
 in \eqref{eq:defi_Vext}.
\end{rem}

\emph{Verification for \eqref{eq:noguidedmodes}:} Assumption~\hyperlink{AssA}{A} is satisfied if we split $u\in H^1_0(\Omega)$ into
$\uint:=u|_{\Oi}$ and $\uext:=u|_{\Oe}$. More precisely, we have for the exterior space
\begin{align*}
 &\Xspace^1:= L^2((0,\infty)),\qquad \Xspace^2:= H^1((0,\infty)), \qquad \Yspace^1:=L^2(\bp),\qquad \Yspace^2:=H^1_0(\bp),\\
 &\Vext := \Xspace^2 \otimes \Yspace^1 \cap \Xspace^1 \otimes \Yspace^2 \sim \{\uext\in H^1(\Oe):\uext|_{\partial\Oe\setminus\bpM}=0\},
\end{align*}
with norms $\|u\|_{\Xspace^2}^2:= \|u\|_{L^2}^2 + \|u'\|_{L^2}^2$ and
\[
\|u\|_{\Yspace^2}^2 := \|u\|_{L^2}^2+\|\nabla u\|_{L^2}^2 
= \lsp u-\Delta_{\bp}u,u\rsp_{L^2} = \|(\Id-\Delta_{\bp})^{1/2}u\|_{L^2}^2,
\] 
such that the norm defined by \eqref{eq:defi_Vext} is given by $\|u\|_{\Vext}^2= 2\|u\|_{L^2(\Oe)}^2+\|\nabla u\|_{L^2(\Oe)}^2$. Moreover,
\begin{align*}
&\Vint = \{\uint\in H^1(\Oi):\uint|_{\partial\Oi\setminus\bpM}=0\},\\ 
&\tracespace = H^{1/2}_0(\bpM) =\mathcal{D}\paren{(\Id-\Delta_{\bpM})^{1/4}}, 
\qquad \tr_-\uint := \uint|_{\bpM}, \qquad \tr_+\uext := \uext|_{\bpM},\\ 
&\sint(\uint,\vint)= \int_{\Oi} \paren{\nabla \uint\cdot \nabla \overline{\vint} - \kappa^2 \uint\overline{\vint}} \,dx,\\
&\sext(\uext,\vext)= \int_{\Oe} \paren{\nabla \uext\cdot \nabla \overline{\vext} - \kappa^2 \uext\overline{\vext}} \,dx.
\end{align*}
In the following we will assume that $\tracespace$ is equipped with the inner product 
$\lsp u,v\rsp_{\tracespace}:= \sum_{n=1}^{\infty}(1+\lambda_n)^{1/2}\lsp u,\varphi_n\rsp\lsp\varphi_n,v\rsp$. 

\medskip
\hypertarget{AssB}{}
{\bf Assumption~B: separation of $\Vext$.} 
\emph{
There exists a complete orthogonal system $\{\varphi_n:n\in\setN\}\subset \bigoplus_{l=1}^L\Yspace^2_l$ 
with the following properties: %Let $l(n)\in\{1,\dots, L\}$ be chosen such that $\varphi_n\in\Yspace^2_{l(n)}$ for all $n\in\setN$. 
\begin{enumerate}
 \item For all  $n\in\setN$ we can choose $l(n)\in\{1,\dots, L\}$ such that for $\varphi_n=(\varphi_n^{(1)},\dots,\varphi_n^{(L)})$
 it holds $\varphi_n^{(j)}= 0$ for $j\neq l(n)$.
  \item The subspaces (not to be confused with $\Vext_l$ of Ass.~\hyperlink{AssA}{A}) 
\[
\Vspace_n:= \Xspace^1_{l(n)}\otimes \Span\{\varphi_n\} \cap \Vext
\]
are orthogonal in $\Vext$ both with respect to the inner product of $\Vext$ 
and with respect to $\sext$, and the spaces $\tr_+(\Vspace_n)$ are orthogonal 
in $\tracespace$. 
\item \label{Ass:BProp3} Finally, $\dim \tr_+(\Vspace_n)<\infty$ for all $n\in\setN$ and
\[
\Vext = \bigcup_{n\in\setN}\Vspace_n \qquad \mbox{and}\qquad 
\tracespace = \bigcup_{n\in\setN} \tr_+(\Vspace_n).
\]
\end{enumerate}
}

\medskip
It follows from the assumption $\Vext = \bigcup_{n\in\setN}\Vspace_n$ that
the spaces 
\begin{equation}\label{def:Xspace_n}
\Xspace_n:=\{u_n\in\Xspace^1_{l(n)}:u_n\otimes \varphi_n\in\Vext\},\quad \lsp u_n,v_n\rsp_{\Xspace_n}:=\lsp u_n\otimes\varphi_n,v_n\otimes\varphi_n\rsp_{\Vext},
\end{equation}
equipped with the Hilbert norms $\|u_n\|_{\Xspace_n}:=\sqrt{\lsp u_n, u_n\rsp_{\Xspace_n}}$
are again Hilbert spaces, and every $\uext\in\Vext$ has a unique representation of the form 
\begin{equation}\label{eq:Uexpansion}
\begin{aligned}
&\uext= \sum_{n\in\setN} u_n\otimes \varphi_n,\qquad
\|\uext\|_{\Vext}^2 = \sum_{n\in\setN}\|u_n\|_{\Xspace_n}^2,\qquad u_n\in\Xspace_n.
\end{aligned}
\end{equation}
We define the sesquilinear forms $\stot_n:\Xspace_n\times \Xspace_n\to \setC$ by 
\[
\stot_n(u,v):=\stot(u\otimes \varphi_n,v\otimes \varphi_n).
\] 
If $\vext = \sum_{n\in\setN}v_n\otimes \varphi_n$, we have
\[
\sext(\uext,\vext) =\sum_{n\in\setN} \stot_n(u_n,v_n)
\]
due to the assumed orthogonality of the spaces $\Vspace_n$ w.r.t.\ $\sext$. 

\emph{Verification for \eqref{eq:noguidedmodes}:} 
Let as in the introduction $\{\varphi_n:n\in\setN\}\subset H_0^1(\bp)=\Yspace^2$ be the complete orthogonal set of eigenfunctions to $-\Delta$, i.e.
$-\Delta\varphi_n = \lambda_n\varphi_n$ with $\lambda_n\geq 0$. Since $\{\varphi_n:n\in\setN\}$ is also a complete orthogonal set in
$ H_0^{1/2}(\bp)\sim \tracespace$ and $L^2(\bp)=\Yspace^1$, the orthogonality assumptions are easy to check, $\dim \tr_+(\Vspace_n)=\dim \Span\{\varphi_n\}=1$, and property
\ref{Ass:BProp3} of Ass.~\hyperlink{AssB}{B}  holds. We have
\begin{align}\label{eq:sn_Xn}
\begin{aligned}
&\|u\|_{\Xspace_n}^2 = \|u'\|_{L^2}^2 + (\lambda_n+2)\|u\|_{L^2}^2,\\
&\stot_n(u,v) = \lsp u',v' \rsp_{L^2} + (\lambda_n-\kappa^2)\lsp u,v\rsp_{L^2}.
\end{aligned}
\end{align}

\medskip
\hypertarget{AssC}{}
{\bf Assumption~C: boundedness and coercivity.} 
\emph{There exists a constant $\nrGM\in \setN$ (later on the number of guided modes),
a stability constant $\Cs>0$, a coercivity constant $\alpha>0$ and rotations $\theta_1,\dots,\theta_\nrGM\in\{z\in\setC:|z|=1\}$ such that
\begin{subequations}
\label{eqs:Ass_sn}
\begin{align}
&|\stot_n(u_n,v_n)|\leq \Cs  \|u_n\|_{\Xspace_n}\|v_n\|_{\Xspace_n},&& n\in\setN  \label{eq:Ass_sn_bd} \\
&\Re\paren{\theta_n \stot_n(u_n,u_n)}\geq \alpha \|u_n\|_{\Xspace_n}^2,&& n=1,\dots,\nrGM \label{eq:Ass_sn_coerc1}\\
&\Re\paren{\stot_n(u_n,u_n)}\geq \alpha \|u_n\|_{\Xspace_n}^2,&& n>\nrGM \label{eq:Ass_sn_coerc2}
\end{align}
\end{subequations}
for all $u_n,v_n\in\Xspace_n$. Moreover, 
there exists a compact linear operator $K:\Vint\to\Vint$ such that 
\begin{equation}\label{eq:compact_pert}
\Re\sint(\uint,\uint) + \Re\lsp K\uint,\uint\rsp_{\Vint}\geq \alpha\|\uint\|^2\qquad \mbox{for all }\uint\in\Vint.
\end{equation}%
}

\medskip
It is essential that the constants $\Cs $ and $\alpha$ do not depend on $n$. 
Due to \eqref{eq:compact_pert}, $\sint$ is coercive up to a compact perturbation.  
In our application (PML or HSM formulation for Helmholtz waveguide problems) $\sext$ is neither coercive nor coercive up to a compact 
perturbation since guided and evanescent modes must be treated differently.
This requires the use of $S$-coercivity in our analysis. To deal with the coupling to the 
interior domain, we have to assume that $\nrGM$ (the number of guided modes) is finite. 

\emph{Verification for \eqref{eq:noguidedmodes}:} Here $\nrGM=0$, and due to \eqref{eq:sn_Xn} 
assumption \eqref{eq:Ass_sn_bd} holds true with $\Cs =1$, \eqref{eq:Ass_sn_coerc1} is empty, and \eqref{eq:Ass_sn_coerc2} 
holds true with $\alpha = (\lambda_1-\kappa^2)/(\lambda_1+2)$, which is positive due to \eqref{eq:kappa_bd}. \\
\eqref{eq:compact_pert} holds true with $K= (\kappa^2+1)J^*J$ where 
$J:\Vint\hookrightarrow L^2(\Omega)$ is the embedding operator, which is compact. 

\medskip
\hypertarget{AssD}{}
{\bf Assumption~D: discrete subspaces.} 
\emph{We consider families of finite dimensional nested subspaces $\Vint_{h}\subset \Vint$ and 
$\Yspace_{h,l}\subset\Yspace^2_l$, which are decreasing in a parameter $h>0$, 
and a family of nested subspaces $\Xspace_{N,l}\subset \Xspace^2_l$, which are increasing in a parameter 
$N\in\setN$ such that $\bigcup_{h>0}\Vint_h\subset \Vint$, 
$\bigcup_{N\in\setN}\Xspace_{N,l}\subset \Xspace^2_l$, and $\bigcup_{h>0}\Yspace_{h,l} \subset\Yspace^2_l$
are dense for $l=1,\dots,L$. Assume that 
\begin{align}\label{eqs:cond_discrete}
&\Vext_{h,N}\subset \Vext\qquad \mbox{and}\qquad 
 \tr_+\paren{\Vext_{h,N}}=\tr_-\paren{\Vint_{h}}
\end{align}
with $\Vext_{h,N}:=\bigoplus_{l=1}^L\Xspace_{N,l}\otimes \Yspace_{h,l}$ and set 
\[
\Vspace_{h,N}:=
\braces{\smat{\uint\\ \uext}\in \Vint_{h} \oplus \Vext_{h,N}: \tr_-\uint =\tr_+\uext}.
\]
Finally, assume there exist operators $\tr_-^{\dagger}\in L(\tracespace,\Vint)$ and 
$\tr_{h,-}^{\dagger}\in L(\tracespace,\Vint_h)$ such that 
$\tr_-\tr_-^{\dagger}=\Id_{\tracespace}$, $\tr_-\tr_{h,-}^{\dagger}\tr_-\uint_h=\tr_-\uint_h$ for all $\uint_h\in\Vint_h$
and
\begin{equation}\label{eq:trace_inverse_conv}
\lim_{h\to 0}\|\tr_-^{\dagger}g-\tr_{h,-}^{\dagger}g\|_{\Vint} =0
\qquad \mbox{for all }g\in\tracespace.
\end{equation}}

\medskip
The conditions \eqref{eqs:cond_discrete} obviously ensure that $\Vspace_{h,N}\subset\Vspace$. 
We emphasize that it is \emph{not} assumed that any of the orthogonal basis functions $\varphi_n$ is contained 
in any of the subspaces $\Yspace_h$. The functions $\varphi_n$ are only used in our analysis, but typically 
not in the numerical algorithms. 

\emph{Verification for \eqref{eq:noguidedmodes}:} We may start with any sequence of finite element 
spaces $\Vint_h\subset \Vint$ such that the best approximations to any $\uint\in\Vint$ in 
$\Vint_h$ converge to $\uint$ as $h\to 0$ and for each $h$ some sub-mesh yields an admissible mesh 
for $\bp$. For $\Vext_{h,N}=\Xspace_N \otimes \Yspace_h$ we set $\Yspace_h:= \tr_-(\Vspace_h)$
and define a non-decreasing mapping $\setN \ni N \mapsto \rho_N>0$ such that $\rho_N \to \infty$ for $N\to \infty$.
Let $\widetilde \Xspace_N$ be any $H^1((0,\rho_N))$-conforming finite element space %with $\tilde v(\rho_N)=0$ for all $\tilde v \in \widetilde \Xspace_N$ 
and
$$\Xspace_N :=\{v \in H^1((0,\infty))~|~v|_{(0,\rho_N)}\in \widetilde \Xspace_N,\quad v|_{[\rho_N,\infty)}\equiv 0 \} \subset \Xspace^2.$$
In order to get nested subspaces, $\Xspace_{N+1}$ has to be constructed such that for $v \in \Xspace_{N+1}$ we have $v|_{(0,\rho_N)}\in \widetilde \Xspace_N$.
This can be done by a suitable refinement of the mesh in $[0,\rho_N]$ ($h$ and/or $p$ refinement) and adding new finite elements for $[\rho_N,\rho_{N+1}]$. 

Then $\Xspace_N\otimes \Yspace_h$ is a finite element space of tensor 
product finite elements. The continuous right inverse $\tr_-^{\dagger}$ can be constructed by considering the boundary value problem
\begin{align*}
&-\Delta v + v = 0 && \mbox{in }\Oi,\\
&v=0 &&\mbox{on }\partial\Oi\setminus \bpM,\\
&v=g &&\mbox{on }\bpM,
\end{align*}
which obviously has a unique weak solution by the Lax-Milgram lemma, and setting $\tr_-^{\dagger}g:=v$. 
$\tr_{h,-}^{\dagger}g$ is the finite element approximation to $\tr_-^{\dagger}g$ in 
$\Vint_h$, and \eqref{eq:trace_inverse_conv} holds true because of the convergence of 
the finite  element method. 

\medskip
Now we are in a position to formulate our main convergence theorem:
\begin{theorem}\label{theo:ConvTheo}
Suppose Assumptions \hyperlink{AssA}{A}, \hyperlink{AssB}{B}, \hyperlink{AssC}{C} and \hyperlink{AssD}{D} hold true and assume that the variational equation 
\begin{equation}\label{eq:abstract_cont_prbl}
\stot\paren{\svec{\uint\\ \uext},\svec{\vint\\ \vext}} = F\paren{\svec{\vint\\ \vext}}
\qquad \mbox{for all }\svec{\vint\\ \vext}\in \Vspace
\end{equation}
has at most one solution for all $F\in\Vspace^*$. Then:
\begin{enumerate}
\item Equation \eqref{eq:abstract_cont_prbl} has a unique solution $u=\svec{\uint\\ \uext}\in\Vspace$ for all 
$F\in\Vspace^*$, and $u$ depends continuously on $F$.
\item There exist constants $h_0,C>0$ such that the discrete variational problems 
\begin{equation}\label{eq:abstract_discr_prbl}
\stot\paren{u_{h,N},v_{h,N}} = F\paren{v_{h,N}}\qquad \mbox{for all }
v_{h,N}\in \Vspace_{h,N}
\end{equation}
have unique solutions for all $h\leq h_0$ and all $N\in \setN$, and 
\begin{equation}\label{eq:error_bound}
\norm{u-u_{h,N}}_{\Vspace} 
\leq C\inf_{w_{h,N}\in\Vspace_{h,N}}\norm{u-w_{h,N}}_{\Vspace}.
\end{equation}
Moreover, the right hand side of \eqref{eq:error_bound} tends to $0$ as $h\to 0$ and $N\to\infty$ for all 
$u\in\Vspace$.
\end{enumerate}
\end{theorem}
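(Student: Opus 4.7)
The plan is to employ the $\OpS$-coercivity framework of \cite{BonnetBenDhia:10}. The key object is a suitable isomorphism $\OpS:\Vspace\to\Vspace$ such that $\stot(\cdot,\OpS\,\cdot)$ is coercive on $\Vspace$ up to a compact perturbation, together with a discrete counterpart $\OpS_{h,N}$ on $\Vspace_{h,N}$ that approximates $\OpS$ well enough to transfer this coercivity to the discrete level. Once both are in place, part~1 follows from the Fredholm alternative combined with the uniqueness hypothesis, and part~2 follows from a standard perturbation/density argument for $\OpS$-coercive conforming Galerkin methods.

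First I would construct $\OpS$ on $\Vext$ by setting, for $\uext=\sum_n u_n\otimes\varphi_n$ as in \eqref{eq:Uexpansion},
\begin{equation*}
\OpS\uext := \sum_{n=1}^{\nrGM}\overline{\theta_n}\,u_n\otimes\varphi_n + \sum_{n>\nrGM} u_n\otimes\varphi_n.
\end{equation*}
Because $|\theta_n|=1$ and the spaces $\Vspace_n$ are $\Vext$-orthogonal, this defines a unitary operator on $\Vext$; together with the sesquilinearity of $\stot_n$, Assumption~\hyperlink{AssC}{C} yields $\Re\,\sext(\uext,\OpS\uext)\geq \alpha\|\uext\|_{\Vext}^2$. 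To extend $\OpS$ to $\Vspace$ while preserving the coupling $\tr_+\uext=\tr_-\uint$, I introduce the analogous rotation $\tilde\OpS$ on $\tracespace$, well defined thanks to the $\tracespace$-orthogonality of the $\tr_+(\Vspace_n)$ in Assumption~\hyperlink{AssB}{B}, and set
\begin{equation*}
\OpS\svec{\uint\\ \uext} := \svec{\uint + \tr_-^{\dagger}(\tilde\OpS-\Id)\tr_-\uint \\ \OpS\uext}.
\end{equation*}
A direct check using $\tr_-\tr_-^{\dagger}=\Id_{\tracespace}$ shows that the image lies in $\Vspace$ and that $\OpS$ is bijective. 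Since $\tilde\OpS-\Id$ acts only on the finite-dimensional space $\bigoplus_{n=1}^{\nrGM}\tr_+(\Vspace_n)$, the interior correction $\tr_-^{\dagger}(\tilde\OpS-\Id)\tr_-$ has finite rank, hence is compact; combined with \eqref{eq:compact_pert} and the exterior estimate, this shows that $\stot(\cdot,\OpS\,\cdot)$ is coercive on $\Vspace$ up to a compact perturbation, and part~1 follows from the Fredholm alternative and the uniqueness hypothesis.

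For part~2 I would build $\OpS_{h,N}:\Vspace_{h,N}\to \Vspace_{h,N}$ by replacing, for $n\leq \nrGM$, the eigenfunction $\varphi_n$ by a discrete approximation $\varphi_{n,h}\in\Yspace_{h,l(n)}$ (available by the density hypothesis of Assumption~\hyperlink{AssD}{D}), applying the same rotation on the finite-dimensional tensor-product subspace spanned by $\Xspace_{N,l(n)}\otimes\Span\{\varphi_{n,h}\}$, and enforcing the coupling through $\tr_{h,-}^{\dagger}$ in place of $\tr_-^{\dagger}$. The main obstacle, and the technical heart of the argument, is to establish
\begin{equation*}
\sup_{v_{h,N}\in\Vspace_{h,N},\,\|v_{h,N}\|_{\Vspace}=1}\|(\OpS-\OpS_{h,N})v_{h,N}\|_{\Vspace}\longrightarrow 0 \quad\text{as } h\to 0 \text{ and } N\to\infty.
\end{equation*}
This estimate combines the approximability of the \emph{finitely many} guided modes $\varphi_1,\dots,\varphi_{\nrGM}$ by $\varphi_{n,h}$, the tensor product structure of $\Vext_l$ from Assumption~\hyperlink{AssA}{A}, and the convergence \eqref{eq:trace_inverse_conv} of the discrete trace lift; the finiteness of $\nrGM$ is essential to upgrade modewise convergence to a uniform bound.

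Given this approximation property, boundedness of $\stot$ yields $|\stot(u_{h,N},(\OpS-\OpS_{h,N})u_{h,N})|\leq C_s \epsilon(h,N)\|u_{h,N}\|_{\Vspace}^2$ with $\epsilon(h,N)\to 0$, and the coercivity up to compact perturbation of $\stot(\cdot,\OpS\,\cdot)$ transfers to a uniform discrete inf-sup condition for $\stot$ on $\Vspace_{h,N}$ for all $h\leq h_0$ and $N$ large enough: the compact part is absorbed by a Schatz-type duality argument exploiting the continuous uniqueness and the density of $\bigcup_{h,N}\Vspace_{h,N}$ in $\Vspace$. The quasi-optimal bound \eqref{eq:error_bound} is then the Céa estimate for inf-sup stable conforming Galerkin methods, and convergence of the best approximation error to $0$ follows directly from the density hypotheses in Assumption~\hyperlink{AssD}{D} together with the definition of $\Vspace_{h,N}$ and the existence of the discrete trace lift.
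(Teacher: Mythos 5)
Your route is the paper's route: the paper constructs the same modal rotation operator $\OpS^{\rm ext}$ (it places $\theta_n$ in the first argument, $\stot(\OpS u,v)$, while you place $\overline{\theta_n}$ in the second; by sesquilinearity these are equivalent), extends it to $\Vspace$ through the trace space exactly as you do via $\tr_-^{\dagger}(\OpS^{\tracespace}-\Id)\tr_-$, observes that this interior correction is finite rank hence compact, and obtains part~1 from Lax--Milgram plus Riesz--Fredholm and the uniqueness hypothesis. For part~2 the paper likewise builds a discrete $\OpS_h$ using $P^{\Yspace}_h\varphi_n$ and $\tr_{h,-}^{\dagger}$, proves the inf-sup perturbation estimate (Lemma~\ref{lemm:Sh_lower_bound}), a density lemma (Lemma~\ref{lemm:density}), and absorbs the compact part $K-\tilde K$ by the standard theory of Galerkin methods for compactly perturbed coercive problems --- your ``Schatz-type duality argument.'' So in outline you have reproduced the proof.

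There is, however, one concrete shortfall. You establish the key smallness $\epsilon(h,N)\to 0$ only \emph{jointly} as $h\to 0$ and $N\to\infty$, and accordingly conclude discrete inf-sup stability ``for $h\leq h_0$ and $N$ large enough.'' The theorem claims unique solvability for \emph{all} $N\in\setN$ with only $h\leq h_0$, and the paper stresses right after the statement that no condition $N\geq N_0$ appears. Correspondingly, the paper's Lemma~\ref{lemm:Sstab_conv} proves the stronger $\lim_{h\to 0}\sup_{N\in\setN}\|(\Id-P_{h,N})\OpS P_{h,N}\|_{L(\Vspace)}=0$. The mechanism is that for $\uext\in\Vext_{h,N}$ the modal coefficients $u_n$ in the expansion \eqref{eq:Uexpansion} automatically belong to $\Xspace_N$, so the additive correction $\OpS^{\rm ext}_h\uext=\uext+\sum_{n=1}^{\nrGM}(\theta_n-1)\,u_n\otimes P^{\Yspace}_h\varphi_n$ maps $\Vext_{h,N}$ into itself, and the error $\OpS-\OpS_h$ is governed solely by the $h$-approximation of the finitely many $\varphi_n$ in the $\Yspace$-direction and by the trace-lift convergence \eqref{eq:trace_inverse_conv}; nothing in the axial direction is approximated, so no $N$-dependence enters at all. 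Your variant --- ``applying the same rotation on the subspace $\Xspace_{N,l(n)}\otimes\Span\{\varphi_{n,h}\}$'' --- should be replaced by this additive correction that keeps the \emph{exact} coefficient $u_n$ and only swaps $\varphi_n$ for its projection: extracting a component along a discrete direction $\varphi_{n,h}$ is not obviously a uniformly bounded operation in $h$, and as stated it does not visibly yield the required operator-norm bound. Two smaller points: the density of $\bigcup_{h,N}\Vspace_{h,N}$ in $\Vspace$ does not follow ``directly'' from Assumption~D because of the coupling constraint $\tr_-\uint=\tr_+\uext$; the paper devotes Lemma~\ref{lemm:density} to it, via an orthogonal-complement argument using the trace lift --- you name the right ingredients but should supply this step.
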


Note that Theorem \ref{theo:ConvTheo} involves an assumption $h\leq h_0$, which is already necessary 
for the interior problem, but no assumption $N\geq N_0$. 

\medskip
Let us assume that the sesquilinear form $\stot:=\stot_\mupar$ depends on a parameter $\mupar\in\Lambda$ in a subset $\Lambda \subset \setC$. 
We are looking for solutions
$(\mupar,u) \in \Lambda \times \Vspace \setminus \{0\}$ to the continuous 
generalized eigenvalue problem
 \begin{equation}
 \label{eq:EWP}
  \stot_\mupar(u,v)=0 \qquad \mbox{for all }v \in \Vspace.
 \end{equation}
These eigenpairs will be approximated by solutions  
$(\mupar_{h,N},u_{h,N}) \in \Lambda \times \Vspace_h \setminus \{0\}$ to the discrete eigenvalue problems
\begin{equation}\label{eq:diskrEWP}
  \stot_{\mupar_{h,N }} (u_{h,N},v_{h,N})=0 \qquad \mbox{for all }v_{h,N} \in \Vspace_{h,N}.
\end{equation}
Let $\SpLam\subset\Lambda$ denote the set of eigenvalues $\mupar$ 
and $\SpLam_{h,N}$ the set of discrete eigenvalues $\mupar_{h,N}$.

\hypertarget{AssE}{}
{\bf Assumption~E: eigenproblem setting.} \emph{Let $\Lambda\subset \setC$ be open 
and assume that the sesquilinear form $\stot:=\stot_\mupar$ in Assumption~\hyperlink{AssA}{A}
depends on a parameter $\mupar\in \Lambda$.  Moreover, suppose that each 
$\mupar_0\in\Lambda$ 
has a neighborhood $\widehat\Lambda$ in which the vectors 
$\varphi_n$ in Assumption~\hyperlink{AssB}{B} and the quantities $\Cs ,\alpha,\theta_n,\nrGM$ and $K$ 
in Assumption~\hyperlink{AssC}{C} 
can be chosen independently of $\mupar\in\widehat\Lambda$. 
Finally, assume that $\stot_\mupar$ depends holomorphically on $\mupar\in\Lambda$, 
i.e.\ for the operator $T_\mupar:\Vspace \to \Vspace$ defined 
by $\stot_\mupar (u,v)=\lsp T_\mupar u,v\rsp_\Vspace$, $u,v \in \Vspace$  there exist 
for all $\mupar_0 \in \Lambda$ the derivative 
$T_{\mupar_0}':=\lim_{\mupar \to \mupar_0} \frac{1}{\mupar-\mupar_0} (T_\mupar - T_{\mupar_0})$ 
where the limit exists in the norm of $L(\Vspace)$.}

\emph{Verification for \eqref{eq:noguidedmodes}:} If $\stot_\kappa$ is defined by the left hand side of \eqref{eq:noguidedmodes}, 
then it depends holomorpically on $\kappa$. Clearly $\varphi_n$ is independent of $\kappa$, $\nrGM=0$ does not depend on $\kappa$ and $\theta_n$
is not needed. If $\min\setSp(-\Delta_1)=\lambda_1$ and for all $\kappa_0\in \Lambda$ there holds $\Re(\kappa_0^2)<\lambda_1$, then  
$\alpha=\inf_{\kappa \in \widehat \Lambda} \Re(\lambda_1-\kappa^2)/(\lambda_1+2)$ is independent of $\kappa_0$ as well and positive, if $\widehat \Lambda$
is compact. In the same way $\Cs$ and $K$ can be chosen independently of $\kappa\in \widehat \Lambda$.

\begin{rem}
Up to now we have not defined a complex square root and therefore $\kappa_n$ defined in 
\eqref{eq:defkn} for $\kappa>0$ is not defined for $\kappa\in \setC\setminus\setR$. 
We will do this later in Def.~\ref{def:BranchCutSquareRoot}. At this point, we only consider 
\eqref{eq:noguidedmodes} and do not care whether the eigenvalues are physically meaningful.
\end{rem}

\begin{theorem}
\label{Theo:ConvEWP} 
If Assumptions \hyperlink{AssA}{A}, \hyperlink{AssB}{B}, \hyperlink{AssC}{C}, \hyperlink{AssD}{D} and \hyperlink{AssE}{E} 
hold true and if there exists a $\mupar \in \Lambda$ such that $T_\mupar$ is invertible, then $\SpLam\subset\Lambda$ is discrete without
accumulation points and
\begin{equation}\label{eq:Hausdorff_spectra}
\lim_{h\to 0,N\to\infty}\dist(\SpLam\cap\Lambda',\SpLam_{h,N}\cap\Lambda') = 0
\end{equation}
for all compact subsets $\Lambda'\subset\Lambda$. 
Here the Hausdorff distance of two subsets $A,B\subset \setC$ is denoted by 
$\dist(A,B):=\max\{\sup_{a\in A}\inf_{b\in B}|a-b|,\sup_{b\in B}\inf_{a\in A}|a-b|\}$.
\end{theorem}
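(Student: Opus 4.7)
The plan is to prove three things in turn: discreteness of $\SpLam$ in $\Lambda$, upper semi-continuity of $\SpLam_{h,N}$ (no spurious discrete eigenvalues accumulate in $\Lambda'\setminus\SpLam$), and lower semi-continuity (every $\mupar_0\in\SpLam\cap\Lambda'$ is approximated by $\SpLam_{h,N}$). The whole scheme is the classical analytic-Fredholm plus spectral-projector strategy of Descloux--Nassif--Rappaz, adapted to the $\OpS$-coercive setting furnished by Theorem~\ref{theo:ConvTheo}.

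\textbf{Discreteness.} Inspecting the proof of Theorem~\ref{theo:ConvTheo}, the $\OpS$-coercivity of $\stot_\mupar$ together with the compact perturbation $K$ from \eqref{eq:compact_pert} and the finite number $\nrGM$ of ``rotated'' modes yields a decomposition $T_\mupar = A_\mupar + B_\mupar$ in which $A_\mupar$ is boundedly invertible and $B_\mupar$ is compact. Hence $T_\mupar$ is Fredholm of index $0$ on all of $\Lambda$. By Assumption~\hyperlink{AssE}{E} the map $\mupar\mapsto T_\mupar$ is holomorphic, and by hypothesis $T_\mupar$ is invertible for some $\mupar\in\Lambda$. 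The analytic Fredholm theorem then yields that $\mupar\mapsto T_\mupar^{-1}$ is a meromorphic $L(\Vspace)$-valued function on $\Lambda$ with poles exactly on $\SpLam$, so $\SpLam$ is discrete without accumulation points.

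\textbf{Uniform stability on compact sets avoiding $\SpLam$.} Fix a compact $K\subset\Lambda\setminus\SpLam$. For each $\mupar_0\in K$, Assumption~\hyperlink{AssE}{E} gives a neighborhood $\widehat\Lambda$ on which the constants in Assumptions~\hyperlink{AssB}{B} and \hyperlink{AssC}{C} are $\mupar$-independent; Theorem~\ref{theo:ConvTheo} then applies uniformly on $\widehat\Lambda\cap K$. A standard covering-compactness argument converts this into uniform constants $h_0(K),\,C(K)$ such that the discrete operator $T_{\mupar,h,N}:\Vspace_{h,N}\to\Vspace_{h,N}^*$ is invertible with uniformly bounded inverse for all $\mupar\in K$, $h\le h_0(K)$. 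In particular $\SpLam_{h,N}\cap K=\emptyset$ eventually, which yields the one-sided bound $\sup_{\mupar_{h,N}\in\SpLam_{h,N}\cap\Lambda'}\dist(\mupar_{h,N},\SpLam\cap\Lambda')\to 0$.

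\textbf{Lower semi-continuity via spectral projectors.} Given $\mupar_0\in\SpLam\cap\Lambda'$, pick a disc $\overline{D_r(\mupar_0)}\subset\Lambda$ whose boundary $\gamma$ contains no point of $\SpLam$. Define
\[
P_0:=\frac{1}{2\pi\mathrm{i}}\oint_\gamma T_\mupar^{-1}\,d\mupar,\qquad
P_{0,h,N}:=\frac{1}{2\pi\mathrm{i}}\oint_\gamma T_{\mupar,h,N}^{-1}\,d\mupar,
\]
where $T_{\mupar,h,N}^{-1}$ is extended to $\Vspace^*$ by composition with the Galerkin projection. By the preceding step, $P_{0,h,N}$ is well defined for small $h$, large $N$. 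Quasi-optimality \eqref{eq:error_bound} together with the density built into Assumption~\hyperlink{AssD}{D} yields pointwise convergence $T_{\mupar,h,N}^{-1}F\to T_\mupar^{-1}F$ in $\Vspace$ for each $F\in\Vspace^*$, uniformly in $\mupar\in\gamma$ by continuity. Since $\operatorname{Range}(P_0)$ is finite-dimensional, this upgrades to norm convergence $\|P_{0,h,N}-P_0\|_{L(\Vspace)}\to 0$, via the classical trick of writing $P_{0,h,N}-P_0=(P_{0,h,N}-P_0)P_0 + (P_{0,h,N}-P_0)(I-P_0)$ and handling the second term by a dual contour representation. Once $\|P_{0,h,N}-P_0\|<1$, the two projectors have equal (positive, finite) rank, so at least one discrete eigenvalue lies inside $D_r(\mupar_0)$. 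Letting $r\to 0$ finishes the Hausdorff convergence \eqref{eq:Hausdorff_spectra}.

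\textbf{Main obstacle.} The delicate point is the norm convergence of the spectral projectors on the whole space $\Vspace$: Theorem~\ref{theo:ConvTheo} only gives quasi-optimal, hence pointwise, convergence of the discrete solution operators, but the jump in dimension of generalized eigenspaces can only be controlled in the operator norm. Handling the ``non-eigenspace'' component $(I-P_0)$ requires either a collective-compactness/duality argument exploiting the compact perturbation $K$, or passing through the adjoint form (which satisfies the same abstract assumptions), to convert the quasi-optimal estimate into uniform operator-norm convergence along $\gamma$.
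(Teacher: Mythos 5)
Your skeleton (analytic Fredholm for discreteness, uniform discrete stability on compacta avoiding $\SpLam$, a contour argument for existence of nearby discrete eigenvalues, and a covering argument for the Hausdorff limit) parallels the paper's: discreteness is obtained exactly as you say, and your ``uniform stability'' step is the paper's Lemma~\ref{lemm:beta_ieq}, whose proof indeed runs through the factorization with $\hat K:=\svec{K-\tilde K & 0\\ 0 & 0}$ and a covering of the relatively compact set $\{\hat K f:\|f\|\leq 1\}$ --- precisely the collective-compactness-type argument you defer to your final paragraph, so that part is sound in outline.

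The genuine gap is in your lower-semicontinuity step. Assumption~\hyperlink{AssE}{E} only requires $\mupar\mapsto \stot_\mupar$ to be \emph{holomorphic}; the eigenproblem \eqref{eq:EWP} is a holomorphic operator-family problem, not a linear pencil $A-\mupar\Id$. For such families the contour integral $P_0:=\frac{1}{2\pi i}\oint_\gamma T_\mupar^{-1}\,d\mupar$ is \emph{not} a spectral projector and can vanish at a genuine eigenvalue: for $T_\mupar=\mupar^2\,\Id$ on $\setC$ the point $\mupar_0=0$ is an eigenvalue, yet $\oint_\gamma \mupar^{-2}\,d\mupar=0$. Hence neither ``$\operatorname{Range}(P_0)$ is finite-dimensional and nonzero'' nor ``$P_0u_0=u_0$'' is available, and the equal-rank conclusion collapses; the correct multiplicity-counting object would be $\frac{1}{2\pi i}\oint_\gamma T_\mupar^{-1}T_\mupar'\,d\mupar$ in the Gohberg--Sigal/Keldysh sense, i.e.\ Karma's framework, which the paper cites and deliberately avoids. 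Moreover, even for linear dependence your upgrade from pointwise to norm convergence of the projectors is exactly what fails in this setting: the paper stresses that $H^1(\Oe)\hookrightarrow L^2(\Oe)$ is not compact, and compactness of $\hat K$ buys the uniform discrete inf-sup bounds of Lemma~\ref{lemm:beta_ieq} but not norm resolvent convergence on all of $\Vspace$, so your two proposed fixes remain programmatic. The paper's Proposition~\ref{prop:EW}, part~2, sidesteps both obstacles: on $\partial B_\epsilon(\mupar_0)$ one has $\beta_{h,N}(\mupar)\geq\rho$ uniformly, while $\beta_{h,N}(\mupar_0)\leq C_\Lambda\|P_{h,N}u_0-u_0\|_\Vspace/\|P_{h,N}u_0\|_\Vspace\to 0$ by testing with the projected eigenvector; if no discrete eigenvalue lay in the closed disk, Cauchy's integral formula for the matrix-valued holomorphic map $\mupar\mapsto\tilde T_\mupar^{-1}$ would give $\|\tilde T_{\mupar_0}^{-1}\|\leq\sup_{\partial B_\epsilon(\mupar_0)}\|\tilde T_\mupar^{-1}\|$, contradicting $\beta_{h,N}(\mupar)\geq\rho>\beta_{h,N}(\mupar_0)$. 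This uses no projectors, no rank counting and no norm resolvent convergence, and it is valid for nonlinear holomorphic parameter dependence --- if you want to salvage your write-up, replace the projector comparison by this maximum-principle argument (note that strong convergence $P_{0,h,N}f\to P_0f$, which you do get from Lemma~\ref{lemm:beta_ieq} and quasi-optimality, cannot certify $P_{0,h,N}\neq 0$ precisely because $P_0$ itself may vanish).
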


Further convergence results including convergence of eigenvectors (or eigenspaces), 
multiplicities of eigenvalues, and rates of convergence are intended 
for future research. 

\section{Proof of Theorems \ref{theo:ConvTheo} and \ref{Theo:ConvEWP}}
The convergence theory is based on $\OpS$-coercivity arguments: We are going to construct an isomorphism $\OpS:\Vspace\to\Vspace$ such that 
the sesquilinear form $\stot\paren{\OpS \cdot,\cdot}$ is coercive up to a compact perturbation. Hence, unique solvability of the continuous problem can be shown
as usual using the Lax-Milgram Lemma combined with Riesz-Fredholm theory. An important difficulty is the fact that the discrete 
spaces $\Vspace_{h,N}$ are not invariant under $\OpS$. In order to overcome this difficulty we will introduce later on a 
family of operators $\OpS_{h}:\Vspace_{h,N}\to \Vspace_{h,N}$ with $\|\OpS-\OpS_h\|_{L(\Vspace_{h,N},\Vspace)}\to 0$ for $h\to 0$.

\subsection{Construction and properties of the operator $\OpS$} 
Using the notation of Ass.~\hyperlink{AssB}{B} we define in the exterior domain the operator
\begin{align}
\OpS^{\rm ext}: \Vext\to \Vext,\qquad 
\OpS^{\rm ext} \uext:= \sum_{n=1}^\nrGM \theta_n\uext_n\otimes \varphi_n
+ \sum_{n=\nrGM+1}^\infty \uext_n\otimes \varphi_n \label{def:OpSext}
\end{align}
where $\uext$ has the expansion \eqref{eq:Uexpansion}. Recall, that the rotations $\theta_n$ and the constant $\nrGM$ were introduced in Ass.~\hyperlink{AssC}{C}.
With the operator $\OpS^{\rm ext}$ we have S-coercivity and boundedness of $\sext$ by Assumption~\hyperlink{AssC}{C}:
\begin{align}\label{eq:coercive_ext}
\begin{aligned}
&\Re \paren{\sext(\OpS^{\rm ext}\uext,\uext)}\geq \alpha \|\uext\|_{\Vext}^2\\
&\abs{\sext(\uext,\vext)}\leq \Cs  \|\uext\|_{\Vext} \|\vext\|_{\Vext}
\end{aligned}
\end{align}
for all $\uext,\vext\in \Vext$. Note, that $\OpS^{\rm ext}$ has a bounded inverse given by
\[
[\OpS^{\rm ext}]^{-1} \uext:= \sum_{n=1}^\nrGM \frac{1}{\theta_n}\uext_n\otimes \varphi_n
+ \sum_{n=\nrGM+1}^\infty \uext_n\otimes \varphi_n.
\]
We need to extend $\OpS^{\rm ext}$ to the whole space $\Vspace$ via the trace space $\tracespace$ defined in Assumption~\hyperlink{AssA}{A}. First we define a bounded 
linear operator $\OpS^{\tracespace}:\tracespace\to\tracespace$ such that $\OpS^{\rm ext}$ and $\OpS^{\tracespace}$ intertwine 
with $\tr_+$:
\begin{equation}
\label{eq:DefSZ}
\OpS^{\tracespace}\tr_+ = \tr_+\OpS^{\rm ext}. 
\end{equation}
As $\tracespace = \bigoplus_{n\in\setN}\tr_+(\Vspace_n)$ by Assumption~\hyperlink{AssB}{B} we have to set 
$\OpS^{\tracespace}\psi_n:=\theta_n\psi_n$ for 
all $\psi_n\in \tr_+(\Vext_n)$ and $n\leq \nrGM$ and $\OpS^{\tracespace}\psi_n:=\psi_n$ if $n>\nrGM$.  As for $\OpS^{\rm ext}$ 
it is easy to see that $\OpS^{\tracespace}$ has a bounded inverse. 

Using $\tr_{-}^{\dagger}:\tracespace \to \Vint$ defined in Ass.~\hyperlink{AssD}{D} we can define $\OpS:\Vspace\to\Vspace$ by 
\begin{equation}
\label{eq:DefOpS}
 \OpS \vect{\uint}{\uext} := 
\vect{\uint + \tr_{-}^{\dagger}(\OpS^{\tracespace}-\Id)\tr_{-} \uint}
{\OpS^{\rm ext}\uext}.
\end{equation}
The image of $\Vspace$ under $\OpS$ is in fact contained in $\Vspace$ since
\begin{align*}
\tr_{-}\uint +\tr_{-}\tr_{-}^{\dagger}(\OpS^{\tracespace}-\Id)\tr_{-}\uint 
= \OpS^{\tracespace}\tr_{-}\uint = \OpS^{\tracespace}\tr_+\uext
= \tr_+\OpS^{\rm ext}\uext\,.
\end{align*}
$\OpS$ has the bounded inverse
\[
 \OpS^{-1} \vect{\uint}{\uext} = 
\vect{\uint + \tr_{-}^{\dagger}([\OpS^{\tracespace}]^{-1}-\Id)\tr_{-} \uint}
{\left[\OpS^{\rm ext}\right]^{-1}\uext},
\]
which is easily verified using the identity 
\[
\OpS^{\tracespace}-\Id + [\OpS^{\tracespace}]^{-1}-\Id+ (\OpS^{\tracespace}-\Id)([\OpS^{\tracespace}]^{-1}-\Id)=0.
\]

\subsection{Proof of Theorem \ref{theo:ConvTheo}, part 1} \label{sec:proofConvTheoPart1}
With the substitution $\tilde{u}:= \OpS^{-1}\svec{\uint\\ \uext}$ 
the variational equation \eqref{eq:abstract_cont_prbl} is equivalent to 
\[
\tilde{\stot}\paren{\tilde{u},v}
= F\paren{v},\qquad
\tilde{\stot}\paren{\tilde{u},v}
:= \stot\paren{\OpS\tilde{u},v}
\]
for all $v=(\vint,\vext)\in\Vspace$. 
Since the homogeneous equation is assumed to be uniquely 
solvable, the  bounded linear operator $A:\Vspace\to \Vspace$ defined by 
$\tilde{\stot}(u,v)= \lsp Au,v\rsp_{\Vspace}$ for all 
$u,v\in\Vspace$ is injective.  
Due to \eqref{eq:compact_pert} and \eqref{eq:coercive_ext} we have  
\begin{equation}\label{eq:Gaarding_abstr}
\begin{aligned}
&\Re\tilde{\stot}\paren{\svec{\uint\\ \uext},\svec{\uint\\ \uext}} + 
\Re \lsp (K-\tilde{K})\uint,\uint\rsp_{\Vint} 
\geq \alpha \norm{\svec{\uint\\ \uext}}_{\Vspace}^2,\\
&\tilde{K}:=\tr_-^{\dagger}(\OpS^{\tracespace}-\Id)\tr_-\,
\end{aligned}
\end{equation}
for all $\svec{\uint\\ \uext}\in\Vspace$. 
It follows from the Lax-Milgram lemma that $A + \svec{K-\tilde{K} & 0 \\ 0 &0}$ is boundedly invertible. 
Since $\dim (\OpS^{\tracespace}-\Id)(\tracespace)<\infty$, the operator $\tilde{K}$ is compact. 
Together with the injectivity of $A$ it follows from Riesz-Fredholm theory that $A$ has a bounded inverse. 
This implies the first assertion. 

\subsection{Proof of Theorem \ref{theo:ConvTheo}, part 2} 
We first show sufficient conditions for discrete inf-sup stability for 
general $S$-coercive problems: 
\begin{lemm}\label{lemm:Sh_lower_bound}
Let $\Vspace$ be any complex Hilbert space and $\stot:\Vspace \times \Vspace \to \setC$ a bounded 
sesquilinear form and $\OpS :\Vspace \to \Vspace$ a bounded linear operator.
Moreover, let $\Vspace_h \subset \Vspace$ be a series of closed subspaces,  
$P_h:\Vspace\to \Vspace_h$ the orthogonal projections. Then 
\[
\inf_{\substack{v_h \in \Vspace_h \\ v_h\neq 0}} \sup_{\substack{u_h \in \Vspace_h\\ u_h\neq 0}} 
\frac{\left|\stot(u_h,v_h)\right|}{\|u_h\|_\Vspace \|v_h\|_\Vspace}
   \geq \frac{1}{\|S\|}\paren{\inf_{\substack{v_h \in \Vspace_h \\ v_h\neq 0}} 
   \sup_{\substack{u_h \in \Vspace_h\\ u_h\neq 0}} \frac{\left|\stot(\OpS u_h,v_h)\right|}{\|u_h\|_\Vspace \|v_h\|_\Vspace}    - \|\stot\|\,\|(\Id-P_h)SP_h\|_{L(\Vspace)}}.
\]
In particular, if $\stot$ is $\OpS$-coercive, i.e.\ there exists a constant
$\tilde{\alpha}>0$, such that $\stot(\OpS u, u)\geq \tilde{\alpha} \|u\|_\Vspace^2$ and if
\begin{equation}\label{eq:cond_discrete_infsup_stability}
\lim_{h\to 0} \|(\Id-P_h)SP_h\|_{L(\Vspace)}= 0,
\end{equation}
then $\stot$ is discretely inf-sup stable for sufficiently small $h>0$ with constants independent of $h$ . 
\end{lemm}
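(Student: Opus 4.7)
The essential trick is a splitting via the orthogonal projection: for any $u_h\in\Vspace_h$ we have $u_h=P_hu_h$, so
\[
\OpS u_h = P_h\OpS u_h + (\Id-P_h)\OpS P_h u_h,
\]
where the first summand lies in $\Vspace_h$ and the second is controlled by the assumed quantity $\|(\Id-P_h)\OpS P_h\|_{L(\Vspace)}$. This is the one nontrivial idea; the rest is bookkeeping.

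First I would fix $v_h\in\Vspace_h\setminus\{0\}$ and write
\[
|\stot(\OpS u_h,v_h)| \le |\stot(P_h\OpS u_h,v_h)| + |\stot((\Id-P_h)\OpS P_h u_h,v_h)|.
\]
Setting $A(v_h):=\sup_{w_h\in\Vspace_h\setminus\{0\}}|\stot(w_h,v_h)|/\|w_h\|_\Vspace$, the first term is bounded by $\|\OpS\|\|u_h\|_\Vspace A(v_h)$ since $P_h\OpS u_h\in\Vspace_h$ with $\|P_h\OpS u_h\|_\Vspace\le\|\OpS\|\|u_h\|_\Vspace$. The second term is bounded by $\|\stot\|\,\|(\Id-P_h)\OpS P_h\|_{L(\Vspace)}\|u_h\|_\Vspace\|v_h\|_\Vspace$ by the boundedness of $\stot$ and the operator norm of $(\Id-P_h)\OpS P_h$. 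Dividing by $\|u_h\|_\Vspace\|v_h\|_\Vspace$, taking the supremum over $u_h\in\Vspace_h\setminus\{0\}$, and rearranging gives
\[
\frac{A(v_h)}{\|v_h\|_\Vspace} \ge \frac{1}{\|\OpS\|}\!\left(\sup_{u_h\in\Vspace_h\setminus\{0\}}\frac{|\stot(\OpS u_h,v_h)|}{\|u_h\|_\Vspace\|v_h\|_\Vspace} - \|\stot\|\,\|(\Id-P_h)\OpS P_h\|_{L(\Vspace)}\right).
\]
Taking the infimum over $v_h\in\Vspace_h\setminus\{0\}$ yields the stated inequality.

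For the second assertion, $\OpS$-coercivity and the test $u_h=v_h$ give
\[
\sup_{u_h\in\Vspace_h\setminus\{0\}}\frac{|\stot(\OpS u_h,v_h)|}{\|u_h\|_\Vspace} \ge \frac{|\stot(\OpS v_h,v_h)|}{\|v_h\|_\Vspace} \ge \tilde\alpha\|v_h\|_\Vspace,
\]
so the inner double inf-sup in the main inequality is bounded below by $\tilde\alpha$ uniformly in $h$. The condition $\|(\Id-P_h)\OpS P_h\|_{L(\Vspace)}\to 0$ ensures that for all sufficiently small $h$ the subtracted term is at most $\tilde\alpha/2$, and we obtain discrete inf-sup stability with constant $\tilde\alpha/(2\|\OpS\|)$ independent of $h$.

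I do not expect any serious obstacle: the one conceptual step is recognizing that one must project $\OpS u_h$ back into $\Vspace_h$ and then quantify the induced error by $\|(\Id-P_h)\OpS P_h\|_{L(\Vspace)}$; from there the estimate is immediate. The crucial feature this lemma is set up to exploit later is that, even though $\OpS$ does not preserve $\Vspace_{h,N}$, it suffices to control the commutator-type quantity $(\Id-P_h)\OpS P_h$, which can reasonably be arranged for the concrete $\OpS$ constructed from $\OpS^{\mathrm{ext}}$ and $\tr_-^\dagger$.
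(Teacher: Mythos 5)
Your proposal is correct and follows essentially the same route as the paper: the paper restricts the supremum to test functions of the form $P_h\OpS u_h$ and uses the identity $\stot(P_h\OpS u_h,v_h)=\stot(\OpS u_h,v_h)-\stot((\Id-P_h)\OpS P_h u_h,v_h)$ together with $\|P_h\OpS u_h\|\leq\|\OpS\|\|u_h\|$, which is exactly your projection splitting written as a choice of candidate in the supremum rather than as a triangle-inequality rearrangement. Your explicit treatment of the second assertion (testing with $u_h=v_h$ and absorbing the perturbation once it drops below $\tilde\alpha/2$) is also the intended, if unwritten, argument in the paper.
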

\begin{proof}
For all $v_h \in \Vspace_h$ we have
\begin{equation*}
 \begin{aligned}
   \sup_{\substack{u_h \in \Vspace_h\\ u_h\neq 0}} \frac{\left|\stot(u_h,v_h)\right|}{ \|u_h\|_\Vspace}
& \geq  \sup_{\substack{u_h \in \Vspace_h\\ P_h S u_h\neq 0}}  \frac{\left|\stot(\OpS u_h,v_h) - \stot((P_h-I)\OpS P_h u_h, v_h)\right|}{\|P_h S u_h \|_\Vspace} \\
&\geq \frac{1}{\|\OpS\|}\paren{\sup_{\substack{u_h \in \Vspace_h\\ u_h\neq 0}} \frac{\left|\stot(\OpS u_h,v_h)\right|}{\|u_h\|_\Vspace} - \|\stot\|\,\|(P_h-\Id)\OpS P_h \|_{L(\Vspace)} \|v_h\|_{\Vspace}} . 
 \end{aligned}
\end{equation*}
The proposition follows by dividing this inequality by $\|v_h\|_{\Vspace}$ and 
taking the infimum over all $v_h \in \Vspace_h$.
\end{proof}

Now let us verify condition \eqref{eq:cond_discrete_infsup_stability}:
\begin{lemm}\label{lemm:Sstab_conv}
Suppose Assumptions~\hyperlink{AssA}{A}-\hyperlink{AssD}{D} hold true, and let $P_{h,N}: \Vspace\to \Vspace_{h,N}$ denote 
the orthogonal projections. Then 
\[
\lim_{h\to 0}\sup_{N\in\setN} \|(\Id-P_{h,N})\OpS P_{h,N}\|_{L(\Vspace)} = 0.
\]
\end{lemm}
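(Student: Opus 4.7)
The plan is to exploit the essentially finite-rank structure of $\OpS-\Id$: by \eqref{def:OpSext} one has $(\OpS^{\rm ext}-\Id)\vext=\sum_{n=1}^{\nrGM}(\theta_n-1)\,\vext_n\otimes\varphi_n$, and the interior correction $\tr_-^{\dagger}(\OpS^{\tracespace}-\Id)\tr_-$ factors through the finite-dimensional subspace $F:=\sum_{n=1}^{\nrGM}\tr_+(\Vspace_n)$ of $\tracespace$, which is finite dimensional because $\dim\tr_+(\Vspace_n)<\infty$ by Assumption~\hyperlink{AssB}{B}. Since $P_{h,N}$ yields the best approximation in $\Vspace_{h,N}$, it suffices to produce, for each $v\in\Vspace_{h,N}$, an element $w_h\in\Vspace_{h,N}$ with $\|\OpS v-w_h\|_\Vspace\leq\delta(h)\|v\|_\Vspace$, where $\delta(h)\to 0$ independently of $N$.

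Given $v=\svec{\vint\\ \vext}\in\Vspace_{h,N}$, the tensor-product form of $\Vext_{h,N}=\bigoplus_l\Xspace_{N,l}\otimes\Yspace_{h,l}$ implies that the mode coefficients $\vext_n$ in the expansion \eqref{eq:Uexpansion} automatically lie in $\Xspace_{N,l(n)}$. Using density of $\bigcup_h\Yspace_{h,l}$ in $\Yspace_l^2$ from Assumption~\hyperlink{AssD}{D}, I would select $\varphi_n^h\in\Yspace_{h,l(n)}$ with $\|\varphi_n-\varphi_n^h\|_{\Yspace_{l(n)}^2}\to 0$ for $n=1,\dots,\nrGM$, and set
\[
w_h^{\rm ext}:=\vext+\sum_{n=1}^{\nrGM}(\theta_n-1)\,\vext_n\otimes\varphi_n^h\in\Vext_{h,N},\qquad
w_h^{\rm int}:=\vint+\tr_{h,-}^{\dagger}\bigl(\tr_+w_h^{\rm ext}-\tr_-\vint\bigr).
\]
The argument of $\tr_{h,-}^{\dagger}$ lies in $\tr_+(\Vext_{h,N})=\tr_-(\Vint_h)$ by \eqref{eqs:cond_discrete}, so the defining property of $\tr_{h,-}^{\dagger}$ in Assumption~\hyperlink{AssD}{D} yields $\tr_-w_h^{\rm int}=\tr_+w_h^{\rm ext}$, placing $w_h$ in $\Vspace_{h,N}$.

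The exterior defect $(\OpS v)^{\rm ext}-w_h^{\rm ext}=\sum_{n=1}^{\nrGM}(\theta_n-1)\vext_n\otimes(\varphi_n-\varphi_n^h)$ is estimated directly from the cross-norm \eqref{eq:defi_Vext} and the continuous embedding $\Yspace^2_l\hookrightarrow\Yspace^1_l$, combined with the elementary bound $\|\vext_n\|_{\Xspace^1_{l(n)}\cap\Xspace^2_{l(n)}}\leq C_n\|\vext_n\otimes\varphi_n\|_\Vext\leq C_n\|\vext\|_\Vext$ (valid for each fixed $n\leq\nrGM$), giving a bound $C\sum_{n=1}^{\nrGM}\|\varphi_n-\varphi_n^h\|_{\Yspace^2_{l(n)}}\|\vext\|_\Vext$. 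For the interior defect, writing $g_n:=\tr_+(\vext_n\otimes\varphi_n)$ and $g_n^h:=\tr_+(\vext_n\otimes\varphi_n^h)$, one decomposes
\[
(\OpS v)^{\rm int}-w_h^{\rm int}=\sum_{n=1}^{\nrGM}(\theta_n-1)\Bigl[(\tr_-^{\dagger}-\tr_{h,-}^{\dagger})g_n+\tr_{h,-}^{\dagger}(g_n-g_n^h)\Bigr].
\]
The second summand is controlled via $\|g_n-g_n^h\|_\tracespace\leq\|\tr_+\|\,\|\vext_n\otimes(\varphi_n-\varphi_n^h)\|_\Vext$ together with uniform boundedness of $\{\tr_{h,-}^{\dagger}\}_h$, which follows from Banach--Steinhaus applied to \eqref{eq:trace_inverse_conv}.

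The main obstacle is the first summand: the convergence \eqref{eq:trace_inverse_conv} is only pointwise, whereas $g_n$ depends on $v\in\Vspace_{h,N}$ and therefore varies with $h,N$. The crucial resolution is that $g_n\in\tr_+(\Vspace_n)\subset F$ always lives in the \emph{fixed} finite-dimensional subspace $F$. On a finite-dimensional subspace pointwise convergence upgrades automatically to uniform operator-norm convergence, so $\|(\tr_-^{\dagger}-\tr_{h,-}^{\dagger})|_F\|_{L(F,\Vint)}\to 0$, yielding a bound $\epsilon(h)\|g_n\|_\tracespace\leq C\epsilon(h)\|\vext\|_\Vext$ that is uniform in $v$. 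Summing all finitely many contributions produces $\|\OpS v-w_h\|_\Vspace\leq\delta(h)\|v\|_\Vspace$ with $\delta(h)\to 0$ independently of $N$, and the lemma follows from $\|(\Id-P_{h,N})\OpS v\|_\Vspace\leq\|\OpS v-w_h\|_\Vspace$ combined with $\|P_{h,N}\|\leq 1$.
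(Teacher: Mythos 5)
Your proposal is correct and takes essentially the same route as the paper: the paper's proof constructs precisely your $w_h$ as an operator $\OpS_h$ on $\Vspace_{h,N}$ (choosing $\varphi_n^h=P^{\Yspace}_h\varphi_n$, the orthogonal projection onto $\Yspace_{h,l(n)}$), bounds the exterior defect through the cross-norm of \eqref{eq:defi_Vext} and the Parseval identity \eqref{eq:Uexpansion}, and handles the interior defect by your exact splitting, upgrading the pointwise convergence \eqref{eq:trace_inverse_conv} to norm convergence on the finite-dimensional range of $\tr_+(\OpS^{\rm ext}-\Id)$ and invoking the uniform boundedness principle for $\sup_h\|\tr_{h,-}^{\dagger}\|$. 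The only differences are cosmetic (per-mode constants $C_n$ instead of the paper's relative-error constant $C_{\nrGM,h}$, and your explicit check that $w_h\in\Vspace_{h,N}$, which the paper leaves implicit).
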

\begin{proof}
Recall, that the discrete spaces $\Vspace_{h,N}$ and the corresponding quantities like the discrete trace operators $\tr_{h,-}^\dagger:\tracespace \to \Vint_h$
as well as the $\Vext_{h,N}=\bigoplus_{l=1}^L\Xspace_{N,l}\otimes \Yspace_{h,l}$ with $\Xspace_{N,l}\subset \Xspace_l^2$
and $\Yspace_{h,l}\subset \Yspace_l^2$ were introduced in Ass.~\hyperlink{AssD}{D}.

Since $P_{h,N}:\Vspace\to \Vspace_{h,N}$ denotes the orthogonal projection, it holds
\begin{equation}\label{eq:var_estim_proj}
\|(\Id-P_{h,N})\OpS P_{h,N}\|_{L(\Vspace)} 
= \sup_{\substack{u\in \Vspace_{h,N}\\ u\neq 0}} 
\frac{\|(\Id-P_{h,N})Su\|_{\Vspace}}{\|u\|_{\Vspace}}
= \sup_{\substack{u\in \Vspace_{h,N}\\ u\neq 0}} \inf_{v\in \Vspace_{h,N}}
\frac{\|Su-v\|_{\Vspace}}{\|u\|_{\Vspace}}.
\end{equation}
To estimate the right hand side of this equation choose $u =\svec{\uint\\ \uext}\in \Vspace_{h,N}$. 
In the unique expansion $\uext=\sum_{n\in\setN}u_n\otimes \varphi_n$ 
(see \eqref{eq:Uexpansion}) all $u_n$ belong to $\Xspace_N$. 
By definition of $\OpS$ in \eqref{def:OpSext} we have
\begin{equation*}
 \OpS^{\rm ext} \uext = \uext + \sum_{n=1}^\nrGM (\theta_n -1 ) u_n\otimes \varphi_n.
\end{equation*}
We set $\OpS^{\rm ext}_h \uext:= \uext+ \sum_{n=1}^\nrGM (\theta_n -1 ) u_n\otimes P^{\Yspace}_h\varphi_n$
with the orthogonal projection $P^{\Yspace}_h:\bigoplus_l\Yspace^2_l \to \bigoplus_l\Yspace_{h,l}$. 
As $\OpS^{\rm ext}_h \uext\in \Vext_{h,N}$, we can set $\vext:=\OpS^{\rm ext}_h\uext$ later. 

Due to Ass.~\hyperlink{AssB}{B} there exists for $n\in \setN$ a $l(n)\in \{1,\dots,L\}$ such that for $\varphi_n=(\varphi_n^{(1)},\dots,\varphi_n^{(L)})$
it holds $\varphi_n^{(j)}= 0$ for $j\neq l(n)$. Using the definition of $\|\cdot\|_{\Vext}$ in \eqref{eq:defi_Vext} we have 
\begin{align*}
&\|\OpS^{\rm ext} \uext-\OpS^{\rm ext}_h\uext\|_{\Vext}
= \norm{\sum_{n=1}^\nrGM  (\theta_n -1 ) u_n \otimes (\varphi_n-P^{\Yspace}_h\varphi_n)}_{\Vext}\\
&\leq 2\sum_{n=1}^\nrGM \norm{u_n\otimes (\varphi_n-P^{\Yspace}_h\varphi_n)}_{\Vext}\\
&= 2\sum_{n=1}^\nrGM \!\!
\paren{\!\|u_n\|_{\Xspace^2_{l(n)}}^2 \!\!\|\varphi_n^{(l(n))}\!\!\!-\!P^{\Yspace_{l(n)}^2}_h\varphi_n^{(l(n))}\|_{\Yspace^1_{l(n)}}^2  \!\!\!\!
+ \|u_n\|_{\Xspace^1_{l(n)}}^2 \!\! \|\varphi_n^{(l(n))}\!\!\!-\!P^{\Yspace_{l(n)}^2}_h\varphi_n^{(l(n))}\|_{\Yspace^2_{l(n)}}^2\!}^{1/2}\\
&\leq 2C_{\nrGM,h}
\sum_{n=1}^\nrGM\paren{ \|u_n\|_{\Xspace^2_{l(n)}}^2 \|\varphi_n^{(l(n))}\|_{\Yspace^1_{l(n)}}^2  
+ \|u_n\|_{\Xspace^1_{l(n)}}^2 \|\varphi_n^{(l(n))}\|_{\Yspace^2_{l(n)}}^2 }^{1/2} \\
&=2C_{\nrGM,h} \sum_{n=1}^\nrGM \|u_n\otimes \varphi_n  \|_{\Vext}%\\
%&\quad
\leq 2C_{\nrGM,h} \sqrt{M}\paren{\sum_{n=1}^\nrGM \|u_n\otimes \varphi_n  \|_{\Vext}^2}^{1/2}\leq 2C_{\nrGM,h} \sqrt{M}\|u\|_{\Vext}
\end{align*}
with
\begin{equation*}
 C_{\nrGM,h}:=\max_{n=1\dots\nrGM}\max \left\{
\frac{\|\varphi_n^{(l(n))}-P^{\Yspace_{l(n)}^2}_h\varphi_n^{(l(n))}\|_{\Yspace^1_{l(n)}}}{\|\varphi_n^{(l(n))}\|_{\Yspace^1_{l(n)}}} ,  
\frac{\|\varphi_n^{(l(n))}-P^{\Yspace_{l(n)}^2}_h\varphi_n^{(l(n))}\|_{\Yspace^2_{l(n)}}}{\|\varphi_n^{(l(n))}\|_{\Yspace^2_{l(n)}}} \right\} .
\end{equation*}
Due to the density $\bigcup_h\Yspace_{h,l}\subset\Yspace^2_l$, the finiteness of 
$\nrGM$ and the continuity of the embeddings $\Yspace^2_l\hookrightarrow \Yspace^1_l$, we have
$\lim_{h\to 0} C_{\nrGM,h} =0$, i.e.\
\begin{equation}\label{eq:Sext_conv}
\lim_{h\to 0}\sup_{N\in\setN}\|\OpS^{\rm ext} -\OpS^{\rm ext}_h\|_{L(\Vext_{h,N},\Vext)} = 0.
\end{equation}
We define $\OpS_{h}:\Vspace_{h,N}\to \Vspace_{h,N}$ by 
\[
\OpS_{h}\begin{pmatrix}\uint \\ \uext\end{pmatrix}
:=\begin{pmatrix} \uint + \tr_{h,-}^{\dagger}(\tr_+\OpS^{\rm ext}_h\uext-\tr_-\uint) \\
\OpS^{\rm ext}_h\uext\end{pmatrix}.
\]
Then using $\tr_-\uint = \tr_+\uext$ we have
\begin{align*}
\left[(\OpS-\OpS_{h})\smat{\uint\\ \uext}\right]^{\rm int}
&= \paren{\tr_{-}^{\dagger}\tr_+\OpS^{\rm ext} - \tr_{h,-}^{\dagger}\tr_+\OpS^{\rm ext}_h}\uext
 + \paren{\tr_{h,-}^{\dagger}-\tr_-^{\dagger}}\tr_-\uint\\
&= \paren{\tr_-^{\dagger}-\tr_{h,-}^{\dagger}}\tr_+(\OpS^{\rm ext}-\Id)\uext
+ \tr_{h,-}^{\dagger}\tr_+(\OpS^{\rm ext}-\OpS^{\rm ext}_h)\uext
\end{align*}
Since the range of $\tr_+(\OpS^{\rm ext}-\Id)$ is finite dimensional and $\tr_-^{\dagger}-\tr_{h,-}^{\dagger}$ 
converges point wise to $0$ \eqref{eq:trace_inverse_conv}, we have 
$\lim_{h\to 0}\|(\tr_-^{\dagger}-\tr_{h,-}^{\dagger})\tr_+(\OpS^{\rm ext}-\Id)\|_{L(\Vext,\Vint)}= 0$. 
Moreover, by the uniform boundedness principle $\sup_{h>0}\|\tr_{h,-}^{\dagger}\|_{L(\tracespace,\Vint)}<\infty$. 
Together with \eqref{eq:Sext_conv} this implies 
\begin{equation}\label{eq:Sint_conv}
\lim_{h\to 0}\sup_{N\in\setN}\norm{\left[\OpS-\OpS_{h}\right]^{\rm int}}_{L(\Vspace_{h,N},\Vint)} = 0.
\end{equation} 
Setting $v:=S_hu$ in \eqref{eq:var_estim_proj} and combining \eqref{eq:Sext_conv} and \eqref{eq:Sint_conv} we obtain
\[
\sup_{N\in\setN}\|(\Id-P_{h,N})\OpS P_{h,N}\|_{L(\Vspace)} 
\leq \sup_{N\in\setN}\|\OpS-\OpS_h\|_{L(\Vspace_{h,N},\Vspace)}
\stackrel{h\to 0}{\longrightarrow} 0.
\]
\end{proof}

\begin{lemm}\label{lemm:density}
Under Assumptions~\hyperlink{AssA}{A}-\hyperlink{AssD}{D} $\bigcup_{h>0,N\in\setN}\Vspace_{h,N}\subset\Vspace$ is dense. 
\end{lemm}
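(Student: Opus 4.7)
The plan is to take an arbitrary $u=\svec{\uint\\ \uext}\in\Vspace$ and construct an approximating sequence in $\Vspace_{h,N}$ by separately approximating the two components and then correcting the interior part so that the trace-matching constraint is restored.

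First I would approximate $\uext$ in $\Vext$ by some $\uext_{h,N}\in \Vext_{h,N}=\bigoplus_l \Xspace_{N,l}\otimes \Yspace_{h,l}$. This requires showing that $\bigcup_{h>0,N\in\setN}\Vext_{h,N}$ is dense in $\Vext$. Working componentwise, for each $l$ it suffices to show density of the algebraic tensor products $\Xspace_{N,l}\otimes \Yspace_{h,l}$ in $\Vext_l=\Xspace^2_l\otimes \Yspace^1_l\cap \Xspace^1_l\otimes \Yspace^2_l$ equipped with the intersection norm. A standard approximation argument reduces this to density in each of the two tensor-product Hilbert spaces separately, which follows from the hypothesized density of $\bigcup_N\Xspace_{N,l}\subset\Xspace^2_l\hookrightarrow\Xspace^1_l$ and $\bigcup_h\Yspace_{h,l}\subset\Yspace^2_l\hookrightarrow\Yspace^1_l$ together with continuity of the embeddings. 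Independently, by Assumption~\hyperlink{AssD}{D} the density $\bigcup_h\Vint_h\subset \Vint$ gives some $v_h\in\Vint_h$ with $v_h\to\uint$ in $\Vint$.

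The pair $(v_h,\uext_{h,N})$ generally fails to lie in $\Vspace_{h,N}$ because $\tr_- v_h$ and $\tr_+ \uext_{h,N}$ need not agree. Here is where I would invoke the discrete right inverse $\tr_{h,-}^{\dagger}$. I would set
\[
\uint_{h,N}:=v_h+\tr_{h,-}^{\dagger}\bigl(\tr_+\uext_{h,N}-\tr_- v_h\bigr)\in\Vint_h
\]
and claim $u_{h,N}:=\svec{\uint_{h,N}\\ \uext_{h,N}}\in\Vspace_{h,N}$. Indeed, by \eqref{eqs:cond_discrete} we have $\tr_+\uext_{h,N}\in\tr_+(\Vext_{h,N})=\tr_-(\Vint_h)$ and $\tr_- v_h\in\tr_-(\Vint_h)$, so their difference is of the form $\tr_- w_h$ for some $w_h\in\Vint_h$. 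The property $\tr_-\tr_{h,-}^{\dagger}\tr_- w_h=\tr_- w_h$ from Assumption~\hyperlink{AssD}{D} then gives $\tr_-\uint_{h,N}=\tr_- v_h+(\tr_+\uext_{h,N}-\tr_- v_h)=\tr_+\uext_{h,N}$, as required.

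It remains to verify $u_{h,N}\to u$ in $\Vspace$. The exterior component converges by construction, while for the interior component I would estimate
\[
\|\uint_{h,N}-\uint\|_{\Vint}\leq \|v_h-\uint\|_{\Vint}+\|\tr_{h,-}^{\dagger}\|_{L(\tracespace,\Vint)}\,\|\tr_+\uext_{h,N}-\tr_- v_h\|_{\tracespace}.
\]
The first term tends to $0$ by choice of $v_h$. In the second term, the operator norms $\|\tr_{h,-}^{\dagger}\|$ are uniformly bounded by the Banach--Steinhaus theorem applied to the pointwise convergence \eqref{eq:trace_inverse_conv}, and the trace difference tends to zero since both $\tr_+\uext_{h,N}\to\tr_+\uext$ and $\tr_- v_h\to\tr_-\uint$ in $\tracespace$ by continuity of the traces and since $\tr_+\uext=\tr_-\uint$. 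The main subtlety of the proof is the simultaneous handling of the two refinement parameters $h$ and $N$ together with the trace-compatibility constraint; the correction through $\tr_{h,-}^{\dagger}$ circumvents this, and the only nontrivial density statement is the one for the intersection space $\Vext_l$.
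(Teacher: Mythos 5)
Your proof is correct in substance but follows a genuinely different route from the paper's. The paper argues by duality: it takes $w\in\Vspace$ orthogonal to all spaces $\Vspace_{h,N}$, tests against the special discrete elements $\svec{\tr_{h,-}^{\dagger}\tr_+(v_N\otimes\psi_h)\\ v_N\otimes\psi_h}$, passes to the limit $h\to 0$ using the pointwise convergence \eqref{eq:trace_inverse_conv}, and concludes $w=0$ without ever constructing an approximating sequence. You instead build the approximant explicitly and repair the trace mismatch through $\uint_{h,N}:=v_h+\tr_{h,-}^{\dagger}\bigl(\tr_+\uext_{h,N}-\tr_- v_h\bigr)$. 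That correction step is sound: by \eqref{eqs:cond_discrete} the argument of $\tr_{h,-}^{\dagger}$ lies in $\tr_-(\Vint_h)$, so the identity $\tr_-\tr_{h,-}^{\dagger}\tr_-\uint_h=\tr_-\uint_h$ from Assumption~\hyperlink{AssD}{D} applies, and the uniform bound $\sup_{h}\|\tr_{h,-}^{\dagger}\|_{L(\tracespace,\Vint)}<\infty$ via Banach--Steinhaus is exactly the device the paper itself uses inside the proof of Lemma~\ref{lemm:Sstab_conv}. Your constructive version buys an explicit approximating sequence (so error estimates for the components propagate to the pair), at the cost of more bookkeeping; the paper's orthogonal-complement argument is shorter because nothing quantitative needs to be tracked.

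One step, however, is wrong as stated and must be repaired: the claim that density of $\bigcup_{h,N}\Xspace_{N,l}\otimes\Yspace_{h,l}$ in $\Vext_l=\Xspace^2_l\otimes\Yspace^1_l\cap\Xspace^1_l\otimes\Yspace^2_l$ ``reduces to density in each of the two tensor-product Hilbert spaces separately.'' Density in each tensor norm separately only produces two \emph{different} approximating sequences, whereas the intersection norm \eqref{eq:defi_Vext} requires a single sequence converging in both tensor norms simultaneously; in general no such reduction is valid. The correct repair uses Assumption~\hyperlink{AssB}{B} rather than abstract tensor arguments: by \eqref{eq:Uexpansion} the finite sums $\sum_{n\leq n_0}u_n\otimes\varphi_n$ are dense in $\Vext$, and each factor satisfies $u_n\in\Xspace^2_{l(n)}$ (since $\|u_n\otimes\varphi_n\|_{\Vext}\geq\|u_n\|_{\Xspace^2_{l(n)}}\|\varphi_n\|_{\Yspace^1_{l(n)}}$) and $\varphi_n\in\Yspace^2_{l(n)}$. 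Choosing $v_N\to u_n$ in $\Xspace^2_{l(n)}$ and $\psi_h\to\varphi_n$ in $\Yspace^2_{l(n)}$, which the density hypotheses of Assumption~\hyperlink{AssD}{D} permit, the splitting
\[
u_n\otimes\varphi_n-v_N\otimes\psi_h=(u_n-v_N)\otimes\varphi_n+v_N\otimes(\varphi_n-\psi_h)
\]
tends to zero in $\Vext$, because approximating the factors in the \emph{stronger} norms $\Xspace^2_{l(n)}$, $\Yspace^2_{l(n)}$ controls both summands of \eqref{eq:defi_Vext} at once via the continuous embeddings $\Xspace^2_l\hookrightarrow\Xspace^1_l$ and $\Yspace^2_l\hookrightarrow\Yspace^1_l$. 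With this substitution your proof is complete; note that the paper's own proof implicitly invokes the same elementary-tensor density when it extends \eqref{eq:aux_density} from tensors $v\otimes\psi$ to all of $\Vext$.
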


\begin{proof}
Assume that $\lsp w,u_{h,N}\rsp=0$ for all $u\in \Vspace_{h,N}$ and all $h,N$ 
for some $w\in\Vspace$. In particular
\[
0 = \lsp w^{\rm int},\tr_{h,-}^{\dagger}\tr_+(v_N\otimes \psi_h)\rsp_{\Vint}
+ \lsp w^{\rm ext}, v_N\otimes \psi_h\rsp_{\Vext}
\]
for all $v_N\in \Xspace_N$ and $\psi_h\in \Yspace_h$. Due to the form of the inner product 
of $\Vext$, the assumptions on $\Xspace_N$ and $\Yspace_h$ and the point wise convergence of 
$\tr_{h,-}^{\dagger}$, we have 
\begin{equation}\label{eq:aux_density}
0= \lsp w^{\rm int},\tr_{-}^{\dagger}\tr_+(\uext)\rsp_{\Vint}
+ \lsp w^{\rm ext}, \uext\rsp_{\Vext}
\end{equation}
first for all $\uext$ of the form $\uext=v\otimes\psi$ with $v\in \Xspace^2$ and 
$\psi\in\Yspace^2$ and then by density of $\Xspace^2\subset\Xspace^1$ and 
$\Yspace^2\subset\Yspace^1$ for all $\uext\in\Vext$.

For a given $u:=(\uint, \uext)^\top \in \Vspace$ we obtain with \eqref{eq:aux_density}, $\tr_-\uint=\tr_+\uext$
and the density assumption on $\Vint_h$
$$\lsp w,u\rsp_\Vspace=\lsp w^{\rm int} , \uint -\tr_-^\dagger \tr_- \uint\rsp_{\Vint}=0.$$
This shows that $w=0$. Hence the orthogonal complement of $\bigcup_{h,N}\Vspace_{h,N}$ 
is $\{0\}$, i.e.\ this space is dense in $\Vspace$. 
\end{proof}

We can now complete the proof of the second part of Theorem \ref{theo:ConvTheo} as follows:
Due to \eqref{eq:Gaarding_abstr} and Lemmas~\ref{lemm:Sh_lower_bound} and \ref{lemm:Sstab_conv} the discrete inf-sup constants of the variational problems 
\begin{equation}\label{eq:aux_problems}
\stot\paren{u_{h,N},v_{h,N}} + 
 \lsp (K-\tilde{K})\uint_h,\vint_h\rsp_{\Vint} = F(v_{h,N}),
\qquad v_{h,N}=\svec{\vint_h\\ \vext_{h,N}}\in \Vspace_{h,N}
\end{equation}
are uniformly bounded away from $0$ for $h\leq h_0$. Therefore, these variational 
equations have unique solutions $u_{h,N}\in \Vspace_{h,N}$ for all $h\leq h_0$, 
and together with the density lemma~\ref{lemm:density} it follows that the 
Galerkin method \eqref{eq:aux_problems} converges,  
and the error bound \eqref{eq:error_bound} holds true for this  
modified problem (see e.g.\  \cite[Theorems 13.6]{Kress}). 
Since $K-\tilde{K}$ is compact the Galerkin method \eqref{eq:abstract_discr_prbl} 
for the original problem \eqref{eq:abstract_cont_prbl} converges as well with error bound
\eqref{eq:error_bound} (see e.g.\ \cite[Theorems 13.6 and 13.7]{Kress}).

\subsection{Proof of Theorem \ref{Theo:ConvEWP}}\label{Sec:Res}
For the following we need in addition to Ass.~\hyperlink{AssA}{A}-\hyperlink{AssD}{D} the Ass.~\hyperlink{AssE}{E} for the eigenvalue setting. Recall, that $\SpLam$ denotes the set of eigenvalues $\kappa$ of
$\stot_\kappa(u,v)=0$, $v \in  \Vspace$, with eigenfunction $u\in \Vspace\setminus\{0\}$. 
Moreover, if there exists a $\kappa\in \Lambda$ such that the operator $T_\mupar:\Vspace \to \Vspace$ defined 
by $\stot_\mupar (u,v)=\lsp T_\mupar u,v\rsp_\Vspace$, $u,v \in \Vspace$, is invertible, than $\SpLam$ is discrete without accumulation points by analytic
Fredholm theory (see e.g. \cite[Part III, Cor.~XI.8.4]{Gohbergetal:90}). Note, that we have shown in Sec.~\ref{sec:proofConvTheoPart1}, that $T_\mupar$ is 
a Fredholm operator for all $\mupar \in \Lambda$.

As opposed to some other eigenvalue convergence results 
(see e.g.\ \cite[Chapter 11]{Hackbusch}) 
some complications arise since we do not have a compact embedding assumption 
in the exterior domain. (Recall that e.g.\ $H^1(\Oe)\hookrightarrow L^2(\Oe)$ 
is not compact due to the unboundedness of $\Oe$.) We could use as in \cite[Sec.~4]{SteinbachUnger:12} the abstract framework of \cite{Karma:96}.
Nevertheless, in order to be self-consistent we present here the proofs in our framework.

Let us define
\begin{equation*}
 \beta(\mupar):=\inf_{\substack{u \in \Vspace \\ u\neq 0}} \sup_{\substack{v \in \Vspace\\ v\neq 0}} 
\frac{\left|\stot_\mupar(u,v)\right|}{\|u\|_\Vspace \|v\|_\Vspace},\qquad 
 \beta_{h,N}(\mupar):=\inf_{\substack{u \in \Vspace_{h,N} \\ u\neq 0}} \sup_{\substack{v \in \Vspace_{h,N}\\ v\neq 0}} 
\frac{\left|\stot_\mupar(u,v)\right|}{\|u\|_\Vspace \|v\|_\Vspace}. 
\end{equation*} 
As a consequence of Theorem \ref{theo:ConvTheo} the operators 
$T_{\mupar}$ have a bounded inverse for all $\mupar\in\Lambda\setminus\SpLam$, 
and by a Neumann series argument the mapping $\mupar\mapsto T_{\mupar}^{-1}$ is 
holomorphic on $\Lambda\setminus\SpLam$. 
As $\beta(\mupar) = \|T_{\mupar}^{-1}\|^{-1} $ and 
$\|\stot_{\mupar}\|=\|T_{\mupar}\|_{L(\Vspace)}$, we have
\begin{align}\label{eq:defiC}
&\beta \mbox{ is continuous on } \Lambda\setminus \SpLam\\
&C_{\Lambda'}:=\sup\{\|\stot_{\mupar}\|~|~\mupar\in\Lambda'\} \mbox{ is finite}
\end{align}
for all compact $\Lambda'\subset\Lambda$. 

\begin{lemm}\label{lemm:beta_ieq}
Under the assumptions of Theorem \ref{Theo:ConvEWP} suppose that 
$\inf\{\beta(\mupar):\mupar\in\widehat\Lambda\}>0$ for some 
compact subset $\widehat\Lambda\subset\Lambda$ as in Assumption~\hyperlink{AssE}{E}. 
Then there exist constants $\rho, h_0,N_0>0$ such that
\[
\beta_{h,N}(\mupar)\geq \rho\qquad \mbox{for all } h\leq h_0, N\geq N_0,
\mupar\in\widehat\Lambda.
\]
\end{lemm}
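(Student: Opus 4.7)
The plan is to combine the $\OpS$-coercivity framework that already underlies the proof of Theorem~\ref{theo:ConvTheo} with a weak-compactness argument that transfers the discrete stability of a compactly perturbed sesquilinear form to $\stot_\mupar$ itself, uniformly in $\mupar\in\widehat\Lambda$.

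First I would invoke Assumption~\hyperlink{AssE}{E}: on $\widehat\Lambda$ the quantities $\varphi_n$, $\Cs$, $\alpha$, $\theta_n$, $\nrGM$ and $K$ may be chosen independently of $\mupar$, so the operators $\OpS^{\rm ext}$, $\OpS^{\tracespace}$, $\OpS$ and the compact perturbation $\tilde K := \tr_{-}^{\dagger}(\OpS^{\tracespace}-\Id)\tr_{-}$ from Section~\ref{sec:proofConvTheoPart1} do not depend on $\mupar\in\widehat\Lambda$, and the G\aa rding-type inequality \eqref{eq:Gaarding_abstr} holds with a common constant $\alpha>0$ for every such $\mupar$. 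Moreover, since $\mupar\mapsto T_\mupar$ is continuous (indeed holomorphic) and $\widehat\Lambda$ is compact, $\|\stot_\mupar\|$ is bounded on $\widehat\Lambda$.

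Applying Lemma~\ref{lemm:Sh_lower_bound} to the compactly perturbed form
\[
\tilde\stot_\mupar(u,v) := \stot_\mupar(u,v) + \lsp(K-\tilde K)\uint,\vint\rsp_{\Vint},
\]
which is $\OpS$-coercive with the uniform constant $\alpha$, and using Lemma~\ref{lemm:Sstab_conv} (whose conclusion is $\mupar$-independent because $\OpS$ is), I obtain a uniform bound
\[
\inf_{u\in\Vspace_{h,N}}\sup_{v\in\Vspace_{h,N}}\frac{|\tilde\stot_\mupar(u,v)|}{\|u\|_\Vspace\|v\|_\Vspace}\geq \rho_0>0
\]
for all $h\leq h_0$, $N\in\setN$ and $\mupar\in\widehat\Lambda$.

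The remaining step, which I expect to be the main obstacle, is to remove the compact perturbation while preserving uniformity in $\mupar$. I would argue by contradiction: if the conclusion fails, there exist sequences $h_k\to 0$, $N_k\to\infty$, $\mupar_k\in\widehat\Lambda$ and $u_k\in\Vspace_{h_k,N_k}$ with $\|u_k\|_\Vspace=1$ and $\eta_k:=\sup_{v\in\Vspace_{h_k,N_k},\,v\neq 0}|\stot_{\mupar_k}(u_k,v)|/\|v\|_\Vspace\to 0$. By compactness of $\widehat\Lambda$ and weak compactness of the unit ball in $\Vspace$, extract subsequences with $\mupar_k\to \mupar_*\in\widehat\Lambda$ and $u_k\rightharpoonup u_*$. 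For any fixed $v\in\bigcup_{h,N}\Vspace_{h,N}$, nestedness gives $v\in\Vspace_{h_k,N_k}$ eventually, while $\lsp T_{\mupar_k}u_k,v\rsp_{\Vspace}\to\lsp T_{\mupar_*}u_*,v\rsp_{\Vspace}$ by norm-continuity of $\mupar\mapsto T_\mupar$ combined with $u_k\rightharpoonup u_*$; hence $\stot_{\mupar_*}(u_*,v)=0$ for all $v$ in this union, and by Lemma~\ref{lemm:density} for all $v\in\Vspace$. The hypothesis $\beta(\mupar_*)>0$ then forces $u_*=0$, so in particular $u_k^{\rm int}\rightharpoonup 0$ in $\Vint$ and $\|(K-\tilde K)u_k^{\rm int}\|_{\Vint}\to 0$ by compactness of $K-\tilde K$. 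Therefore
\[
\sup_{v\in\Vspace_{h_k,N_k}}\frac{|\tilde\stot_{\mupar_k}(u_k,v)|}{\|v\|_\Vspace}\leq \eta_k + \|(K-\tilde K)u_k^{\rm int}\|_{\Vint}\to 0,
\]
contradicting the uniform lower bound $\rho_0$ established above and completing the proof.
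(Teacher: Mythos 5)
Your proposal is correct, and its decisive second step takes a genuinely different route from the paper's. The first half coincides in substance with the paper: you use Assumption~\hyperlink{AssE}{E} to make $\OpS$, $\tilde K$, $\alpha$ and the bound on $\|\stot_\mupar\|$ uniform over $\widehat\Lambda$, and Lemmas~\ref{lemm:Sh_lower_bound} and \ref{lemm:Sstab_conv} to get a $\mupar$-uniform discrete inf-sup constant, for all $h\leq h_0$ and \emph{all} $N$, for a compactly perturbed problem. One small inaccuracy here: the form $\stot_\mupar(u,v)+\lsp (K-\tilde K)\uint,\vint\rsp_{\Vint}$ is not literally $\OpS$-coercive, because $(\OpS u)^{\rm int}=(\Id+\tilde K)\uint$, so testing the $\OpS$-composed form against $u$ produces the extra term $\lsp (K-\tilde K)\tilde K\uint,\uint\rsp_{\Vint}$, which \eqref{eq:Gaarding_abstr} does not control in sign; the immediate fix is to use the perturbation $\lsp (K-\tilde K)(\OpS^{-1}u)^{\rm int},\vint\rsp_{\Vint}$ instead (still compact, so your later argument is untouched), or to work, as the paper's proof of Lemma~\ref{lemm:beta_ieq} does, directly with the genuinely coercive operator $A(\mupar)=\tilde T(\mupar)+\hat K$ attached to the $\OpS$-composed form $\stot_\mupar(\OpS\,\cdot,\cdot)$ --- note the paper commits the same gloss at the end of its proof of Theorem~\ref{theo:ConvTheo}, so this is a shared, easily repaired imprecision rather than a gap. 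Where you genuinely diverge is in removing the compact perturbation. The paper does it quantitatively: it factorizes $\tilde T_{h,N}(\mupar)=A_{h,N}(\mupar)\left(\Id-A_{h,N}(\mupar)^{-1}P_{h,N}\hat K\right)$, upgrades the C\'ea-type pointwise convergence $A_{h,N}(\mupar)^{-1}P_{h,N}f\to A(\mupar)^{-1}f$ (uniform in $\mupar$ by compactness of $\widehat\Lambda$ and continuity of $\mupar\mapsto A(\mupar)^{-1}$) to norm convergence on the relatively compact set $\braces{\hat K f : \|f\|\leq 1}$ via a finite $\epsilon$-net, and closes with a Neumann series bound, essentially \cite[Theorems 13.6 and 13.7]{Kress} with the $\mupar$-dependence tracked. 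You instead argue qualitatively by contradiction: weak compactness of the unit ball and compactness of $\widehat\Lambda$, nestedness of the spaces $\Vspace_{h,N}$ from Assumption~\hyperlink{AssD}{D}, density (Lemma~\ref{lemm:density}), norm continuity of $\mupar\mapsto T_\mupar$, and the injectivity estimate $\|T_{\mupar_*}u_*\|\geq\beta(\mupar_*)\|u_*\|$ force the weak limit to vanish, whereupon compactness of $K-\tilde K$ turns weak convergence into norm convergence and contradicts the perturbed inf-sup bound $\rho_0$; each of these steps is sound as you state it. What each approach buys: yours is shorter and more elementary, and it suffices because the lemma only asserts \emph{existence} of $\rho,h_0,N_0$; its price is that it is non-constructive, yielding no explicit constants and no handle on how $\rho$ degrades, whereas the paper's argument produces computable bounds and is the natural starting point for the convergence rates and eigenvector estimates the authors defer to future work.
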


\begin{proof}
Note that by Assumption~\hyperlink{AssE}{E} the operator $\OpS$ is independent of $\mupar$.
From Lemmas \ref{lemm:Sh_lower_bound} and \ref{lemm:Sstab_conv} we deduce  
that there exists $\eta(h,N)$ independent of $\mupar \in \widehat\Lambda$ 
with $\eta(h,N)\to 0$ for $h \to 0$ and $N \to \infty$ such that
\begin{equation}
 \beta_{h,N}(\mupar)\geq \frac{1}{\|\OpS\|} \inf_{\substack{u \in \Vspace_{h,N} \\ u\neq 0}} \sup_{\substack{v \in \Vspace_{h,N}\\ v\neq 0}} 
\frac{\left|\stot_\mupar(\OpS u,v)\right|}{\|u\|_\Vspace \|v\|_\Vspace} -\eta(h,N).
\end{equation}
Therefore, the proof is done if we can show the assertion for $\tilde \stot_\mupar:=\stot_\mupar(\OpS \bullet,\bullet)$ and
\begin{equation}
  \tilde \beta_{h,N}(\mupar):= \inf_{\substack{u \in \Vspace_{h,N} \\ u\neq 0}} \sup_{\substack{v \in \Vspace_{h,N}\\ v\neq 0}} 
\frac{\left|\tilde \stot_\mupar(u,v)\right|}{\|u\|_\Vspace \|v\|_\Vspace}.
\end{equation}
Equivalently, if we define $\tilde T(\mupar):\Vspace \to \Vspace$ by $\tilde \stot_\mupar(u,v)=\lsp \tilde T(\mupar) u,v\rsp$ for all $u,v \in \Vspace$, and
$\tilde T_{h,N}(\mupar):=P_{h,N}\tilde  T(\mupar):\Vspace_{h,N} \to \Vspace_{h,N}$, 
we have to show due to $\tilde \beta_{h,N}(\mupar) = \|\tilde  T_{h,N}(\mupar)^{-1}\|^{-1}$, 
that there exist $h_0,N_0,\rho>0$ independent of $\mupar\in\widehat\Lambda$ such
that$ \|\tilde  T_{h,N}(\mupar)^{-1}\| \leq 1/\rho$ for all $h\leq h_0$, $N\geq N_0$. 
If $\widehat\Lambda$ is a singleton, the assertion follows from 
\cite[Theorem 13.7(2)]{Kress}. For compact $\widehat\Lambda$ we can argue similarly 
keeping track of dependencies on $\mupar \in \widehat\Lambda$. 

Using $\hat K:=\svec{K-\tilde{K} & 0 \\ 0 &0}$ as in 
Sec.~\ref{sec:proofConvTheoPart1}, $A(\mupar):=\tilde  T(\mupar)+\hat K$ and 
$A_{h,N}(\mupar):=P_{h,N} A(\mupar)$ we can factorize
\begin{equation}\label{eq:factorizationT}
 \tilde  T_{h,N}(\mupar)=P_{h,N} \tilde  T(\mupar) = A_{h,N}(\mupar) \left(\Id - A_{h,N}(\mupar)^{-1} P_{h,N} \hat K \right).
\end{equation}
By Assumptions \hyperlink{AssA}{A}-\hyperlink{AssE}{E} we have
\begin{equation*}
 \begin{aligned}
 &\|A(\mupar)\| \leq C_{\widehat\Lambda}\|\OpS\|   +\|\hat K\|,
 \qquad \|A(\mupar)^{-1}\|\leq \alpha^{-1},\\
 &\|A_{h,N}(\mupar)^{-1}\|\leq \alpha^{-1}, \qquad
 \|\tilde T(\mupar)^{-1}\|=\tilde \beta(\mupar)^{-1} 
\leq \|\OpS^{-1}\|/\rho,\\
 &\|\left(\Id - A(\mupar)^{-1} \hat K\right)^{-1}\|\leq \|A(\mupar)\| \|\tilde T(\mupar)^{-1}\|  \leq 
 \rho^{-1}\|\OpS\| \|\OpS^{-1}\|  (C_{\widehat\Lambda} +\|\hat K\|) 
 \end{aligned}
\end{equation*}
for all $\mupar \in \widehat\Lambda$. By Galerkin orthogonality and coercivity we have
\begin{equation*}
\| A_{h,N}(\mupar)^{-1} P_{h,N} f - A(\mupar)^{-1} f \|  \leq \frac{\|A(\mupar)\|}{\alpha} \inf_{v \in \Vspace_{h,N}} \| A(\mupar)^{-1} f -v\|_{\Vspace},\quad \text{for all }f \in \Vspace.
\end{equation*}
Therefore by density of $\bigcup_{h,N}\Vspace_{h,N} \subset \Vspace$, compactness of $\widehat\Lambda$, and continuity of $\mupar\mapsto A(\mupar)^{-1}$ we have
\begin{equation}\label{eq:pointwise_conv_A}
\lim_{h\to 0,N\to\infty}\sup_{\mupar\in\widehat\Lambda}
\| A_{h,N}(\mupar)^{-1} P_{h,N} f - A(\mupar)^{-1} f \|_{\Vspace} =0
\qquad \mbox{for all }f\in\Vspace.
\end{equation}
We will show that this implies 
\begin{equation}\label{eq:norm_conv_A}
\lim_{h\to 0,N\to\infty}\sup_{\mupar\in\widehat\Lambda}
\| (A(\mupar)^{-1}  - A_{h,N}(\mupar)^{-1} P_{h,N})\hat K\|_{L(\Vspace)} = 0.
\end{equation}
In fact, for given $\epsilon>0$ the relatively compact set 
$U:=\{ \hat K f~|~\|f\|\leq 1\}$ can be covered by a finite number of balls 
$B_r(f_m)$, $m=1,\dots M(\epsilon)$ with radius 
$r:=\epsilon/3\sup_{\mupar\in\widehat\Lambda}\|A(\mupar)\|$. Due to 
\eqref{eq:pointwise_conv_A} there exist $h_0,N_0>0$ such that 
$\| A_{h,N}(\mupar)^{-1} P_{h,N} f_j - A(\mupar)^{-1} f_j\|\leq \epsilon/3$ for 
all $j=1,\dots,M(\epsilon)$, $h\leq h_0$, $N\geq N_0$ and $\mupar\in\widehat\Lambda$. 
Since all $f\in U$ are contained in some ball $B_r(f_j)$, we have
\begin{align*}
&\|A_{h,N}(\mupar)^{-1}P_{h,N}f-A(\mupar)^{-1}f\|
\leq \|A_{h,N}(\mupar)^{-1}P_{h,N}(f-f_j)\|\\
&+ \|A_{h,N}(\mupar)^{-1}P_{h,N}f_j-A(\mupar)^{-1}f_j\|
+ \|A(\mupar)^{-1}(f_j-f)\|
\leq 2 (C_{\widehat\Lambda}\|\OpS\|+\|\hat{K}\|)r+\frac{\epsilon}{3} = \epsilon
\end{align*}
completing the proof of \eqref{eq:norm_conv_A}. Hence by
a Neumann series argument (see \cite[Theorem 10.1]{Kress}) we have
\begin{equation*}
 \|\left(\Id - A_{h,N}(\mupar)^{-1} P_{h,N} \hat K\right)^{-1}\| \leq 
 \frac{\|\left(\Id - A(\mupar)^{-1} \hat K\right)^{-1}\|}{1- \|\left(\Id - A(\mupar)^{-1} \hat K\right)^{-1} \left(  A(\mupar)^{-1} \hat K - A_{h,N}(\mupar)^{-1} P_{h,N} \hat K\right)\|},
\end{equation*}
if the denominator is positive. By \eqref{eq:norm_conv_A} there are 
$h_0,N_0>0$ such that the denominator is $\geq \frac{1}{2}$ for all 
$h\leq h_0$ and $N\geq N_0$. 
In view of \eqref{eq:factorizationT} this implies uniform boundedness of 
$\|\widetilde T_{h,N}(\mupar)^{-1}\|$ in $h,N$, and $\mupar$. 
\end{proof}

\begin{prop}\label{prop:EW}
Under the assumptions of Theorem \ref{Theo:ConvEWP} the following holds true:
\begin{enumerate}
\item If there exists a sequence $(\mupar_{h,N}) \subset \Lambda'$ of discrete eigenvalues to \eqref{eq:diskrEWP} converging to $\mupar_0 \in \Lambda'$
as $h\to 0$ and $N\to\infty$, then $\mupar_0\in\SpLam$.
\item For each $\mupar_0\in \SpLam$ there exists a sequence 
$(\mupar_{h,N})_{h,N} \subset \Lambda'$ of discrete eigenvalue to \eqref{eq:diskrEWP} converging to $\mupar_0$.
\item For all $\mupar_0 \in \Lambda'\setminus\SpLam$ there exist constants 
$h_0,N_0,\epsilon>0$ such that the set $\{\mupar\in\Lambda':|\mupar-\mupar_0|<\epsilon\}$ 
contains no discrete eigenvalues for $h\leq h_0$ and $N\geq N_0$. 
\end{enumerate}
\end{prop}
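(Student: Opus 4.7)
The plan is to derive parts (1) and (3) together as qualitative and quantitative consequences of Lemma~\ref{lemm:beta_ieq} and the continuity of $\beta$, and to reduce part (2) to a Riesz projection argument driven by the compact factorization of $T_\mupar$ already exploited in the proof of Lemma~\ref{lemm:beta_ieq}.

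For part (3), I would fix $\mupar_0\in\Lambda'\setminus\SpLam$. Since $\SpLam$ is discrete without accumulation points in $\Lambda$ and since Assumption~\hyperlink{AssE}{E} yields a neighborhood of $\mupar_0$ on which the constants in Assumption~\hyperlink{AssC}{C} are uniform, I select $\epsilon>0$ so small that $\widehat\Lambda:=\overline{B_\epsilon(\mupar_0)}$ lies in that neighborhood and is disjoint from $\SpLam$. On $\widehat\Lambda$ every $T_\mupar$ is boundedly invertible by Theorem~\ref{theo:ConvTheo}, so the continuous inf-sup constant $\beta(\mupar)=\|T_\mupar^{-1}\|^{-1}$ is strictly positive and continuous, hence admits a uniform lower bound $\rho_0>0$. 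Lemma~\ref{lemm:beta_ieq} then supplies $h_0,N_0,\rho>0$ with $\beta_{h,N}(\mupar)\geq\rho$ on all of $\widehat\Lambda$ for $h\leq h_0$, $N\geq N_0$, and since $\beta_{h,N}$ vanishes at any discrete eigenvalue, no such eigenvalue lies in $\widehat\Lambda$. Part (1) is an immediate consequence: a sequence of discrete eigenvalues converging to $\mupar_0\notin\SpLam$ would eventually enter the eigenvalue-free disk produced by part (3), which is impossible.

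For part (2), I would fix $\mupar_0\in\SpLam\cap\Lambda'$, choose $\epsilon>0$ so that the punctured closed disk $\overline{B_\epsilon(\mupar_0)}\setminus\{\mupar_0\}$ is disjoint from $\SpLam$ and lies in a neighborhood on which Assumptions~\hyperlink{AssC}{C} and \hyperlink{AssE}{E} are uniform, and denote by $T_\mupar^{h,N}$ the operator on $\Vspace_{h,N}$ representing the discrete form $\stot_\mupar$. Applying Lemma~\ref{lemm:beta_ieq} on the compact contour $\gamma:=\partial B_\epsilon(\mupar_0)\subset\Lambda\setminus\SpLam$, both $T_\mupar^{-1}$ and $(T_\mupar^{h,N})^{-1}$ exist and are uniformly bounded on $\gamma$ for sufficiently small $h$ and large $N$. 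I would then introduce the Riesz projections
\begin{equation*}
\mathcal{P}:=\frac{1}{2\pi i}\oint_\gamma T_\mupar^{-1}\,d\mupar,\qquad
\mathcal{P}_{h,N}:=\frac{1}{2\pi i}\oint_\gamma (T_\mupar^{h,N})^{-1}\,d\mupar.
\end{equation*}
Since $\mupar_0$ is a genuine pole of the meromorphic resolvent $\mupar\mapsto T_\mupar^{-1}$ by the analytic Fredholm theorem, $\mathcal{P}\neq 0$. If one establishes the norm convergence $\mathcal{P}_{h,N}P_{h,N}\to\mathcal{P}$ in $L(\Vspace)$, then $\mathcal{P}_{h,N}\neq 0$ eventually, which forces $\mupar\mapsto(T_\mupar^{h,N})^{-1}$ to have a pole inside $B_\epsilon(\mupar_0)$, i.e.\ a discrete eigenvalue $\mupar_{h,N}\in B_\epsilon(\mupar_0)$. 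Letting $\epsilon$ run through a null sequence produces the required approximating sequence.

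The hard step is exactly this uniform norm convergence of the discrete resolvent along $\gamma$, which in the classical spectral theory of Galerkin methods rests on a compact embedding that is unavailable here (recall that $\Vext$ does not embed compactly into $L^2$). The way around it is to recycle the factored structure from the proof of Lemma~\ref{lemm:beta_ieq}: the operator $\widetilde T(\mupar)$ associated with $\stot_\mupar(\OpS\cdot,\cdot)$ differs from the $\OpS$-coercive operator $A(\mupar)$ only by the compact perturbation $\hat K$, so modulo the isomorphism $\OpS$ one can write $T_\mupar^{-1}=\OpS\,(\Id-A(\mupar)^{-1}\hat K)^{-1}A(\mupar)^{-1}$, and an analogous formula holds in the discrete setting. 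The three ingredients needed are then: (i) uniform boundedness and pointwise convergence of $A_{h,N}(\mupar)^{-1}P_{h,N}$ for $\mupar\in\gamma$, (ii) the uniform norm convergence $A_{h,N}(\mupar)^{-1}P_{h,N}\hat K\to A(\mupar)^{-1}\hat K$ on $\gamma$, which is the $\mupar$-uniform strengthening of \eqref{eq:norm_conv_A} made possible by the compactness of $\{\hat K f:\|f\|_\Vspace\leq 1\}$ together with the compactness of $\gamma$ and the continuity of $A(\mupar)^{-1}$ in $\mupar$, and (iii) a uniform Neumann-series bound for $(\Id-A_{h,N}(\mupar)^{-1}P_{h,N}\hat K)^{-1}$, supplied by the discrete inf-sup bound of Lemma~\ref{lemm:beta_ieq} on $\gamma$. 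Combining (i)--(iii) upgrades pointwise resolvent convergence on $\gamma$ to uniform norm convergence, after which a standard interchange with the contour integral yields $\mathcal{P}_{h,N}P_{h,N}\to\mathcal{P}$ in $L(\Vspace)$ and completes part~(2).
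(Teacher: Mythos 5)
Your parts (1) and (3) are sound and coincide with the paper's argument up to the order of derivation: the paper proves (1) directly from the continuity of $\beta$, Lemma~\ref{lemm:beta_ieq}, and the fact that $\beta_{h,N}$ vanishes at a discrete eigenvalue, and then gets (3) from (1) by contradiction; you prove (3) directly and deduce (1), which is logically equivalent and equally valid. Part (2), however, has two genuine gaps. First, your claim that $\mathcal{P}=\frac{1}{2\pi i}\oint_\gamma T_\mupar^{-1}\,d\mupar\neq 0$ because $\mupar_0$ is ``a genuine pole'' is false for holomorphically parameter-dependent families: the residue of $T_\mupar^{-1}$ can vanish at a genuine eigenvalue. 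Already the scalar family $T_\mupar=(\mupar-\mupar_0)^2$ on $\Vspace=\setC$ has eigenvalue $\mupar_0$ but $T_\mupar^{-1}=(\mupar-\mupar_0)^{-2}$, whose residue is zero. Nonvanishing of the contour integral of the resolvent is special to linear pencils $T_\mupar=A-\mupar\,\Id$; for nonlinear dependence one must integrate $T_\mupar^{-1}T_\mupar'$ (whose trace counts multiplicities) and invoke the operator-valued Rouch\'e theorem of Gohberg--Sigal, machinery your sketch does not provide. Second, your ``analogous formula in the discrete setting'' does not exist: $\Vspace_{h,N}$ is \emph{not} invariant under $\OpS$ --- this is precisely the difficulty the paper emphasizes at the start of Section~4 --- so the discrete operator of \eqref{eq:diskrEWP} does not factor as $A_{h,N}(\mupar)(\Id-A_{h,N}(\mupar)^{-1}P_{h,N}\hat K)$ composed with $\OpS$. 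That factorization holds only for $\tilde T_{h,N}(\mupar)=P_{h,N}T_\mupar\OpS$ restricted to $\Vspace_{h,N}$, whose characteristic values are \emph{not} the discrete eigenvalues of \eqref{eq:diskrEWP}; Lemma~\ref{lemm:Sh_lower_bound} converts between the two problems only at the level of inf-sup \emph{inequalities}, with the error $\|(\Id-P_{h,N})\OpS P_{h,N}\|$, which is useless for transplanting pole locations.

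Two further remarks on your ``hard step''. Norm convergence of the full discrete resolvents on $\gamma$ is both false in general without a compact embedding (take $T_\mupar=\Id$: then $(T^{h,N})^{-1}P_{h,N}=P_{h,N}$ and $\|P_{h,N}-\Id\|_{L(\Vspace)}=1$ whenever $\Vspace_{h,N}\neq\Vspace$) and unnecessary for your conclusion: strong convergence $\mathcal{P}_{h,N}P_{h,N}f\to\mathcal{P}f$, which follows from quasi-optimality uniformly on the compact contour $\gamma$, already propagates nonvanishing --- if only $\mathcal{P}\neq 0$ were guaranteed, which by the first gap it is not. The paper's actual proof of (2) sidesteps residues entirely and is much more elementary: with an eigenvector $u_0$ it shows $\beta_{h,N}(\mupar_0)\leq C_{\Lambda}\|P_{h,N}u_0-u_0\|_\Vspace/\|P_{h,N}u_0\|_\Vspace\to 0$, while Lemma~\ref{lemm:beta_ieq} keeps $\beta_{h,N}(\mupar)\geq\rho$ on $\partial B_\epsilon(\mupar_0)$; if there were no discrete eigenvalue in $\overline{B_\epsilon(\mupar_0)}$, the finite-dimensional matrix resolvent would be holomorphic there, and Cauchy's integral formula would force $\|\tilde T_{\mupar_0}^{-1}\|\leq\sup_{\mupar\in\partial B_\epsilon(\mupar_0)}\|\tilde T_\mupar^{-1}\|$, i.e.\ $\beta_{h,N}(\mupar_0)\geq\min_{\partial B_\epsilon(\mupar_0)}\beta_{h,N}$, contradicting the two displayed bounds. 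Your Riesz-projection plan can in principle be repaired via the Gohberg--Sigal theory, but as written part (2) does not go through.
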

\begin{proof}
\emph{Part 1:} 
Let $(\mupar_{h,N}) \subset \Lambda$ be a sequence of discrete eigenvalues 
converging to $\mupar_0 \in \Lambda$ and assume that $\mupar_0\notin \SpLam$. 
Then $\beta(\mupar_0)=\|T_{\mupar_0}^{-1}\|^{-1}>0$, and by continuity of $\beta$ 
at $\mupar=\mupar_0$ there exists $\epsilon>0$ such that 
$\inf\{\beta(\mupar)>0~|~\mupar\in\Lambda,|\mupar -\mupar_0|\leq \epsilon\}>0$. 
W.l.o.g.\ $B_{\epsilon}(\mupar_0):=\{\mupar\in\setC ~|~|\mupar-\mupar_0|< \epsilon\}$ 
is contained in some $\widehat\Lambda$ from 
Assumption~\hyperlink{AssE}{E}. Then due to Lemma \ref{lemm:beta_ieq} there exist $h_0,N_0>0$ such that 
$\beta_{h,N}(\mupar_{h,N})>0$ for all $h\leq h_0$
and $N\geq N_0$. This implies that the unique solution to \eqref{eq:diskrEWP} 
for such $h$ is $u_{h,N}=0$, contradicting the assumption that 
$\mupar_{h,N}$ is a discrete eigenvalue.

\emph{Part 2:} 
If $\mupar_0\in \SpLam$, then $\beta(\mupar_0)=0$ 
and due to discreteness of $\SpLam$ and holomorphy of 
$\mupar\mapsto T_{\mupar}^{-1}$ on $\Lambda\setminus \SpLam$ 
there exists $\epsilon>0$ such that $\beta(\mupar)>0$ for all 
$\mupar\in \overline{B_{\epsilon}(\mupar_0)}\setminus\{\mupar_0\}$. 
Again, we may assume that the independence properties of Assumption~\hyperlink{AssE}{E}
hold in $\overline{B_{\epsilon}(\mupar_0)}$.
By continuity of $\beta$ and compactness of
$\partial B_\epsilon(\mupar_0)$ the number 
$\rho^\epsilon:=\min\{\beta(\mupar)~|~\mupar \in \partial B_\epsilon(\mupar_0)\}$ 
is strictly positive. By Lemma \ref{lemm:beta_ieq} there exists $\rho>0$ 
such that $\beta_{h,N}(\mupar)\geq \rho$ 
for all $\mupar \in  \partial B_\epsilon(\mupar_0)$, 
$h\leq h_0$ and $N\geq N_0$. Let $u_0\in \Vspace$ %, $\|\u_0\|_{\Vspace}=1$ 
be an eigenvector corresponding to $\mupar_0$, i.e.\ 
$\stot_{\mupar_0}(u_0,v)=0$ for all $v\in\Vspace$. Then with 
$C_{\Lambda}:=\sup\{\|\stot_{\mupar}\|~|~\mupar\in\overline{\Lambda}\}$
\[
\beta_{h,N}(\mupar_0)
\leq \sup_{\substack{v \in \Vspace_{h,N}\\ v\neq 0}}
\frac{|\stot_{\mupar_0}(P_{h,N}u_0,v)|}{\|P_{h,N}u_0\|_{\Vspace}\|v\|_{\Vspace}}
= \sup_{\substack{v \in \Vspace_{h,N}\\ v\neq 0}}
\frac{|\stot_{\mupar_0}(P_{h,N}u_0-u_0,v)|}{\|P_{h,N}u_0\|_{\Vspace}\|v\|_{\Vspace}}
\leq C_{\Lambda}\frac{\|P_{h,N}u_0-u_0\|_{\Vspace}}{\|P_{h,N}u_0\|_{\Vspace}}.
\]
The right hand side converges to $0$, and hence for sufficiently small $h$ and 
large $N$ we have 
\begin{equation}\label{eq:aux_EVP_discr}
\beta_{h,N}(\mupar) \geq \rho>   \beta_{h,N}(\mupar_0)
\qquad \mbox{for all }\mupar \in \partial B_\epsilon(\mupar_0).
\end{equation}
Assume that for some such $h$ there exist no discrete eigenvalues 
in $\overline{B_{\epsilon}(\mupar_0)}$. Let $\tilde{T}_{\mupar}$ be a matrix 
representing $\stot$ on $\Vspace_{h,N}$ with respect to some fixed basis. 
Then $\tilde{T}_{\mupar}$ is invertible for all $\mupar\in B_{\epsilon}(\mupar_0)$ 
and since $\tilde{T}_{\mupar}$ depends holomorphically on $\mupar$, so does 
$\tilde{T}_{\mupar}^{-1}$. Moreover, $\beta_{h,N}(\mupar) = \|\tilde{T}_{\mupar}^{-1}\|^{-1}$. 
It follows from Cauchy's integral formula
$
\tilde{T}_{\mupar_0}^{-1}
=\frac{1}{2\pi i}\int_{\partial B_{\epsilon}(\mupar_0)}
\frac{d\mupar}{\mupar-\mupar_0}\tilde{T}_{\mupar}^{-1}
$
that $\|\tilde{T}_{\mupar_0}^{-1}\|\leq \sup\{\|\tilde{T}_{\mupar}^{-1}\|:
\mupar\in \partial B_{\epsilon}(\mupar_0)\}$. This contradicts \eqref{eq:aux_EVP_discr}.

\emph{Part 3:} Suppose the assertion is false for some 
$\mupar_0\in\Lambda\setminus \SpLam$. Then there exists a sequence 
of discrete eigenvalues $(\mupar_{h,N})$ converging to $\mupar_0$ as $h\to 0$ and 
$N\to\infty$, 
and with the help of part 1 we obtain the contradiction $\mupar_0\in\SpLam$. 
\end{proof}

With the help of Proposition \ref{prop:EW} the proof of Theorem \ref{Theo:ConvEWP} 
is a straightforward compactness argument: Part 2 implies that 
$\sup_{\mupar\in\SpLam\cap\Lambda'}\inf_{\mupar_{h,N}\in\SpLam_{h,N}\cap\Lambda'}|\mupar-\mupar_{h,N}|\to 0$. 
Given $\delta>0$ sufficiently small we can use compactness
of $\Lambda'$ to obtain a finite covering of 
$\Lambda'\setminus \bigcup_{\mupar\in\SpLam\cap\Lambda'}B_{\delta}(\mupar)$ by 
balls described in part 3. Since none of these balls contains a 
discrete eigenvalue in the limit $h\to 0$ and $N\to\infty$, it follows that 
$\sup_{\mupar_{h,N}\in\SpLam_{h,N}\cap\Lambda'}\inf_{\mupar\in\SpLam\cap\Lambda'}
|\mupar-\mupar_{h,N}|\leq \delta$. As $\delta>0$ was arbitrary, the limit is $0$. 

\section{Complex scaling/ PML}
\label{sec:PML}
In this section we first apply Theorem~\ref{theo:ConvTheo} to a Perfectly Matched Layer (PML) discretization
of the diffraction problem \eqref{eqs:scat_prbl} in Sec.~\ref{sec:setting}. In the literature there exist
already some convergence results for such problems  (see e.g.\ \cite{BNiBonLeg:2004}).
However, in our approach the truncation error is treated as an approximation error and not as an error on 
the continuous level. Therefore, the techniques used in \cite{LassasSomersalo:98,CW:03,BNiBonLeg:2004,PC2,KimPasciak:09,kalvin:11}  to handle 
this modeling error are not needed.

Moreover, since the PML method is better known than the Hardy space method presented in the next section, 
this section may help to follow the framework of the Hardy space method.

We will be particularly interested in complex frequencies $\kappa \in \setC$ with positive real part representing 
the angular frequency and non positive imaginary part representing a damping in time. 
Since the radiation condition Def.~\ref{defi:modal_rad_cond} 
is only defined for positive frequencies $\kappa$, we have to define a proper holomorphic extension. Last we 
formulate the variational framework and prove the Assumptions \hyperlink{AssA}{A}-\hyperlink{AssE}{E} of Sec.~\ref{sec:ConvTheo}. Theorem~\ref{theo:ConvTheo} and Theorem \ref{Theo:ConvEWP}
yield convergence for discrete solutions to the diffraction problem as well as to the corresponding resonance problem.

\subsection{complex scaling radiation condition}\label{sec:scaling_RC}
In this and the following 
subsection we consider for simplicity the case of a single waveguide 
$\Oe:=(0,\infty)\times \bp$ with left boundary $ \bpM:=\{0\}\times \bp$, 
but without an interior domain:
\begin{subequations}
\label{eq:WGproblem}
 \begin{align}
&- \Delta \us - \kappa^2 \us = 0 && \mbox{in } \Oe, \label{eq:Helmholtz} \\
& \Bdv \us = 0 && \mbox{on }\partial\Oe\setminus\bpM, \label{eq:HelmholtzBC}\\
&  \frac{ \partial \us}{\partial x} = g_{\rm in} && \mbox{on } \bpM, \label{eq:WGproblemDV}\\
&  \us \text{ satisfies a radiation condition.} \label{eq:abstractrc}
 \end{align}
\end{subequations}
If we use the modal radiation condition (see Definition \ref{defi:modal_rad_cond}), then plugging \eqref{eq:WGproblemDV} 
into \eqref{eq:expansion_u} yields $c_n= \lsp g_{\rm in},\varphi_n\rsp_{L^2(\bp)}/(i \kappa_n)$, and we obtain
the unique solution 
\begin{equation}
\label{eq:seriesrepr}
\us(x,y)=\sum_{n=1}^\infty \frac{\lsp g_{\rm in},\varphi_n\rsp_{L^2(\bp)}}{i \kappa_n} e^{i \kappa_n x} \varphi_n (y),\qquad (x,y) \in \Oe.
\end{equation}
\begin{defi}[complex scaling radiation condition]
\label{Def:RCcomplexScaling}
 Let $\sigma\in \setC$ with $\Re(\sigma)>0$ and $\Im(\sigma)>0$ be the complex scaling parameter. %, and suppose 
A function $u \in H^2_{\rm loc}(\Oe\cup\bpM)$ satisfies the complex scaling radiation condition with 
parameter $\sigma$ if the mapping $(0,\infty)\to L^2(\bp)$, $x\mapsto u(x,\cdot)$ has a holomorphic extension 
$\mathcal{S}_{\sigma}\to L^2(\bp)$ to an open set $\mathcal{S}_{\sigma}\subset \setC$ containing  
$\{z \in \setC\setminus\{0\}~|\arg(z)\in [0,\arg(\sigma)]\}$ and if the function
\begin{equation}
 u_{\sigma}(x,y):=u(\sigma x,y),\qquad (x,y) \in \Oe,
\end{equation}
belongs to $H^2(\Oe)$.
\end{defi}
Complex scaling was used in quantum physics since the 1970s (see e.g.\ \cite{HislopSigal:96,moiseyev:98}) and reintroduced 
by B\'erenger \cite{Berenger:94} under the name Perfectly Matched Layer (PML). For time-depending problems, the complex scaling parameter 
is typically chosen
frequency dependent. Since for resonance problems this would lead to nonlinear eigenvalue problems, we avoid the 
incorporation of the frequency into the
complex scaling. Moreover, due to the waveguide structure we may have several wavenumbers and it is not clear a priori, 
which of these should be used in the complex scaling.
\begin{lemm}\label{lemm:equivalence_complex_scal_rc}
Let $\sigma\in \setC$ with $\Re(\sigma)>0$ and $\Im(\sigma)>0$ be any complex scaling parameter, let $\kappa>0$, and 
assume \eqref{eq:noWoodAnomaly}.
Then  a solution $\us\in H^1_{\rm loc}(\Oe\cup\bpM)$ to  \eqref{eq:Helmholtz}, \eqref{eq:HelmholtzBC}, and \eqref{eq:WGproblemDV} 
satisfies the modal radiation condition (see Def.~\ref{defi:modal_rad_cond}) if and only if it satisfies the complex 
scaling radiation condition with parameter $\sigma$ (see Def.~\ref{Def:RCcomplexScaling}).
In this case $u_{\sigma}$ satisfies 
\begin{subequations}
\label{eq:PMLproblem}
 \begin{align}
& - \frac{1}{\sigma^2} \partial_x^2  u_{\sigma} - \Delta_{\bp}  u_{\sigma} - \kappa^2 u_{\sigma} = 0 && \mbox{in } (0,\infty)\times \bp,\\
& \Bdv u_{\sigma} = 0 && \mbox{on }(0,\infty) \times \partial \bp, \\
&  \frac{ \partial u_{\sigma}}{\partial x} = \sigma g_{\rm in} && \mbox{on } \{0\}\times \bp
 \end{align}  
\end{subequations}
and is given explicitly by
 \begin{equation}\label{eq:PMLseries}
   u_{\sigma}(x,y)=\sum_{n=1}^\infty \frac{\lsp g_{\rm in},\varphi_n\rsp_{L^2(\bp)}}{i \kappa_n} e^{i \sigma \kappa_n x} \varphi_n (y),\qquad (x,y)\in \Oe.
 \end{equation}
Vice versa, any solution $u_{\sigma}\in H^2(\Oe\cup\Gamma)$ 
to \eqref{eq:PMLproblem} corresponds to the holomorphic 
extension of a solution to \eqref{eq:WGproblem}.
\end{lemm}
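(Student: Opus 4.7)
\medskip\noindent\textbf{Proof plan.}
My plan is to exploit separation of variables on both sides of the equivalence. For $\kappa>0$ with \eqref{eq:noWoodAnomaly}, any $H^1_{\rm loc}$ solution of \eqref{eq:Helmholtz}--\eqref{eq:WGproblemDV} admits a modal expansion $u(x,\cdot)=\sum_n c_n(x)\varphi_n$, and each $c_n$ solves $-c_n''+(\lambda_n-\kappa^2)c_n=0$, so $c_n(x)=a_n e^{i\kappa_n x}+b_n e^{-i\kappa_n x}$. The key observation is that each exponential $e^{\pm i\kappa_n x}$ extends to an entire function of $x\in\setC$, so any candidate $u$ automatically possesses a holomorphic extension to $\setC$ in $x$; the substance lies in which of the two branches is compatible with each radiation condition.

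\medskip\noindent\textbf{Modal $\Rightarrow$ complex scaling.}
Starting from \eqref{eq:seriesrepr}, I would substitute $x\mapsto \sigma x$ termwise to obtain \eqref{eq:PMLseries} as the formal candidate for $u_\sigma$. Convergence in $H^2(\Oe)$ is checked by Parseval: thanks to orthonormality of the $\varphi_n$,
\[
\|u_\sigma\|_{H^2}^2 \;\lesssim\; \sum_{n=1}^\infty
\frac{|\langle g_{\rm in},\varphi_n\rangle|^2}{|\kappa_n|^2}\bigl(1+\lambda_n+|\sigma\kappa_n|^2\bigr)\int_0^\infty \!\! e^{-2\Re(-i\sigma\kappa_n)x}\,dx.
\]
For the finitely many propagating indices one has $\kappa_n>0$ and $\Re(-i\sigma\kappa_n)=\Im(\sigma)\kappa_n>0$; for the remaining evanescent indices $\kappa_n=i\sqrt{\lambda_n-\kappa^2}$ and $\Re(-i\sigma\kappa_n)=\Re(\sigma)\sqrt{\lambda_n-\kappa^2}\to\infty$. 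Both positive-real parts originate precisely from the sign assumptions $\Re(\sigma),\Im(\sigma)>0$; they yield integrability of each term and, combined with the decay of $\langle g_{\rm in},\varphi_n\rangle$ inherited from the regularity of $u$, summability of the series. Holomorphy of $x\mapsto u(x,\cdot)$ in the sector follows because \eqref{eq:seriesrepr} already defines an $L^2(\bp)$-valued holomorphic function on that sector by the same exponential estimates applied with $\sigma x$ replaced by $z$ with $\arg z\in[0,\arg\sigma]$. The PDE \eqref{eq:PMLproblem} then drops out of the chain rule: $\partial_x u_\sigma=\sigma(\partial_x u)(\sigma x,\cdot)$, so $-\sigma^{-2}\partial_x^2 u_\sigma-\Delta_\bp u_\sigma = (-\Delta u)(\sigma x,\cdot)=\kappa^2 u(\sigma x,\cdot)=\kappa^2 u_\sigma$; the lateral boundary condition is unaffected, and evaluating $\partial_x u_\sigma$ at $x=0$ gives the factor $\sigma$ in front of $g_{\rm in}$.

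\medskip\noindent\textbf{Complex scaling $\Rightarrow$ modal.}
Given $u_\sigma\in H^2(\Oe)$ solving \eqref{eq:PMLproblem}, I expand $u_\sigma(x,\cdot)=\sum_n C_n(x)\varphi_n$. The equation yields $-\sigma^{-2}C_n''+(\lambda_n-\kappa^2)C_n=0$, whose solutions are $C_n(x)=a_n e^{i\sigma\kappa_n x}+b_n e^{-i\sigma\kappa_n x}$. The same computation of $\Re(\pm i\sigma\kappa_n)$ as above shows that $|e^{-i\sigma\kappa_n x}|$ grows exponentially in $x$ for \emph{every} $n$; since $\int_0^\infty |C_n|^2\,dx<\infty$ by Parseval and $u_\sigma\in L^2$, we must have $b_n=0$. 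The Neumann trace then fixes $a_n=\langle g_{\rm in},\varphi_n\rangle/(i\kappa_n)$, matching \eqref{eq:PMLseries}. Defining $u$ on $(0,\infty)$ by \eqref{eq:seriesrepr} (convergence of which follows from that of \eqref{eq:PMLseries}) gives a function satisfying the modal radiation condition, and its holomorphic extension agrees with $u_\sigma(\cdot/\sigma,\cdot)$ on the ray $\arg z=\arg\sigma$ by uniqueness of coefficients, hence everywhere in the sector by the identity principle.

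\medskip\noindent\textbf{Main obstacle.}
The routine part is the mode-by-mode ODE analysis; the delicate point is the quantitative summability of the $H^2$ norm, i.e.\ verifying that the formal series \eqref{eq:PMLseries} genuinely defines an element of $H^2(\Oe)$ uniformly in the admissible regularity of the data. This is where the joint assumption $\Re(\sigma)>0$ and $\Im(\sigma)>0$ is indispensable: the first handles the infinitely many evanescent modes, the second handles the finitely many propagating ones, and losing either sign destroys one of the two geometric-decay estimates in the Parseval bound above.
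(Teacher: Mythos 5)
Your forward direction and your treatment of the final ``vice versa'' clause coincide with the paper's own proof: termwise substitution $x\mapsto\sigma x$ in \eqref{eq:seriesrepr}, exponential estimates resting on $\Re(i\sigma\kappa_n)<0$ for all $n$ (the paper makes your ``decay of $\langle g_{\rm in},\varphi_n\rangle$'' precise by combining $\|g_{\rm in}\|_{H^{1/2}}^2=\sum_n(1+\lambda_n)^{1/2}|\langle g_{\rm in},\varphi_n\rangle|^2<\infty$ with the Weyl asymptotics of the $\lambda_n$), the chain rule for \eqref{eq:PMLproblem}, and the observation that the growing branch $e^{-i\sigma\kappa_n x}$ is incompatible with $H^2(\Oe)$.

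There is, however, a genuine logical slip in your backward direction of the equivalence. You begin with ``given $u_\sigma\in H^2(\Oe)$ solving \eqref{eq:PMLproblem}'', but Definition~\ref{Def:RCcomplexScaling} does not hand you the PDE \eqref{eq:PMLproblem}: it only supplies the existence of the $L^2(\bp)$-valued holomorphic extension of $x\mapsto \us(x,\cdot)$ together with the membership $u_\sigma\in H^2(\Oe)$. Hence the modal ODE $-\sigma^{-2}C_n''+(\lambda_n-\kappa^2)C_n=0$, from which you deduce the exponential form of $C_n$, is not available at the point where you invoke it; as written, that paragraph proves the ``vice versa'' statement but not the implication ``complex scaling radiation condition $\Rightarrow$ modal radiation condition''. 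The repair is precisely the identity-principle step you mention only in passing, deployed as the paper does: since $\us$ solves \eqref{eq:Helmholtz} and \eqref{eq:HelmholtzBC}, its modal coefficients on $(0,\infty)$ equal $c_n e^{i\kappa_n x}+d_n e^{-i\kappa_n x}$; because $x\mapsto\us(x,\cdot)$ extends holomorphically, each coefficient function is holomorphic on $\mathcal{S}_\sigma$ and, by uniqueness of analytic continuation, is given there by $c_n e^{i\kappa_n z}+d_n e^{-i\kappa_n z}$. Restricting along the scaled ray yields $C_n(x)=c_n e^{i\sigma\kappa_n x}+d_n e^{-i\sigma\kappa_n x}$ without ever assuming \eqref{eq:PMLproblem}, and then $u_\sigma\in H^2(\Oe)$ forces $d_n=0$ exactly as you argue. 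With that reordering your argument becomes the paper's proof.
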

\begin{proof}
First assume that $\us$ satisfies Def~\ref{defi:modal_rad_cond}. To show that the right hand side of 
\eqref{eq:seriesrepr} is holomorphic in $x\in \setC\setminus\{0\}$ if $\arg(x) \in [0,\arg(\sigma)]$ it 
suffices to show that the series and its formal complex derivative are absolutely convergent locally uniformly in $x$ in 
the sense that for each $x$ there exist $\epsilon,C>0$ such that 
$\sum_{n=1}^\infty  |\lsp g_{\rm in},\varphi_n\rsp|^2\kappa_n^{-2}|e^{i\sigma\kappa_n\tilde{x}}|^2\leq  C$ 
and  $\sum_{n=1}^\infty|\lsp g_{\rm in},\varphi_n\rsp|^2|e^{i\sigma\kappa_n\tilde{x}}|^2\leq C$ for all 
$\tilde{x}\in\setC$ with $|x-\tilde{x}|<\epsilon$.  Note that $\Re(i \sigma \kappa_n)<0$ for all $n \in \setN$ and 
$\lim_{n\to\infty}\Re(i \sigma \kappa_n)/\sqrt{\lambda_n} =-\Re\sigma$. Hence the uniform bounds follow 
from the Weyl asymptotics of the eigenvalues $\lambda_n$ and 
$\|g_{\rm in}\|_{H^{1/2}}=\sum_{n=1}^\infty (1+\lambda_n)^{1/2}|\lsp g_{\rm in},\varphi_n\rsp|^2<\infty$, i.e.\  
the right hand side of \eqref{eq:seriesrepr} defines the required holomorphic extension. 
Moreover, it is easy to see that $u_{\sigma}$ satisfies \eqref{eq:PMLproblem} 
and \eqref{eq:PMLseries} and belongs to $H^2(\Oe)$. 

Vice versa, assume that $\us$ satisfies the complex scaling radiation condition. 
Since $\us$ solves \eqref{eq:Helmholtz} and \eqref{eq:HelmholtzBC}, the 
series representation \eqref{eq:expansion_u} holds true. Since $x\mapsto \us(
x,\cdot)$ has a $L^2(\bp)$-valued holomorphic extension, the mappings 
$x\mapsto \lsp\us(x,\cdot),\varphi_n\rsp$ are also holomorphic. Therefore, they 
are given by 
$x\mapsto c_n\exp(i\kappa_n x)+d_n\exp(-i\kappa_nx)$, not only for $x\in (0,
\infty)$, but also for 
$x\in {\mathcal S}_{\sigma}$, i.e.\ the holomorphic extension of $\us$ is 
given by the right hand side of 
\eqref{eq:expansion_u} with $x\in  {\mathcal S}_{\sigma}$. As 
\begin{equation*}
u_{\sigma}(x,y)=\sum_{n=1}^\infty \paren{c_ne^{i\sigma \kappa_nx}+d_n e^{-i 
\sigma \kappa_nx}}\varphi_n(y)\qquad \mbox{in } \Oe,
\end{equation*}
the assumption $u_{\sigma} \in H^2(\Oe)$ implies $d_n=0$ for all $n\in \setN$
, i.e.\ $\us$ satisfies the modal radiation condition. \\
Given a solution $u_{\sigma}\in H^2(\Oe\cup\Gamma)$ to \eqref{eq:PMLproblem} 
we can conclude that it is of the form \eqref{eq:PMLseries}, and hence 
corresponds to a holomorphic extension of a solution to \eqref{eq:WGproblem}. 
\end{proof}
Note that the holomorphic extension in Def.~\ref{Def:RCcomplexScaling} does 
not appear explicitly in numerical computations 
since such computations are based on \eqref{eq:PMLproblem}.

\subsection{complex scaling radiation condition for complex frequencies}
For complex frequencies $\kappa$ the choice of the branch cut of the square root function is not 
canonical, and different choices may lead to different modal radiation conditions. 
Similarly, different choices of $\sigma$ may lead to different complex scaling radiation conditions: 
A  solution $u_\sigma \in H^2(\Oe)$ to \eqref{eq:PMLproblem} with complex $\kappa$ is given by \eqref{eq:PMLseries},
if $\kappa_n=\sqrt{\kappa^2-\lambda_n}$ is defined such that $\Re(i\sigma \kappa_n)<0$. Hence, we are led to the following definition.
\begin{defi}\label{def:BranchCutSquareRoot}
For $\sigma=|\sigma|\exp(i\arg(\sigma)) \in \setC$ with $\arg:\setC \to [-\pi,\pi)$ and $\arg(\sigma)\in (0,\frac{\pi}{2})$ we define
\begin{equation}\label{eq:defsqrt}
 \sqrt{z}^\sigma:=\sqrt{|z|}e^{i \frac{\varphi}{2}} \qquad \text{for } z = |z| e^{i \varphi} \text{ with } \varphi \in [-2 \arg(\sigma),2 \pi - 2 \arg(\sigma) ).
\end{equation}
If $\lambda_1\leq \lambda_2\leq \dots$ denote the eigenvalues of $-\Delta_\Gamma$, we define $\kappa_n^\sigma : \setC \to \setC$ for $n \in \setN$ by
$\kappa_n^\sigma=\kappa_n^\sigma(\kappa):=\sqrt{\kappa^2 -\lambda_n}^\sigma$.
\end{defi}
By definition we have $\Re(i\sigma \kappa_n^\sigma)\leq0$ for all $n\in \setN$. We can define similar to Def~\ref{defi:modal_rad_cond} a complex modal radiation condition:
A function $u$ of the form
\begin{equation}
\label{eq:seriesPML}
  u(x,y)=\sum_{n=1}^\infty \paren{c_n e^{i \kappa_n^\sigma x} + d_n e^{-i \kappa_n^\sigma x} }\varphi_n (y),\qquad (x,y)\in \Oe,
\end{equation}
satisfies the modal radiation condition if all coefficients $d_n$ vanish. As in Lemma \ref{lemm:equivalence_complex_scal_rc} equivalence of this modal radiation condition
to the complex scaling radiation condition Def.~\ref{Def:RCcomplexScaling} can be shown if $\Re(i\sigma \kappa_n^\sigma)\neq 0$ for all $n\in \setN$.

Nevertheless, if we would use this definition without any restrictions, we would
get different solutions $\us$ (see \eqref{eq:seriesrepr}) to \eqref{eq:WGproblem} for different PML parameters $\sigma$. 
In other words, different $\sigma$ yield different
radiation conditions. To avoid this undesirable effect, we
define admissible regions for $\kappa$, such that $\kappa_n^{\sigma_1}=\kappa_n^{\sigma_2}$ for $\sigma_1,\sigma_2\in \setC$. 
\begin{defi}\label{def:admissSet}
Let $\lambda_1\leq \lambda_2\leq \dots$ denote the eigenvalues of $-\Delta_\Gamma$ and $\kappa_n^\sigma$ as
defined in Def.\ref{def:BranchCutSquareRoot}. The \emph{admissible set} $\Lambda^\sigma_{\Delta_{\Gamma}} \subset \setC$ 
is the set of all $\kappa\in\setC$ with $\Re\kappa>0$ and $\Im\kappa\leq 0$ such that
 \begin{enumerate}
  \item $\kappa_n^\sigma$ is holomorphic at $\kappa$ and
  \item $\kappa_n^\sigma$ is continuous along the path $\{ \Re(\kappa) - t i \in \setC ~| ~  t \in (0,-\Im(\kappa) ) \} $.
 \end{enumerate}
for all $n \in \setN$.
\end{defi}
For $\kappa \in \Lambda^{\sigma_1}_{\Delta_{\Gamma}} \cap 
\Lambda^{\sigma_2}_{\Delta_{\Gamma}}$ 
the modal radiation conditions with parameters $\sigma_1$ and $\sigma_2$ coincide, since for all $n\in \setN$
$\sqrt{\Re(\kappa)^2-\lambda_n}^{\sigma_1}=\sqrt{\Re(\kappa)^2-\lambda_n}^{\sigma_2}$  and none of the paths
$\left\{ \left(\Re(\kappa) - t i\right)^2-\lambda_n \in \setC ~| ~  t \in (0,-\Im(\kappa) ) \right\} $
has passed the branch cuts of $\sqrt{\cdot}^{\sigma_1}$ and  $\sqrt{\cdot}^{\sigma_2}$. % 

In Fig.~\ref{Fig:domainkappa} two admissible sets are given for a two-dimensional waveguide $\setR_+ \times [0,1]$. 
For $\sigma=1+i$ the branch cut of the square root is the negative imaginary axis, and therefore only in absolute values small imaginary parts of
$\kappa$ are allowed if $\Re(\kappa)$ is a little bit larger than on $\sqrt{\lambda_n}$ (see Fig.~\ref{Fig:domainkappa:1}). 
For $\sigma=1+1.5i$ the branch cut of the square root is in the third quadrant and therefore $\kappa$ with $\Re(\kappa)$ 
a little bit smaller than one $\sqrt{\lambda_n}$ are more problematic (see Fig.~\ref{Fig:domainkappa:2}).

\begin{figure}
  \capstart
 \begin{center}
  \subfigure[$\sigma = 1 +i$ \label{Fig:domainkappa:1}]{\includegraphics[width=0.4\textwidth]{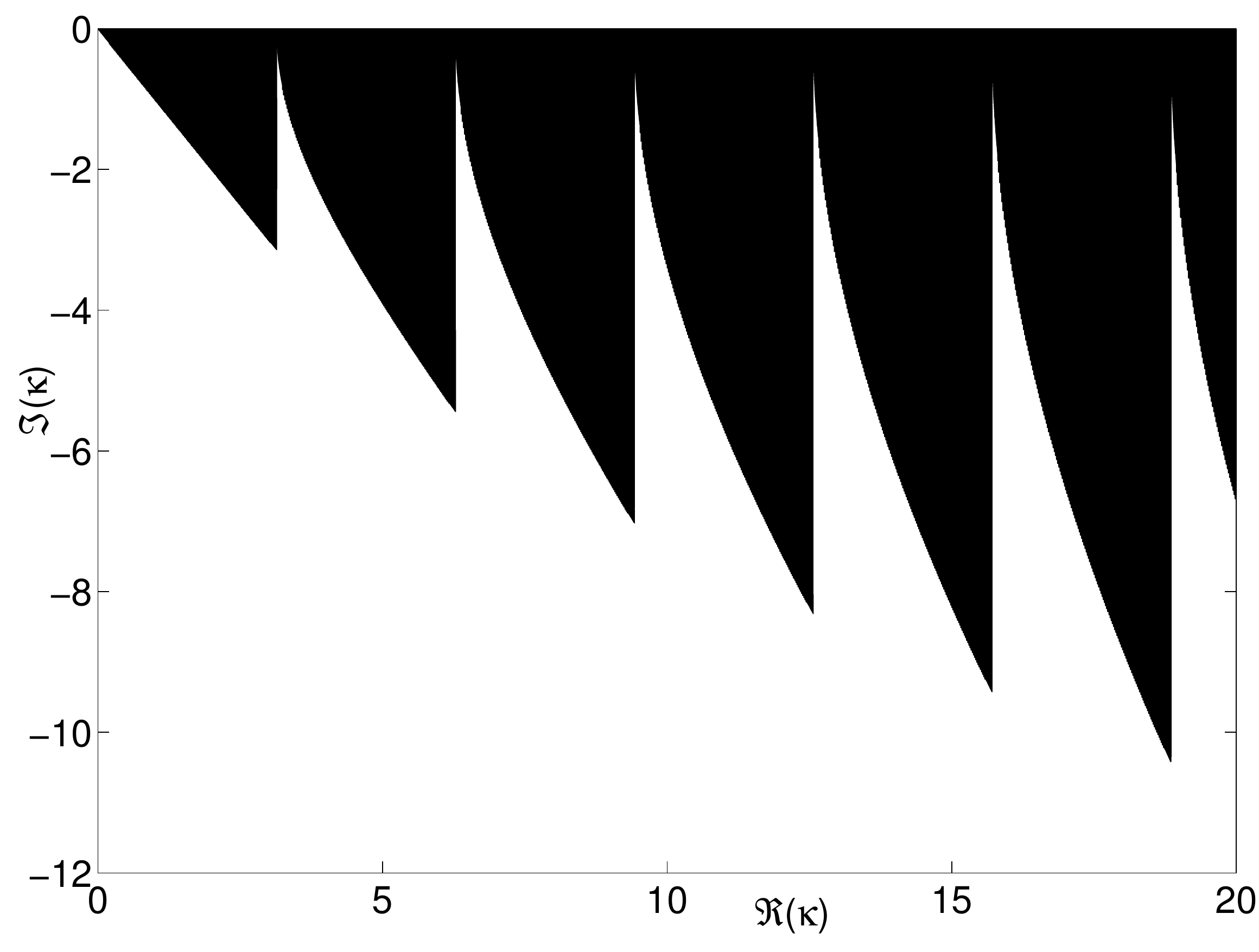}} \hspace{0.05\textwidth}
  \subfigure[$\sigma = 1 +1.5 i$ \label{Fig:domainkappa:2}]{\includegraphics[width=0.4\textwidth]{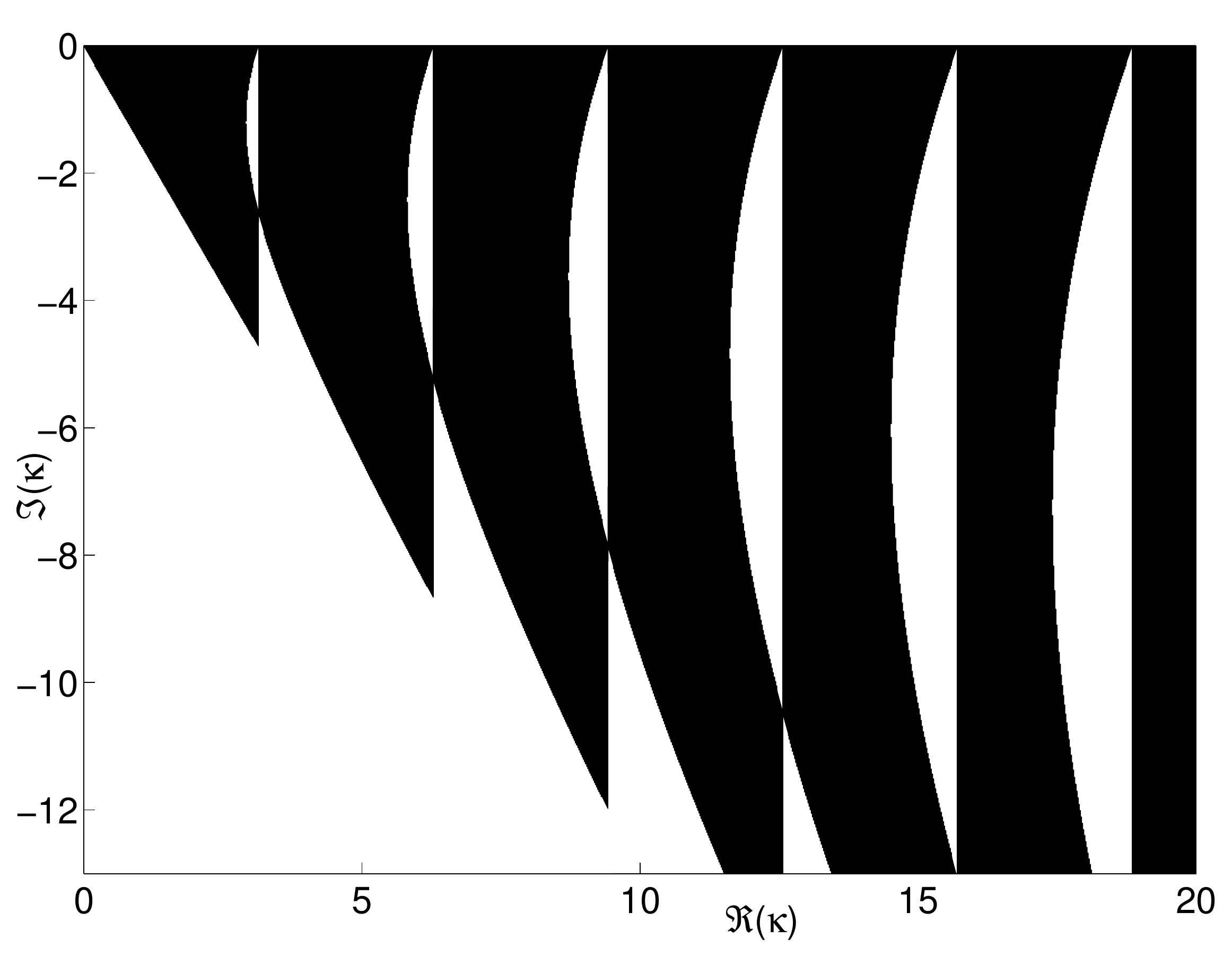}}
  \caption{admissible sets $\Lambda^\sigma_{\Delta_\Gamma}$ for two different $\sigma$ and $\lambda_n=(n-1)^2 \pi^2 $, $n \in \setN$}   
  \label{Fig:domainkappa}
 \end{center}
\end{figure}

Note, that $\Lambda^\sigma_{\Delta_{\Gamma}}$ is the union of the disjoint 
sets 
\begin{equation}\label{eq:components_Lambda}
\begin{aligned}
\left(\Lambda^\sigma_{\Delta_{\Gamma}}\right)^n:=\Big\{\kappa \in \setC~|~&\Re(\kappa)>0,\Im(\kappa)\leq 0,\sqrt{\lambda_n}<\Re(\kappa)<\sqrt{\lambda_{n+1}},\\
&\arg(\kappa^2-\lambda_{n+1}) <-2\arg(\sigma) < \arg(\kappa^2-\lambda_{n})\Big\}.
\end{aligned}
\end{equation}

\subsection{convergence of the PML method}
\label{sec:PML_assumpt}
In the case of several waveguides $W_l=\eta_l((0,\infty)\times \bp_l)$ for $l=1,\dots,L$ (see Sec.~\ref{sec:setting}), we
use the complex scaling vector $\sigma=(\sigma_1,\dots,\sigma_L) \in \setC^L$ with $\Re(\sigma_l),\Im(\sigma_l)>0$ and define
for a solution $u$ to \eqref{eqs:scat_prbl} $\uint:=u|_{\Oi}$,
\begin{equation*}
 u_{l}^{(\sigma_l)}(x,y):= u|_{W_l}\circ \eta_l( \sigma_l x,y),\qquad (x,y)\in (0,\infty)\times \bp_l,\quad l=1,\dots,L,
\end{equation*}
and 
$\uext_{\sigma}:=\left(u_{1}^{(\sigma_1)},\dots, u_{L}^{(\sigma_L)}  \right)^\top.$ 
The admissible set will be
\begin{equation}\label{eq:defi_Lambda}
\Lambda:= \bigcap_{l=1}^L\Lambda^{\sigma^l}_{\Delta_l}.%, \qquad 
\end{equation}
Let us formally state our definition of resonances:
\begin{defi}
$\kappa\in\Lambda$ (for some scaling parameters $\sigma_l$) 
is called a \emph{resonance} if there 
exists a resonance function $u\in H^1_{\rm loc}(\Omega)\setminus\{0\}$ 
satisfying $-\Delta u=\kappa^2 u$ in $\Omega$, $\Bdv u= 0$ on $\partial \Omega$ 
and the complex scaling radiation condition with parameter $\sigma_l$ in 
each waveguide $W_l$.
\end{defi}
We will check point by point the assumptions of Sec.~\ref{sec:ConvTheo} for a complex scaled version of \eqref{eqs:scat_prbl}. 
For notational simplicity we again discuss only the case of Dirichlet boundary conditions, i.e.\ $\Bdv u := u|_{\partial\Omega}$.

\medskip
\emph{Assumption~\hyperlink{AssA}{A}: Exterior and interior spaces.} $\Vint$,$\tr_-$ and $\tracespace$ are defined as in Section~\ref{sec:ConvTheo} after Ass.~\hyperlink{AssA}{A}
with $\bpM=\bigcup_{l=1}^L {\bpM}_l$.
We define $\Vext=\bigoplus_{l=1}^L \Vext_l$ with 
$$\Vext_l:=\{\uext_l \in H^1((0,\infty)\times \bp_l):~\uext_l|_{(0,\infty)\times \partial \bp_l}=0\}.$$
The spaces $\Xspace^1_l$, $\Xspace^2_l$, $\Yspace^1_l$ and $\Yspace^2_l$ are defined as in Sec.~\ref{sec:ConvTheo}. 
The trace operator $\tr_+:\Vext \to \tracespace$ is defined for $\uext=(\uext_1,\dots,\uext_L)^\top \in \Vext$ 
point wise: For $y\in \bpM$ we choose $l\in \{1,\dots,L\}$ such that  $y=\eta_l(0,\tilde y) \in {\bpM}_l$ with $\tilde y\in  \tilde {\bpM}_l$
and define $\left(\tr_+ \uext\right)(y) := \uext_l(0,\tilde y)$.

Finally, we define the bounded sesquilinear forms
\begin{equation*}
\begin{aligned}
  \aint(\uint,\vint) &:=\int_{\Oi} \nabla \uint\cdot \nabla \overline{\vint} \,dx, \qquad
  \bint(\uint,\vint) :=\int_{\Oi} \uint\overline{\vint} \,dx\\
  \aext_l(\uext_l,\vext_l) &:=\int_{0}^\infty \int_{\bp_l} \paren{\frac{1}{\sigma_l}\, \partial_x \uext_l \, 
     \partial_x \overline{\vext_l} 
  + \sigma_l \,\nabla_y \uext_l\cdot \nabla_y \overline{\vext_l}} \,dy\, dx,\\
  \bext_l(\uext_l,\vext_l) &:=\int_{0}^\infty \int_{\bp_l} \sigma_l \, \uext_l \overline{\vext_l} \,dy\, dx,\\
\end{aligned}
\end{equation*} 
and set $\sint:= \aint-\kappa^2\bint$, $\sext_l:=\aext_l-\kappa^2 \bext_l$ 
and $\sext(\uext,\vext):= \sum_{l=1}^L\sext_l(\uext_l,\vext_l)$
for $\uext=(\uext_1,\dots,\uext_L)^\top$ and $\vext=(\vext_1,\dots,\vext_L)^\top$. 

Using these definitions we arrive at the PML variational formulation:
If $\kappa \in \Lambda$ then $u$ is a solution to \eqref{eqs:scat_prbl} with the complex scaling radiation condition with parameter $\sigma_l$ in 
each waveguide $W_l$ if and only if $(\uint,\uext_{\sigma})^\top\in \Vspace$ solves
\begin{equation}
\label{eq:PMLformulation}
 \stot \paren{\svec{\uint\\ \uext_{\sigma}},\svec{\vint\\ \vext}} =  F\left(\svec{\vint\\ \vext}\right),\qquad \svec{\vint\\ \vext}\in \Vspace,
\end{equation}
with
\begin{equation*}
  F\left(\svec{\vint\\ \vext}\right):=\int_{\Oi}f\overline{\vint}\,dx 
+ \sum_{l=1}^L \int_{\bpM_l}\diffq{\ui}{\nu}\overline{\vint}\,ds + \sum_{l=1}^L \sext_l\left( \left(E_{+,l} \ui|_{\bpM_l}\right) \circ \eta_l,\vext_l\right).
\end{equation*}
$E_{+,l}:\tracespace_l \to \Vext_l$ can be any bounded extension operator with bounded
support $\{x \in W_l~|~(E_{+,l} f)(x)\neq 0, f \in \tracespace_l\}$ in $W_l$.

Moreover, with the help of the generalization of 
Lemma \ref{lemm:equivalence_complex_scal_rc} to complex $\kappa$ we can show  
that $\kappa\in\Lambda$ is a resonance if and only if there exists 
$u\in\Vspace\setminus\{0\}$ such that 
\[
\stot_{\kappa}(u,v) = 0\qquad \mbox{for all }v\in\Vspace. 
\]

\medskip
\emph{Assumption~\hyperlink{AssB}{B}: separation of $\Vext$.} In order to simplify the presentation, we only consider the case of one waveguide 
$W_1=(0,\infty) \times \bp$ in the following 
and omit the lower index $1$ for $l=1$. As in Sec.~\ref{sec:ConvTheo} we use the orthogonal set of eigenfunctions $\{\varphi_n:n\in\setN\}\subset \Yspace^2$ to $-\Delta$, i.e.
$-\Delta\varphi_n = \lambda_n\varphi_n$ with $\lambda_n\geq 0$. The orthogonality assumptions are trivial. The norms of $\Xspace_n$ and the separated sesquilinear forms are given by
\begin{equation}\label{eq:SepSesquiForm}
\|u\|_{\Xspace_n}^2 = \|u'\|_{L^2}^2 + (\lambda_n+2)\|u\|_{L^2}^2, \quad \stot_n(u,v) = \frac{1}{\sigma} \lsp u',v' \rsp_{L^2} + \sigma (\lambda_n-\kappa^2)\lsp u,v\rsp_{L^2}.
\end{equation}

\medskip
\emph{Assumption~\hyperlink{AssC}{C}: boundedness and coercivity.} 
$\stot_n$ is bounded by
\begin{equation}\label{eq:Cs_n}
  |\stot_n(u,v) | \leq \max\paren{\frac{1}{|\sigma|}+ |\sigma \kappa^2|,|\sigma|} \| u\|_{\Xspace_n} \|v \|_{\Xspace_n}
\end{equation}
with a constant independent of $n$. For the coercivity we consider each of the disjoints sets of $\Lambda^\sigma_{\Delta_{\Gamma}}$ defined in \eqref{eq:components_Lambda} separately: For $n_0\in \setN$ and $\kappa\in \left(\Lambda^\sigma_{\Delta_{\Gamma}}\right)^{n_0}$ it holds
\begin{equation}\label{eq:SetsCoerc}
-\pi\leq \arg(\kappa^2-\lambda_{n_0+1}) <-2\arg(\sigma) < \arg(\kappa^2-\lambda_{n_0}) \leq 0.
\end{equation}
Note, that $n \mapsto \arg(\kappa^2-\lambda_{n})\in [-\pi,0]$ is monotonically decreasing  since $\lambda_n \to \infty$ for $n\to \infty$. 
We distinguish two cases which for $\kappa>0$ correspond exactly to the cases of propagating modes 
($\kappa^2>\lambda_n$) and evanescent modes ($\kappa^2<\lambda_n$): $n\leq n_0$ and $n>n_0$.
\begin{enumerate}
 \item For $n=1,\dots,n_0$ the right half of \eqref{eq:SetsCoerc} leads to
 \begin{equation}\label{eq:SetsCoerc2}
  -\arg(\sigma)< \arg(\kappa^2-\lambda_{n})+\arg(\sigma)=  \arg(\sigma(\kappa^2-\lambda_{n}))\leq \arg(\sigma),
 \end{equation}
since $\arg(\sigma)\in (0,\frac{\pi}{2})$ and $\arg(\kappa^2-\lambda_{n})\in (-\pi,0]$. We define the rotations of \eqref{eq:Ass_sn_coerc1} by
$$\theta_n(\kappa):=\exp\left(i\left(\frac{\pi+\arg(\sigma)-\arg(\sigma(\kappa^2-\lambda_n))}{2} \right) \right)$$
and compute
\begin{equation*}
 \begin{aligned}
  \frac{\theta_n}{\sigma}&=\frac{1}{|\sigma|} \exp\left(i\left(\frac{\pi-\arg(\sigma)-\arg(\sigma(\kappa^2-\lambda_n))}{2} \right) \right),\\
  -\theta_n \sigma (\kappa^2-\lambda_n) &= -\exp\left(i\left(\frac{\pi+\arg(\sigma)+\arg(\sigma(\kappa^2-\lambda_n))}{2} \right) \right).
 \end{aligned}
\end{equation*}
Using \eqref{eq:SetsCoerc2} we get 
$$\alpha_n(\kappa):=\min\left\{\Re\left( \frac{\theta_n(\kappa)}{\sigma}\right),\frac{\Re\left( \theta_n(\kappa) \sigma (\lambda_n-\kappa^2) \right)}{2+\lambda_n} \right\}>0 $$
and  \eqref{eq:Ass_sn_coerc1} is shown for the separated sesquilinear forms $\stot_n$ defined in \eqref{eq:SepSesquiForm}.
\item For $n=n_0+1,\dots$ we take the left half of \eqref{eq:SetsCoerc}, use $0>\arg(\sigma(\kappa^2-\lambda_{n}))= \arg(\sigma(\lambda_{n}-\kappa^2))-\pi$
and deduce
 \begin{equation}\label{eq:SetsCoerc3}
\arg(\sigma)\leq\arg(\sigma(\lambda_{n}-\kappa^2))<\pi-\arg(\sigma).  
 \end{equation}
Since $\left(\Lambda^\sigma_{\Delta_{\Gamma}}\right)^{n_0}$ is bounded (see Fig.~\ref{Fig:domainkappa}) and $\lambda_j\to \infty$ for $j \to \infty$ 
there exists a constant  $\nrGM^{n_0}\in \setN$ defined by
$$\nrGM^{n_0}:=\min\left\{j \in \setN ~|~ \lambda_{j+1} > 2 \frac{\Re(\sigma \kappa^2)}{\Re(\sigma)} \text{ for all }\kappa\in \left(\Lambda^\sigma_{\Delta_{\Gamma}}\right)^{n_0}\right\}.$$
For $n>\nrGM^{n_0}$ there holds $\Re(\sigma (\lambda_n - \kappa^2))> \frac{\Re(\sigma)}{2}\lambda_n$
and \eqref{eq:Ass_sn_coerc2} holds true with
$$\alpha_n:=\min\left\{ \frac{1}{\Re(\sigma)},\frac{\Re(\sigma)\lambda_{n}}{4+2\lambda_n}\right\}>0.$$

For $n=n_0+1,\dots,\nrGM^{n_0}$ we define similar to the first case 
$$\theta_n(\kappa):=\exp\left(i\left(\frac{\arg(\sigma)-\arg(\sigma(\lambda_n-\kappa^2))}{2} \right) \right)$$
and use \eqref{eq:SetsCoerc3} for
$$\alpha_n(\kappa):=\min\left\{\Re\left( \frac{\theta_n(\kappa)}{\sigma}\right),\frac{\Re\left( \theta_n(\kappa) \sigma (\lambda_n-\kappa^2) \right)}{2+\lambda_n} \right\}>0.$$
\end{enumerate}
Since $\alpha_n \to \min\{\frac{1}{\Re(\sigma)}, \frac{\Re(\sigma)}{2}\} $ for $n\to \infty$, the constant $\alpha(\kappa):=\inf\{\alpha_n(\kappa)~|~n\in \setN\}$ 
in  \eqref{eq:Ass_sn_coerc1} and \eqref{eq:Ass_sn_coerc2} is strictly positive.

\medskip
\emph{Assumption \hyperlink{AssD}{D}: discrete subspaces.} The discrete subspaces are chosen exactly the same way as in Section \ref{sec:ConvTheo}.

\medskip
\emph{Assumption \hyperlink{AssE}{E}: eigenvalue setting.} 
Most properties stated in this assumption are obvious, but we have to argue 
that $\Cs ,\theta_n, \alpha$ and $\nrGM$ can be chosen independent of 
$\kappa$ in a neighborhood $\widehat\Lambda$ of each $\kappa_0\in\Lambda$. If $\widehat\Lambda \subset \left(\Lambda^\sigma_{\Delta_{\Gamma}}\right)^{n}$ for one $n \in \setN$,
then $\nrGM$ is independent of $\kappa \in \widehat\Lambda$. Due to \eqref{eq:SetsCoerc2} and \eqref{eq:SetsCoerc3} $\theta_n$ depends continuously on $\kappa$. The same
holds true for $\Cs$ and $\alpha$. Therefore, they can be chosen independent of $\kappa \in \widehat\Lambda$ if $\widehat\Lambda$ is compact.

\medskip
Since all assumptions are satisfied, Theorem~\ref{theo:ConvTheo} is applicable and yields the following: 
\begin{theorem}[PML for diffraction problems]
If $\kappa \in \Lambda$ with $\Lambda$ defined in \eqref{eq:defi_Lambda} is 
not a resonance, then equation \eqref{eq:PMLformulation} is uniquely 
 solvable with solution $(\uint, \uext)^\top \in \Vspace$ for all right hand sides $F \in \Vspace^*$, and there exists a constant $h_0>0$ such that the discrete variational problems
 \begin{equation}
\label{eq:PMLformulation_disc}
 \atot \paren{\svec{\uint_h\\ \uext_{h,N}},\svec{\vint_h\\ \vext_{h,N}}} - \kappa^2 \btot \paren{\svec{\uint_h\\ \uext_{h,N}},\svec{\vint_h\\ \vext_{h,N}}} = F\paren{\svec{\vint_h\\ \vext_{h,N}}}, 
 \quad \svec{\vint_h\\ \vext_{h,N}}\in \Vspace_{h,N}
\end{equation}
have a unique solution $(\uint_h, \uext_{h,N})^\top \in \Vspace_{h,N}$ for all $h\leq h_0$ and all $N\in \setN$. Moreover, there exists a constant $C>0$ independent of $h$ and $N$ such that
\begin{equation*}
\norm{\svec{\uint\\ \uext}-\svec{\uint_h\\ \uext_{h,N}}}_{\Vspace} 
\leq C\inf_{(w^{\rm int}_h, w^{\rm ext}_{h,N})^\top \in\Vspace_{h,N}}\norm{\svec{\uint\\ \uext}-\svec{w^{\rm int}_h\\ w^{\rm ext}_{h,N}}}_{\Vspace}.
\end{equation*}
\end{theorem}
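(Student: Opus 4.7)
The plan is to apply the abstract Theorem~\ref{theo:ConvTheo} directly, using the verifications of Assumptions~\hyperlink{AssA}{A}--\hyperlink{AssD}{D} that were carried out in Section~\ref{sec:PML_assumpt}. Hence the entire proof reduces to translating the resonance hypothesis into the uniqueness hypothesis required by Theorem~\ref{theo:ConvTheo} and then quoting that theorem.

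First, I would recall that in Section~\ref{sec:PML_assumpt} the spaces $\Vint$, $\Vext$, $\Xspace^j_l$, $\Yspace^j_l$, the trace operators $\tr_\pm$, the trace space $\tracespace$ and the sesquilinear form $\stot = \sint + \sext$ were defined so that Assumption~\hyperlink{AssA}{A} holds verbatim. Assumption~\hyperlink{AssB}{B} was verified by using the eigenfunctions $\varphi_n$ of $-\Delta_l$ on $\bp_l$ and observing the explicit separation formula \eqref{eq:SepSesquiForm}. Assumption~\hyperlink{AssC}{C} was established by the bound \eqref{eq:Cs_n} together with the explicit construction of rotations $\theta_n$ and coercivity constants $\alpha_n$, whose infimum is shown to be strictly positive on each connected component $(\Lambda^\sigma_{\Delta_\Gamma})^{n_0}$; the compact perturbation $K$ in \eqref{eq:compact_pert} is, as for the simple example, built from $(\kappa^2+1)$ times the compact embedding $\Vint \hookrightarrow L^2(\Oi)$. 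Assumption~\hyperlink{AssD}{D} is taken verbatim from the model case, using tensor-product finite elements on truncated waveguides combined with a lifting $\tr_-^\dagger$ defined through an auxiliary coercive boundary value problem on $\Oi$.

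The key remaining ingredient is uniqueness of the continuous variational equation \eqref{eq:PMLformulation} for $F=0$. Here I would invoke the statement (noted at the end of the paragraph verifying Assumption~\hyperlink{AssA}{A}) that, via the generalization of Lemma~\ref{lemm:equivalence_complex_scal_rc} to complex frequencies, an element $u \in \Vspace \setminus \{0\}$ solving $\stot_\kappa(u,v)=0$ for all $v \in \Vspace$ corresponds, after the inverse complex scaling in each waveguide $W_l$, to a nonzero function in $H^1_{\rm loc}(\Omega)$ satisfying $-\Delta u = \kappa^2 u$ in $\Omega$, $\Bdv u = 0$ on $\partial\Omega$ and the complex scaling radiation condition with parameter $\sigma_l$ in each $W_l$. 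By definition this $\kappa$ would be a resonance, contradicting the hypothesis. Hence the homogeneous problem has only the trivial solution, so the abstract uniqueness hypothesis of Theorem~\ref{theo:ConvTheo} is satisfied.

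With Assumptions~\hyperlink{AssA}{A}--\hyperlink{AssD}{D} and the uniqueness hypothesis in place, Theorem~\ref{theo:ConvTheo} gives at once the unique solvability of \eqref{eq:PMLformulation}, the existence of $h_0>0$ such that \eqref{eq:PMLformulation_disc} admits unique discrete solutions for all $h\leq h_0$ and $N\in\setN$, and the quasi-optimal error bound. I expect the only delicate point to be the passage from a nontrivial solution of the variational PML equation to a genuine resonance function on $\Omega$: one must show that membership of $\uext_{\sigma}$ in $\Vext$ combined with the scaled PDE forces the original $\us$ (obtained by $\us(x,y) = \uext_\sigma(\sigma^{-1} x,y)$ along appropriate paths in $\mathcal{S}_\sigma$) to be well defined, to inherit the boundary condition and to satisfy the complex scaling radiation condition with parameter $\sigma_l$. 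This is precisely the content of the generalization of Lemma~\ref{lemm:equivalence_complex_scal_rc}, which relies on the admissibility of $\kappa \in \Lambda$ so that the coefficients $\kappa_n^\sigma$ are well defined and $\Re(i\sigma_l \kappa_n^{\sigma_l})\leq 0$ holds; once this equivalence is in hand, the reduction to Theorem~\ref{theo:ConvTheo} is immediate.
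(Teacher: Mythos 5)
Your proposal is correct and follows essentially the same route as the paper: the theorem is obtained by quoting Theorem~\ref{theo:ConvTheo} after the point-by-point verification of Assumptions~\hyperlink{AssA}{A}--\hyperlink{AssD}{D} in Sec.~\ref{sec:PML_assumpt}, with the uniqueness hypothesis supplied by the stated equivalence (via the complex-$\kappa$ generalization of Lemma~\ref{lemm:equivalence_complex_scal_rc}) between $\kappa$ being a resonance and the existence of a nontrivial solution of the homogeneous variational problem. Your closing observation that this equivalence is the only genuinely delicate step is accurate --- the paper likewise asserts it without detailed proof in the paragraph verifying Assumption~\hyperlink{AssA}{A}.
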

Part of the approximation error is the error due to truncation of the infinite PML. 
In each waveguide $W_l$, $l=1,\dots,L$, we approximate (cf. \eqref{eq:PMLseries}) 
\begin{equation*}
 \tilde u_l (x,y):=\sum_{n=1}^\infty c_n e^{i \sigma_l \sqrt{\kappa^2-\lambda_n^l}^{\sigma_l} x} \varphi_n^l(y),\quad (x,y)\in (0,\infty)\times \bp_l,
\end{equation*}
by $0$ for all $x\geq \rho_N, y \in \bp_l$ ($\rho_N$ being the length of the PML defined in Sec.~\ref{sec:ConvTheo} after Ass.~\hyperlink{AssD}{D}). 
Hence, suppressing the indices $l$ the truncation error can be estimated by 
\begin{equation}
 \|\tilde u \|_{H^1((\rho,\infty)\times \bp)}^2 \leq \sum_{n=1}^\infty |c_n|^2 \left(|\kappa_n^\sigma \sigma|^2 + 
   \frac{1 + \lambda_n}{-2 \Re(i \kappa_n^\sigma \sigma)} \right) e^{2 \Re(i \kappa_n^\sigma \sigma) \rho_N}
\end{equation}
with $\kappa_n^\sigma $ defined in Def.~\ref{def:BranchCutSquareRoot}. 
Due to $\Re(i \kappa_n^\sigma \sigma)<0$ the truncation error is exponentially decreasing 
with increasing $\rho_N$. Nevertheless, the error becomes large, 
if $\Re(i \sqrt{\kappa^2-\lambda_n}^\sigma \sigma) \approx 0$ for some $n$, which is the case for $\kappa^2 \approx\lambda_n$ as well as near the branch cuts of the square root. 

\medskip
Theorem \ref{Theo:ConvEWP} yields the following:
\begin{theorem}[PML for resonance problems]
For all compact $\Lambda'\subset\Lambda$ we have
\[
\lim_{h\to 0,N\to\infty}\dist(\SpLam\cap\Lambda',\SpLam_{h,N}\cap\Lambda') = 0.
\]
\end{theorem}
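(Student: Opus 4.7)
The plan is to reduce the statement to Theorem \ref{Theo:ConvEWP} by (i) reusing the verification of Assumptions~\hyperlink{AssA}{A}--\hyperlink{AssE}{E} already carried out in Section~\ref{sec:PML_assumpt} for the PML sesquilinear form $\stot_\kappa$, and (ii) producing a single $\kappa_\ast\in\Lambda$ at which the associated operator $T_{\kappa_\ast}:\Vspace\to\Vspace$ is boundedly invertible.

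For step (i) I would simply note that the preceding subsection checks Assumptions~\hyperlink{AssA}{A}--\hyperlink{AssD}{D} for $\stot_\kappa=\sint-\kappa^2\bint+\sum_l(\aext_l-\kappa^2\bext_l)$. For Assumption~\hyperlink{AssE}{E} one observes that $\stot_\kappa$ is an affine function of $\kappa^2$, hence $\kappa\mapsto T_\kappa$ is holomorphic (indeed polynomial) on $\Lambda$. The spaces $\Vint,\Vext,\tracespace$ and the discrete spaces are independent of $\kappa$, as is $K=(\kappa_0^2+1)J^\ast J$ (which can be chosen with a fixed $\kappa_0$ on any compact $\widehat\Lambda$). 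On each connected component $(\Lambda^\sigma_{\Delta_\Gamma})^{n_0}$ defined in \eqref{eq:components_Lambda} the integer $\nrGM^{n_0}$ is $\kappa$-independent by construction, and the rotations $\theta_n(\kappa)$, the bound $\Cs$ in \eqref{eq:Cs_n} and the coercivity constant $\alpha(\kappa)$ depend continuously on $\kappa$ via \eqref{eq:SetsCoerc2}--\eqref{eq:SetsCoerc3}. Hence on any compact $\widehat\Lambda$ inside one component they can be replaced by uniform constants, so Assumption~\hyperlink{AssE}{E} holds.

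The main step, and the one needing real work, is (ii): producing some $\kappa_\ast\in\Lambda$ that is not a resonance. The cleanest route is to pick $\kappa_\ast$ deep inside the zeroth component, so that $\Re(\kappa_\ast^2)<\lambda_1^{(l)}$ for every waveguide (no propagating modes, $\nrGM=0$) and in addition $\Im(\kappa_\ast^2)<0$ is large in absolute value. Then \eqref{eq:SetsCoerc3} applies with no rotations needed: $\Re\bigl(\sigma_l(\lambda_n-\kappa_\ast^2)\bigr)\geq c(1+\lambda_n)$ uniformly in $n$, so $\sext_l$ is coercive on each $\Vext_l$, and simultaneously $\Re\bigl(-\kappa_\ast^2\int_{\Oi}|u|^2\bigr)$ dominates the compact perturbation contained in $K$, making $\sint$ genuinely coercive on $\Vint$. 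Coupling these estimates via the transmission condition $\tr_-\uint=\tr_+\uext$ gives $\Re\stot_{\kappa_\ast}(u,u)\gtrsim\|u\|_\Vspace^2$, so Lax-Milgram yields invertibility of $T_{\kappa_\ast}$. (Equivalently, one can note that any homogeneous solution would, by the exterior coercivity, have $\uext=0$, hence $\Bdv\uint=0$ on $\bpM$, reducing to an interior Helmholtz problem with complex $\kappa_\ast^2$ that has no nontrivial solutions by the same coercivity argument.)

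With both ingredients in place, Theorem~\ref{Theo:ConvEWP} applies directly and yields discreteness of $\SpLam$ together with the Hausdorff convergence $\dist(\SpLam\cap\Lambda',\SpLam_{h,N}\cap\Lambda')\to 0$ for every compact $\Lambda'\subset\Lambda$. The main obstacle I anticipate is the bookkeeping for step (ii) when $\Lambda$ is not connected: one must ensure that the single invertibility point $\kappa_\ast$ lies in the same component of $\Lambda$ as $\Lambda'$, or apply analytic Fredholm theory component by component, since the discreteness conclusion of Theorem~\ref{Theo:ConvEWP} is really local to each connected component of $\Lambda$.
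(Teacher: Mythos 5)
There is a genuine gap in your step (ii): an invertibility point $\kappa_\ast$ with plain (rotation-free) coercivity does not exist inside $\Lambda$ in general, and even when it does, it does not lie where you need it. Recall $\Lambda$ only contains $\kappa$ with $\Re\kappa>0$, $\Im\kappa\leq 0$. Your candidate region ``below the first cutoff'' exists only if $\lambda_1>0$ (Dirichlet walls); in the Neumann case ($\lambda_1=0$, which is exactly the setting of the paper's numerical experiments) admissibility of $\kappa_1^\sigma=\sqrt{\kappa^2}^\sigma$ along the vertical path in Def.~\ref{def:admissSet} forces $\arg\kappa>-\arg\sigma$, while $\Re(-\sigma\kappa^2)>0$ forces $\arg\kappa<-\pi/4-\arg(\sigma)/2$; these are incompatible for $\arg\sigma\in(0,\pi/2)$, so no $\kappa_\ast\in\Lambda$ makes all coefficients of $\stot_{\kappa_\ast}$ have positive real part. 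Even in the Dirichlet case your $\kappa_\ast$ sits in the component with $\Re\kappa<\sqrt{\lambda_1}$, and, as you yourself observe, analytic Fredholm theory is local to connected components of the disconnected set \eqref{eq:components_Lambda}. Your fallback ``component by component'' cannot be completed by your method: in any component $(\Lambda^\sigma_{\Delta_\Gamma})^{n_0}$ with $n_0\geq 1$, the propagating modes $n\leq n_0$ satisfy, by \eqref{eq:SetsCoerc2}, $\Re\bigl(\sigma(\lambda_n-\kappa^2)\bigr)<0$ throughout the component, and then $\Re\stot_n$ is not even bounded below on $\Xspace_n$ (test with $u_k(x)=e^{-x/k}$, for which $\|u_k'\|_{L^2}^2\to 0$ while $\|u_k\|_{L^2}^2\to\infty$), so Lax--Milgram can never supply an invertibility point there. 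Hence your argument proves the claim at best for $\Lambda'$ contained in the zeroth Dirichlet component, not for arbitrary compact $\Lambda'\subset\Lambda$.

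The paper closes exactly this hole by \emph{enlarging the parameter domain} instead of searching inside $\Lambda$: the form $\stot_\kappa$ is holomorphic on $\Lambda\cup\{z\in\setC:\arg(z)\in(0,\tfrac{\pi}{2})\}$, Assumption~\hyperlink{AssC}{C} is verified on the first-quadrant wedge by the same computations as on $\Lambda$, and the union is connected (each component of $\Lambda$ touches the positive real axis between consecutive cutoffs $\sqrt{\lambda_n}$, and the wedge lies directly above it), so Theorem~\ref{Theo:ConvEWP} applied on the enlarged set needs only \emph{one} invertibility point. That point is taken with $0<\Re\kappa<\Im\kappa$, where $\min\bigl\{1,\Re(-\kappa^2),\Re(1/\sigma),\min_{n\in\setN}\Re\bigl(\sigma(\lambda_n-\kappa^2)/(2+\lambda_n)\bigr)\bigr\}>0$ and Lax--Milgram gives invertibility of $T_\kappa$; discreteness of $\SpLam$ and the Hausdorff convergence then follow on all of $\Lambda$ at once. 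Your coercivity computation is right in spirit --- but it must be run at a point with $\Im\kappa>0$ \emph{outside} $\Lambda$, paying for this with the (cheap) extension of Assumptions~\hyperlink{AssC}{C} and \hyperlink{AssE}{E} to the wedge, rather than inside the admissible set.
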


\begin{proof}
What remains to be shown is that 
there exists a $\kappa$ such that \eqref{eq:PMLformulation} is uniquely solvable. $\stot_\kappa$ depends holomorphically on $\kappa$ for
$\kappa \in \Lambda \cup \{z\in \setC: \arg(z)\in (0,\frac{\pi}{2})\}$. 
Since Ass.~\hyperlink{AssC}{C} can be shown for $\kappa \in\{z\in \setC: \arg(z)\in (0,\frac{\pi}{2})\}$ similar to $\kappa \in \Lambda$,
we can use Theorem \ref{Theo:ConvEWP} for $\Lambda \cup \{z\in \setC: \arg(z)\in (0,\frac{\pi}{2})\}$.
Since for $\kappa$ with $0<\Re(\kappa)<\Im(\kappa)$ the real parts of all the coefficients in $\stot_\kappa$ are positive, i.e.
$$\min \left\{1, \Re\left(-\kappa^2\right), \Re\left(\frac{1}{\sigma}\right),\min_{n\in \setN} 
\Re\left(\frac{\sigma(\lambda_n-\kappa^2)}{2+\lambda_n} \right)  \right\}>0,$$
\eqref{eq:PMLformulation} is for such $\kappa$ uniquely solvable by the Lax-Milgram Lemma and the proof is complete.
\end{proof}

\section{Hardy space method}
As in the previous section we first introduce another equivalent formulation of the radiation condition 
called the pole condition. Based on the pole condition 
we formulate the Hardy space variational problem and use Theorem~\ref{theo:ConvTheo} to show an exponential convergence with respect to the number of degrees of freedom in radial direction. We end this section with the description 
of a suitable choice of the approximating subspace which avoids deterioration of convergence 
for frequencies close to Wood anomalies. 

\subsection{pole condition}
For the discussion of the pole condition we again consider only one waveguide as in \S \ref{sec:scaling_RC}. 
Let $\us(x,y)=\sum_{n=1}^\infty c_n e^{i\kappa_n x}\varphi_n(y)$ be a solution to 
\eqref{eq:Helmholtz} and \eqref{eq:HelmholtzBC} with $\kappa>0$ satisfying the modal radiation condition.  
Then the Laplace transform $\hat u (s,y):=\LT (\us(\bullet,y))(s)$ of $\us$ 
in the infinite direction $x$ is given by
\begin{equation*}
 \hat u (s,y) = \sum_{n=1}^\infty \frac{c_n}{s - i\kappa_n} \varphi_n(y),\qquad \Re(s)>0,\quad y\in \bp.
\end{equation*}
It has a meromorphic extension to $\setC$ with poles at $\{i \kappa_n,~n\in \setN\}$. 
In contrast, the Laplace transform of $e^{-i\kappa_n x}$ has a pole at $-i\kappa_n$. 
Since for real $\kappa$ the numbers $i\kappa_n$ lie on the positive imaginary axis and 
the negative real axis, formally $\us$ satisfies the modal radiation condition if 
and only if $\hat u$ has no poles in a complex half plane $\{\kappa_0 s: s\in\setC,\;\Im s <0\}$
for some $\kappa_0\in\setC$ with $\Re\kappa_0>0$, $\Im\kappa_0>0$, which will be a parameter of the 
method.

We define the M\"obius mapping $m_{\kappa_0}:\setC\setminus\{1\} \to \setC$, $m_{\kappa_0}(z):=i \kappa_0 \frac{z+1}{z-1}$ 
and a corresponding M\"obius transform $\MT:L^2(\kappa_0 \setR) \to L^2(S^1)$ from $\kappa_0 \setR:=\{ \kappa_0 s ~|~s\in \setR\}$ to the complex unit sphere 
$S^1:=\{z \in \setC ~|~|z|=1\}$ via
\begin{equation*}
 (\MT f)(z) := \frac{(f \circ m_{\kappa_0})(z)}{z-1}, \quad z\in S^1\setminus\{1\},\qquad f \in L^2(\kappa_0 \setR).
\end{equation*}
Due to the scaling $(z-1)^{-1}$ the M\"obius transform $\MT$ is unitary up to a constant. 
Applying $\MT$ to the Laplace transformed function $\hat u$
we get
\begin{equation}\label{eq:Un}
\left(\MT \hat u \right)(z)=\sum_{n=1}^\infty \frac{c_n \varphi_n(y)}{i(\kappa_0-\kappa_n)z+i(\kappa_n+\kappa_0)},\qquad z \in S^1,\quad y\in \bp.
\end{equation}

The Hardy space $H^+(S^1)$ is defined as the set of all functions $f\in L^2(S^1)$ for which 
there exists a holomorphic function $v : \{z \in \setC ~|~|z|<1\} \to \setC$ such that 
$\lim_{r \nearrow 1} \int_0^{2\pi}|v(re^{it})-f(e^{it})|^2\,dt =0$.  Equipped with the 
$L^2$-inner product, $H^+(S^1)$ is a Hilbert space (see e.g.\ \cite{Duren:70}).

\begin{defi}[pole condition]\label{def:PC}
Let $\kappa_0\in\setC$ with $\Re\kappa_0>0$ and $\Im\kappa_0>0$. A function 
$u\in H^2_{\rm loc}(\Oe\cup\bpM)$ satisfies the \emph{pole condition with parameter $\kappa_0$} if 
\[
\int_0^{\infty} e^{-s_0x}\|u(x,\cdot)\|_{L^2(\bp)}\,dx<\infty
\]
for some $s_0>0$ and the Laplace transform $(\LT u)(s):=\int_0^{\infty}e^{-sx}u(x,\cdot)\,dx$ 
(with values in $L^2(\bp)$) has a holomorphic extension from $\{s\in\setC:\Re s> s_0\}$ 
to the half-plane $\{\kappa_0 s: s\in\setC,\;\Im s <0\}$ with $L^2$-boundary values on 
$\kappa_0\setR$ such that 
\[
\MT\LT u \in H^+(S^1)\otimes L^2(\bp).
\]
\end{defi}
\begin{lemm}
\label{Lem:EquivPC}
 Let $\kappa \in \setC$ with $\Re(\kappa)>0$ and $\Im(\kappa)\leq 0$ %satisfying \eqref{eq:noWoodAnomaly} 
 and let $\us\in H^1_{\rm loc}(\Oe\cup\bpM)$ be a solution to \eqref{eq:Helmholtz} and \eqref{eq:HelmholtzBC}
with expansion \eqref{eq:seriesPML}  using the definition of $\kappa_n^\sigma$ of Definition \ref{def:BranchCutSquareRoot} with 
$\sigma:=i/\kappa_0$. Moreover, let $\kappa$ belong to the admissible set $\Lambda^\sigma_{\Delta_{\Gamma}}$ defined in Def.~\ref{def:admissSet}.
Then the following statements are equivalent: 
\begin{enumerate}
 \item (modal radiation condition) All coefficients $d_n$ in \eqref{eq:seriesPML} vanish. 
 \item $\us$ satisfies the pole condition with parameter $\kappa_0$. 
\end{enumerate}
\end{lemm}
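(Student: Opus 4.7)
The plan is to prove both directions by direct computation: apply the Laplace transform $\LT$ and the Möbius transform $\MT$ termwise to the expansion \eqref{eq:seriesPML}, then exploit the fact that the "outgoing" modes give rational functions with poles strictly outside the unit disk (hence in $H^+(S^1)$), while "incoming" modes give poles strictly inside (hence not in $H^+(S^1)$). Uniqueness of the modal expansion, together with orthogonality of $\{\varphi_n\}$ in $L^2(\bp)$, then yields the equivalence.

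The key algebraic computation is
\[
(\MT \LT e^{i\kappa_n^\sigma \cdot})(z) = \frac{1}{i(\kappa_0-\kappa_n^\sigma)z+i(\kappa_0+\kappa_n^\sigma)},
\qquad
(\MT \LT e^{-i\kappa_n^\sigma \cdot})(z) = \frac{1}{i(\kappa_0+\kappa_n^\sigma)z+i(\kappa_0-\kappa_n^\sigma)},
\]
with poles $z_n^\pm = -(\kappa_0\mp\kappa_n^\sigma)/(\kappa_0\pm\kappa_n^\sigma)$ that are reciprocals. A short calculation gives $|\kappa_0+\kappa_n^\sigma|^2-|\kappa_0-\kappa_n^\sigma|^2 = 4\Re(\overline{\kappa_0}\kappa_n^\sigma)$. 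Since $\sigma = i/\kappa_0$ satisfies $i\sigma = -1/\kappa_0$, admissibility $\kappa \in \Lambda^{\sigma}_{\Delta_{\Gamma}}$ translates into $\Re(\overline{\kappa_0}\kappa_n^\sigma)>0$ for all $n$, so the pole $z_n^+$ of the outgoing term is strictly outside $S^1$ and the pole $z_n^-$ of the incoming term is strictly inside.

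For the direction (1)$\Rightarrow$(2), I would first note that $d_n=0$ implies $\|u(x,\cdot)\|_{L^2(\bp)}$ is bounded in $x$ (since $\Re(i\kappa_n^\sigma)\leq 0$), so the integrability condition in Def.~\ref{def:PC} holds. Applying $\MT\LT$ termwise gives $(\MT\LT u)(z,y)=\sum_n c_n\varphi_n(y)/[i(\kappa_0-\kappa_n^\sigma)z+i(\kappa_0+\kappa_n^\sigma)]$. Each summand lies in $H^+(S^1)\otimes L^2(\bp)$, and using the identity $\int_{S^1}|z-w|^{-2}|dz|/(2\pi) = (|w|^2-1)^{-1}$ for $|w|>1$ one obtains $\|\MT\LT u\|_{L^2\otimes L^2}^2 = \sum_n |c_n|^2\|\varphi_n\|^2/(4\Re(\overline{\kappa_0}\kappa_n^\sigma))$. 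Using the Weyl asymptotics $\lambda_n\to\infty$ and the explicit asymptotic $\kappa_n^\sigma \sim i\sqrt{\lambda_n}$ shows $\Re(\overline{\kappa_0}\kappa_n^\sigma)\sim \Im(\kappa_0)\sqrt{\lambda_n}$, so convergence follows from $\sum |c_n|^2(1+\lambda_n)^{1/2}<\infty$, which is implied by $u\in H^1_{\rm loc}$.

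For (2)$\Rightarrow$(1), expanding $\LT u(s,\cdot)$ against $\varphi_n$ in $L^2(\bp)$ gives $\langle \LT u(s,\cdot),\varphi_n\rangle = c_n/(s-i\kappa_n^\sigma)+d_n/(s+i\kappa_n^\sigma)$, initially on a right half-plane where both terms converge, and extended meromorphically to the closed lower half-plane $\kappa_0\{\Im s<0\}$ as required. After applying $\MT$, the $n$-th Fourier coefficient with respect to $\varphi_n$ is a rational function with possible poles at $z_n^+$ (outside $S^1$) and $z_n^-$ (inside $S^1$). The pole condition forces this coefficient to lie in $H^+(S^1)$, which removes the interior pole and hence forces $d_n=0$. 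I expect the main technical obstacle to be rigorously justifying the termwise application of $\LT$ and $\MT$ when some $d_n$-terms might grow exponentially: here one must invoke the assumed integrability together with the holomorphic extension in Def.~\ref{def:PC} so that meromorphic continuation — rather than the Laplace integral itself — defines the boundary values on $\kappa_0\setR$, after which the Hardy space argument above applies unchanged.
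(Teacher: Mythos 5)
Your proposal is correct and follows the paper's own proof essentially verbatim: the same termwise M\"obius--Laplace computation, the same consequence of admissibility (the paper states it as $\Re(\kappa_n^{\sigma}/\kappa_0)>0$ and $\left|(\kappa_n^{\sigma}+\kappa_0)/(\kappa_n^{\sigma}-\kappa_0)\right|>1$, which is equivalent to your $\Re(\overline{\kappa_0}\kappa_n^{\sigma})>0$), the same $L^2(S^1)$-norm bound of order $\lambda_n^{-1/2}$ combined with $\sum_n (1+\lambda_n)^{1/2}|c_n|^2<\infty$ for convergence in direction $(1)\Rightarrow(2)$, and the same interior-pole argument forcing $d_n=0$ in direction $(2)\Rightarrow(1)$. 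The only slip is your justification of the integrability condition: for $\Im\kappa<0$ the claim $\Re(i\kappa_n^{\sigma})\leq 0$ fails for the finitely many propagating-type modes, which then grow exponentially (the paper remarks on exactly this after the lemma), but since $\kappa_n^{\sigma}\sim i\sqrt{\lambda_n}$ only finitely many modes grow and one simply chooses $s_0$ larger than their maximal growth rate, so the conclusion stands.
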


\begin{proof}
By definition of $\kappa_n^\sigma$ and $\kappa \in \Lambda^\sigma_{\Delta_{\Gamma}}$ there holds
\begin{equation}\label{eq:prop_kn}
\Re\left(\kappa_n^{\sigma} / \kappa_0 \right)>0\qquad \mbox{and}\qquad 
\left|\frac{\kappa_n^{\sigma}+\kappa_0}{\kappa_n^{\sigma}-\kappa_0}\right|> 1\qquad 
\mbox{for all }n\in\setN. 
\end{equation}
First assume that $\us$ satisfies the modal radiation condition. Then $\MT\LT\us$ is well 
defined and satisfies \eqref{eq:Un} with $\kappa_n=\kappa_n^{\sigma}$. 
Therefore, each term in the series \eqref{eq:Un} belongs to $H^+(S^1)\otimes L^2(\bp)$. 
Moreover, the series converges in $L^2(S^1)\otimes L^2(\bp)$ since 
$\|\frac{1}{i(\kappa_0-\kappa_n^{\sigma})z+i(\kappa_n^{\sigma}+\kappa_0)}\|_{L^2(S^1)}
=\mathcal{O}\paren{\frac{1}{|\kappa_n^{\sigma}|}}=\mathcal{O}\paren{\lambda_n^{-1/2}}$ 
(see \cite[proof of Lemma A.3]{HohageNannen:09}) and 
$\sum_{n=1}^\infty (1+\lambda_n)^{1/2}|c_n|^2 = \|u|_{\bpM}\|_{H^{1/2}(\bpM)}^2<\infty$.

Vice versa assume that $\us$ satisfies the pole condition. Then 
\[
\tfrac{c_n }{i(\kappa_0-\kappa_n^{\sigma})z+i(\kappa_n^{\sigma}+\kappa_0)}
+\tfrac{d_n }{i(\kappa_0+\kappa_n^{\sigma})z-i(\kappa_n^{\sigma}-\kappa_0)}
=\MT\LT\lsp \us,\varphi_n\rsp_{L^2(\bp)}\in H^+(S^1)
\]  
for all $n\in\setN$. 
Since $z\mapsto (i(\kappa_0+\kappa_n^{\sigma})z-i(\kappa_n^{\sigma}-\kappa_0))^{-1}$ has a pole 
at $\frac{\kappa_n^{\sigma}-\kappa_0}{\kappa_n^{\sigma}+\kappa_0}\in \{z\in\setC:|z|<1\}$, it
follows that $d_n=0$. 
\end{proof}

Note that $s_0>0$ in Def.~\ref{def:PC} is needed for frequencies $\kappa$ with $\Im\kappa<0$ since by definition of
$\kappa_n^\sigma$ propagating modes become exponentially 
increasing in this case. However, the pole condition is independent of the choice of $s_0$. 

\subsection{Hardy space variational formulation for one waveguide}
For the details of the Hardy space method in one dimension we refer to \cite[sec.~2]{HohageNannen:09}. The role of the damping 
parameter $\sigma$ is replaced in the HSM by the parameter $\kappa_0 \in \setC$ of the M\"obius transform, 
which satisfies $\Re(\kappa_0)>0$ and $\Im(\kappa_0)>0$. 

For simplicity we introduce the linear, injective and bounded operators $\OpT_\pm:\setC \oplus H^+(S^1) \to H^+(S^1)$ by 
\begin{equation}
 \left(\OpT_\pm \svec{f_0 \\ F} \right)(z):=\frac{1}{2}\left(f_0 + (z\pm 1) F(z) \right),\quad z \in S^1,\qquad \svec{f_0 \\ F} \in \setC \oplus H^+(S^1)
\end{equation}
and recall the equations (2.9) and (2.14) from \cite[sec.~2]{HohageNannen:09}: For suitable $f:[0,\infty)\to \setC$ and $f_0:=f(0)$ there exists a $F \in H^+(S^1)$ such that
\begin{eqnarray}
\label{eq:tpmform}
  \MT \LT f = \frac{1}{i \kappa_0} \OpT_-\svec{f_0 \\ F}\quad\text{and} \quad  \MT \LT \partial_x f=  \OpT_+\svec{f_0 \\ F}.
\end{eqnarray}
Due to the boundedness of $\OpT_\pm$ and the parallelogram identity, there exist constants $C_1,C_2>0$ such that
\begin{equation}
\label{eq:EquiTpm}
  C_1 \left\| \svec{f_0\\ F} \right\|_{\setC \oplus L^2(S^1)}^2 \leq \left\|\OpT_+ \svec{f_0\\ F} \right\|_{L^2(S^1)}^2 + \left\|\OpT_- \svec{f_0\\ F} \right\|_{L^2(S^1)}^2
  \leq C_2 \left\| \svec{f_0\\ F} \right\|_{\setC \oplus L^2(S^1)}^2.
\end{equation}
Similar to \cite[Lemma A.3]{HohageNannen:09}, the space $\Vext:=\Xspace^2 \otimes \Yspace^1 \cap \Xspace^1 \otimes \Yspace^2$ with
\begin{subequations}
 \begin{eqnarray}
  \Xspace^2&:=&
	\setC \oplus H^+(S^1),\quad \lsp \svec{f_0\\F}, \svec{g_0\\G} \rsp_{\Xspace^2} :=f_0 \overline{g_0} + \lsp F,G \rsp_{L^2(S^1)},\\
  \Xspace^1&:=&\text{completion of } \setC \oplus H^+(S^1) \text{ w.r.t. } \notag\\
  &&\lsp \svec{f_0\\F}, \svec{g_0\\G} \rsp_{\Xspace^1} := \lsp \OpT_- \svec{f_0 \\ F},\OpT_- \svec{g_0 \\ G} \rsp_{L^2(S^1)},\\
  \Yspace^1&:=&L^2(\bp),\qquad \Yspace^2:=H^1(\bp)  
 \end{eqnarray}
and
\begin{equation}
 \lsp \svec{f_0\\F}, \svec{g_0\\G} \rsp_{\Vext} := \lsp \svec{f_0\\F}, \svec{g_0\\G} \rsp_{\Xspace^2\otimes \Yspace^1} +  \lsp \svec{f_0\\F}, \svec{g_0\\G} \rsp_{\Xspace^1\otimes \Yspace^2}
\end{equation}
\end{subequations}
is a Hilbert space and fulfills the requirements of the Hardy space method. Note, that
$$\Vext\subset \left(\setC \oplus H^+(S^1)\right) \otimes L^2(\bp)\sim L^2(\bp) \oplus \left(H^+(S^1) \otimes   L^2(\bp)\right).$$
We will denote elements of $\Vext$ in the second form, i.e. $\svec{v_0\\V}\in \Vext$ with $v_0\in L^2(\bp)$ and $V \in H^+(S^1) \otimes   L^2(\bp)$.
Recall from \cite[Lemma A.1]{HohageNannen:09} the identity 
\begin{equation}
\label{eq:BasicId}
 \int_0^\infty f(x)\, g(x)\,dx = \frac{ - i \kappa_0}{ \pi} \int_{S^1} (\MT \LT f)(z) (\MT \LT g)(\overline{z})\,|dz|,
\end{equation}
which is applicable for $u(\bullet,y)$, $v(\bullet,y)$ as well as $\partial_x u(\bullet,y)$ and $\partial_x v(\bullet,y)$ and all $y \in \bp$. 
Using the involution $\OpC :H^+(S^1) \to H^+(S^1)$ defined by 
$(\OpC F)(z) := \overline{F(\overline{z})}$ for $z \in S^1$ and $F\in H^+(S^1)$
as in \cite{HohageNannen:09} we get
\begin{equation*}
 %(\OpC \vML)(z) := \overline{\vML(\overline{z})},\qquad z \in S^1, \vML \in H^+(S^1)
 \int_{S^1} (\MT \LT f)(z) (\OpC\MT \LT g)(\overline{z})\,|dz| = \lsp \MT \LT f , \MT \LT g\rsp_{L^2(S^1)}.
\end{equation*}
Hence, the  exterior Hardy space sesquilinear forms for one waveguide are
\begin{equation}
\label{eq:HSMsesqui}
\begin{aligned}
  \aext\left(\svec{u_{0}\\\uML},\svec{v_{0}\\\vML}\right) :=&  \frac{ - i \kappa_0}{\pi} \lsp (\OpT_+ \otimes \Id_{\bp}) (u_0, \uML) , (\OpT_+ \otimes \Id_{\bp}) (v_0, \vML) \rsp_{L^2(S^1) \otimes L^2(\bp)}\\
  &+ \frac{ - i \kappa_0}{\pi} \frac{1}{(i \kappa_0)^2} \lsp (\OpT_- \otimes \nabla_{\bp}) (u_0, \uML) , (\OpT_- \otimes \nabla_{\bp}) (v_0, \vML) \rsp_{H^1(S^1) \otimes L^2_{\rm tan}(\bp)},\\
  \bext\left(\svec{u_{0}\\\uML},\svec{v_{0}\\\vML}\right) :=& \frac{ - i \kappa_0}{\pi} \frac{1}{(i \kappa_0)^2} \lsp (\OpT_- \otimes \Id_{\bp}) (u_0, \uML) , 
  (\OpT_- \otimes \Id_{\bp}) (v_0, \vML) \rsp_{H^1(S^1) \otimes L^2(\bp)}.
\end{aligned}
\end{equation} 
$L^2_{\rm tan}(\bp)$ denotes the space of square integrable tangential vector fields on $\bp$. 
For a single waveguide, the Hardy space variational formulation is to find the solution $(u_0,\uML) \in \Vext$ of 
\begin{equation}
 \aext\left(\svec{u_{0}\\\uML},\svec{v_{0}\\\vML}\right) - \kappa^2 \bext\left(\svec{u_{0}\\\uML},\svec{v_{0}\\\vML}\right) = F\left(\svec{v_{0}\\\vML}\right),\qquad \svec{v_{0}\\\vML} \in \Vext,
\end{equation}
for one $F \in {\Vext}^*$.

\subsection{convergence of the Hardy space method}
\label{seq:HSM_assumpt}
Similar to Sec.~\ref{sec:PML_assumpt} we check the assumptions point by point. For simplicity, we again use 
Dirichlet boundary condition and only one single waveguide.

\medskip
\emph{Assumption~\hyperlink{AssA}{A}: Exterior and interior spaces} and \emph{Assumption \hyperlink{AssB}{B}:  separation of $\Vext$.} Most of these assumptions hold true as in the PML case since we use the same interior space,
 the same spaces $\Yspace_1$ and $\Yspace_2$ and the same orthogonal system $\{\varphi_n: n\in \setN\}\subset \Yspace_2$ as in the PML case. The assumptions on $\Vext$ hold true by construction. 
 The boundedness and surjectivity of the trace operator $\tr_+:\Vext \to \tracespace:=H^{1/2}(\bp)$ 
 defined by
 $$ \tr_+ \svec{v_0\\V}:=v_0,\qquad \svec{v_0\\V}\in \Vext,
 $$
 can be proven similar to \cite[Lemma A.3]{HohageNannen:09}. The modal exterior sesquilinear forms defined in \eqref{eq:sn_Xn} are
 \begin{equation}
 \begin {aligned}
  \stot_n \left( \svec{u_{0}\\\uML},\svec{v_{0}\\\vML}  \right) :=&  \frac{ - i \kappa_0}{\pi} \lsp \OpT_+  \svec{u_{0}\\\uML} , \OpT_+  \svec{v_{0}\\\vML} \rsp_{L^2(S^1)} \\
  &+  (\lambda_n-\kappa^2) \frac{ i }{\kappa_0 \pi} \lsp \OpT_-  \svec{u_{0}\\\uML} , \OpT_-  \svec{v_{0}\\\vML} \rsp_{L^2(S^1)}
 \end {aligned}  
 \end{equation}
 and the modal spaces are $\Xspace_n:=\setC \oplus H^+(S^1)$ with 
 \begin{equation}
 \lsp\svec{u_{0}\\\uML},\svec{v_{0}\\\vML}\rsp_{\Xspace_n}:=u_0 \overline{v_0} + \lsp \uML,\vML\rsp_{L^2(S^1)}+(1+ \lambda_n)\lsp \OpT_- \svec{u_{0}\\\uML}, \OpT_-\svec{v_{0}\\\vML} \rsp_{L^2(S^1)}.
 \end{equation}

 \medskip
 \emph{Assumption~\hyperlink{AssC}{C}: boundedness and coercivity.} Continuity of $\stot_n$ independent of $n$ follows with the continuity of $\OpT_\pm$. For the coercivity, 
 we have due to \eqref{eq:EquiTpm} the same situation as in the PML case with $\sigma:=i /\kappa_0$. Hence, $\nrGM$ and $\theta_1,\dots,\theta_n$ are exactly
 the same as for the PML case and the coercivity constant $\alpha$ differs only by a positive constant (independent of $n$) from the constant in the PML case.
 
 \medskip
\emph{Assumption~\hyperlink{AssD}{D}: discrete subspaces.} We choose the interior finite element space $\Vint_h$ as 
in section \ref{sec:setting} and $\Yspace_h:=\tr_-\Vint_h$. Since the
trigonometric monomials are an orthogonal basis of $H^+(S^1)$, we choose 
$\Xspace_N:=\setC \oplus \Span \{z^0,\dots,z^{N-2}\} \subset \setC \oplus H^+(S^1)$.

\medskip
\emph{Assumption~\hyperlink{AssE}{E}: eigenproblem setting.}
The argument is again very similar to that for PML.

\medskip
Since all assumptions to Theorem~\ref{theo:ConvTheo} are fulfilled, we have the following convergence theorem.

\begin{theorem}[HSM for diffraction problems]
\label{Theo:ConvHSMWG}
Let $\kappa_0^l \in \setC$ with $\Re(\kappa_0^l),\Im(\kappa_0^l)>0$ for $l=1,\dots,L$ and $\kappa \in \setC$ with $\Re(\kappa)>0$, $\Im(\kappa)\leq 0$, 
 $\kappa^2 \not \in \bigcup_{l=1}^L \setSp(-\Delta_l)$ and 
$\Lambda:=\bigcap_{l=1}^L \Lambda^{i/\kappa_0^l}_{\Delta_l}$. 
If $\kappa \in \Lambda$ is not a resonance, then equation 
\eqref{eq:PMLformulation} with the exterior Hardy space sesquilinear forms of 
\eqref{eq:HSMsesqui}
 is uniquely solvable with solution $(\uint, \uext)^\top \in \Vspace$, and  
 there exists a constant $h_0>0$ such that the discrete variational problems
 \begin{equation}
\label{eq:HSMformulation_disc}
 \atot \paren{\svec{\uint_h\\ \uext_{h,N}},\svec{\vint_h\\ \vext_{h,N}}} - \kappa^2 \btot \paren{\svec{\uint_h\\ \uext_{h,N}},\svec{\vint_h\\ \vext_{h,N}}} = F\paren{\svec{\vint_h\\ \vext_{h,N}}}, 
 \quad \svec{\vint_h\\ \vext_{h,N}}\in \Vspace_{h,N}
\end{equation}
have a unique solution $(\uint_h, \uext_{h,N})^\top \in \Vspace_{h,N}$ for all $h\leq h_0$ and all $N\in \setN$. Moreover, there exists a constant $C>0$ independent of $h$ and $N$ such that
\begin{equation*}
\norm{\svec{\uint\\ \uext}-\svec{\uint_h\\ \uext_{h,N}}}_{\Vspace} 
\leq C\inf_{(w^{\rm int}_h, w^{\rm ext}_{h,N})^\top \in\Vspace_{h,N}}\norm{\svec{\uint\\ \uext}-\svec{w^{\rm int}_h\\ w^{\rm ext}_{h,N}}}_{\Vspace}.
\end{equation*}
\end{theorem}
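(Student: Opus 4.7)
The plan is to invoke the abstract convergence Theorem~\ref{theo:ConvTheo} directly on the HSM variational formulation. Two things must be supplied: first, verification of Assumptions \hyperlink{AssA}{A}--\hyperlink{AssD}{D} in the HSM setting, and second, verification that the continuous homogeneous HSM problem has only the trivial solution under the assumption that $\kappa$ is not a resonance. The quasi-optimal error bound and existence of $h_0$ then come straight out of the conclusion of Theorem~\ref{theo:ConvTheo}.

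For the first point, Section~\ref{seq:HSM_assumpt} already discharges \hyperlink{AssA}{A}, \hyperlink{AssB}{B}, and \hyperlink{AssD}{D} with the Hardy space choices of $\Xspace^1, \Xspace^2, \Yspace^1, \Yspace^2$ and exterior sesquilinear forms \eqref{eq:HSMsesqui}, so the substantive task is \hyperlink{AssC}{C}. Boundedness of $\stot_n$ with a constant independent of $n$ is immediate from continuity of the operators $\OpT_\pm$, since the $(1+\lambda_n)$-weighted $\OpT_-$ piece is absorbed into the norm of $\Xspace_n$. For the coercivity estimates \eqref{eq:Ass_sn_coerc1}--\eqref{eq:Ass_sn_coerc2}, the norm equivalence \eqref{eq:EquiTpm} reduces the analysis to exactly the two-case argument from Section~\ref{sec:PML_assumpt}, with the identification $\sigma := i/\kappa_0$: the partition of $\Lambda^\sigma_{\Delta_\Gamma}$ into the components \eqref{eq:components_Lambda} yields the same rotations $\theta_n$ and the same finite number $\nrGM$ of guided modes, and the tail estimate shows that $\alpha:=\inf_n \alpha_n>0$. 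The compact-perturbation estimate \eqref{eq:compact_pert} for $\sint$ is unchanged from the verification after Assumption~\hyperlink{AssC}{C}, since the interior form is the same as in the basic setting.

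For the second point, I would show that if $u_{\rm hom}=\svec{\uint\\ \uext}\in\Vspace$ satisfies $\stot(u_{\rm hom},v)=0$ for all $v\in\Vspace$, then $\uint$ together with the waveguide functions obtained by applying $\MT^{-1}\LT^{-1}$ component-wise to $\uext$ yields a function $u\in H^1_{\rm loc}(\Omega)$ with $-\Delta u=\kappa^2 u$ in $\Omega$, $\Bdv u=0$ on $\partial\Omega$, and the pole condition with parameter $\kappa_0^l$ in each waveguide $W_l$. The construction of the HSM forms from the basic identities \eqref{eq:BasicId} and \eqref{eq:tpmform} is reversible, and the trace matching is built into the definition of $\Vspace$ via $\tr_+\uext=\tr_-\uint$. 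Invoking the multi-waveguide version of Lemma~\ref{Lem:EquivPC} converts the pole condition into the modal radiation condition, which by Lemma~\ref{lemm:equivalence_complex_scal_rc} is equivalent to the complex scaling radiation condition with parameter $\sigma_l=i/\kappa_0^l$. Thus $u$ is a resonance function in the sense of the definition preceding Section~\ref{sec:PML_assumpt}, and the assumption that $\kappa$ is not a resonance forces $u=0$, whence $u_{\rm hom}=0$.

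The main obstacle I expect is the careful bookkeeping in the second point: making the correspondence between abstract elements of $\Vspace$ and $H^1_{\rm loc}$ functions on $\Omega$ precise enough that the two radiation conditions can be compared. One must verify both that an HSM weak solution really encodes $(-\Delta-\kappa^2)u=0$ in each waveguide (not merely in the interior), and that the pole condition from Definition~\ref{def:PC} is genuinely satisfied by the inverse-transformed waveguide components. The first follows by testing \eqref{eq:HSMformulation_disc} against Hardy functions with compact support in the waveguide direction and inverting \eqref{eq:tpmform}; the second is essentially the observation that $\uext_l\in\Vext_l$ already lives in $H^+(S^1)\otimes L^2(\bp_l)$ by construction, so no additional analyticity argument is needed. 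Everything else is then a direct citation of the abstract theorem.
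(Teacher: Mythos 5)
Your proposal matches the paper's own proof essentially step for step: the paper likewise verifies Assumptions A--D in the Hardy space setting (boundedness of $\stot_n$ from continuity of $\OpT_\pm$, coercivity reduced via the norm equivalence \eqref{eq:EquiTpm} to the PML two-case argument with $\sigma=i/\kappa_0$, giving the same $\theta_n$, $\nrGM$, and an $\alpha$ differing only by a positive constant) and then cites Theorem~\ref{theo:ConvTheo}, with the injectivity hypothesis supplied by the non-resonance assumption through the equivalence of the pole condition and the modal radiation condition (Lemma~\ref{Lem:EquivPC}, paralleling Lemma~\ref{lemm:equivalence_complex_scal_rc} for PML). Your second point is actually worked out in more detail than in the paper, which leaves the correspondence between homogeneous HSM solutions and resonance functions largely implicit.
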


\begin{theorem}[HSM for resonance problems]
Let $\Lambda$ be defined as in Theorem \ref{Theo:ConvHSMWG}. Then 
for all compact $\Lambda'\subset\Lambda$ we have
\[
\lim_{h\to 0,N\to\infty}\dist(\SpLam\cap\Lambda',\SpLam_{h,N}\cap\Lambda') = 0.
\]
\end{theorem}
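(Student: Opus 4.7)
The plan is to invoke Theorem~\ref{Theo:ConvEWP} directly. Assumptions~\hyperlink{AssA}{A}--\hyperlink{AssD}{D} have just been verified for the HSM forms in Section~\ref{seq:HSM_assumpt}, and I would verify Assumption~\hyperlink{AssE}{E} essentially as in the PML case: $\stot_\kappa$ depends polynomially (hence holomorphically) on $\kappa$ via the factor $\kappa^2$ in $\bext$ and in $K=(\kappa^2+1)J^*J$, while the M\"obius parameters $\kappa_0^l$ stay fixed. Local uniformity of $\varphi_n$, $\nrGM$, $\theta_n$, $\alpha$, $\Cs$ and $K$ on compact subsets of $\Lambda$ follows from the same case distinction used for PML, with $\sigma_l:=i/\kappa_0^l$ playing the role of the PML parameter; by \eqref{eq:EquiTpm} the HSM modal forms $\stot_n$ have the same boundedness and coercivity structure as the PML modal forms up to equivalent norms, so the numbers $\nrGM$, $\theta_n$ and $\alpha$ can be transferred with only constant changes.

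The one nontrivial ingredient not already supplied by Assumptions~\hyperlink{AssA}{A}--\hyperlink{AssE}{E} is the existence of a single $\kappa$ at which $T_\kappa$ is invertible. I would mimic the corresponding step in the PML resonance proof by enlarging $\Lambda$ by the open sector $\Xi:=\{z\in\setC:0<\arg z<\pi/2\}$, on which the HSM form extends holomorphically, and testing at some $\kappa\in\Xi$ with $0<\Re\kappa<\Im\kappa$. For such $\kappa$ one has $\Re(-\kappa^2)>0$ and $\Im\kappa^2=2\Re\kappa\,\Im\kappa>0$; combined with $\Re(-i\kappa_0^l)=\Im\kappa_0^l>0$ and $\Re(i/\kappa_0^l)=\Im\kappa_0^l/|\kappa_0^l|^2>0$, this will force the real part of each coefficient in the HSM modal form to be strictly positive, since
\[
\Re\!\paren{(\lambda_n-\kappa^2)\tfrac{i}{\kappa_0^l}}
=\tfrac{\Im\kappa_0^l}{|\kappa_0^l|^2}(\lambda_n-\Re\kappa^2)+\tfrac{\Re\kappa_0^l}{|\kappa_0^l|^2}\Im\kappa^2>0.
\]
This yields ordinary coercivity (not merely $\OpS$-coercivity) of $\stot_\kappa$ on all of $\Vspace$, and Lax--Milgram then gives invertibility of $T_\kappa$ at this test point.

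With that single invertibility in hand, analytic Fredholm theory (as recalled at the start of \S\ref{Sec:Res}) together with Theorem~\ref{Theo:ConvEWP} applied on $\Lambda\cup\Xi$ delivers discreteness of $\SpLam$ without accumulation points and the Hausdorff convergence \eqref{eq:Hausdorff_spectra} on every compact $\Lambda'\subset\Lambda$. The main obstacle I anticipate is the coercivity check in the enlarged sector: one must verify \emph{simultaneously} that the $\OpT_+$-term and the modal $\OpT_-$-term acquire positive real parts, and this hinges on the joint sign structure of $\kappa_0^l$ (fixed in the open first quadrant) and the chosen test frequency. Everything else is a direct transcription of the PML resonance argument.
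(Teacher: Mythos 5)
Your proposal is correct and is essentially the paper's own argument: the paper proves this theorem implicitly by the verification of Assumptions~\hyperlink{AssA}{A}--\hyperlink{AssE}{E} in Sec.~\ref{seq:HSM_assumpt} (transferring the PML coercivity structure with $\sigma=i/\kappa_0$ via \eqref{eq:EquiTpm}) together with Theorem~\ref{Theo:ConvEWP}, and the invertibility of $T_\kappa$ at one test frequency in the sector $\{0<\arg\kappa<\pi/2\}$ by Lax--Milgram, exactly as in the PML resonance proof. Your explicit computation $\Re\bigl((\lambda_n-\kappa^2)\tfrac{i}{\kappa_0^l}\bigr)>0$ for $0<\Re\kappa<\Im\kappa$ correctly fills in the one step the paper states only for PML.
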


There is no truncation error in the Hardy space method. Nevertheless, if $\kappa^2 \approx \lambda_n$ for some $n$ the approximation error can be large: For a single waveguide
 $\uML$ is given by (see \eqref{eq:Un} together with \eqref{eq:tpmform})
 \begin{equation}\label{eq:defi_zetan}
  \uML(z,y)= \sum_{n=1}^\infty \frac{\tilde c_n }{1/\zeta_n- z} \varphi_n(y)=\sum_{n=1}^\infty \tilde c_n \varphi_n(y) \sum_{j=0}^\infty \zeta_n^{j+1} z^j  \text{ with } \zeta_n:=\frac{\kappa_n-\kappa_0}{\kappa_n+\kappa_0}.
 \end{equation}
W.l.o.g. we assume $(\tilde c_n)_n$ to be exponentially decaying. This is always the case, if there exists a positive distance $a$ of $\bpM$ to a source of the scattered wave
due to the exponentially decaying evanescent modes $e^{i \kappa_n a}$. With \eqref{eq:prop_kn} we have $|\zeta_n|<1$ and we 
can estimate the square of the exterior approximation error for each mode 
 $(u_{0,n},\uML_n)^\top \in \Xspace_n$ by
  \begin{equation}
 \label{eq:errorestimateHSmmodal}
  \begin{aligned}
    &(1+\lambda_n) \inf_{v_0 \in \setC} |u_{0,n} -v_0|^2 + (3+2 \lambda_n) \inf_{\vML \in \{z^0, z^1 ,\dots, z^N\}} \|\uML_n -\vML\|_{L^2(S^1)}^2 \\
     &\quad = (3+2 \lambda_n) |\tilde c_n|^2 \sum_{j=N+1}^\infty |\zeta_n|^{2(j+1)} %\\
     %&\quad 
     = \frac{(3+2 \lambda_n) |\tilde c_n|^2  |\zeta_n|^{2(N+2)}}{1-|\zeta_n|^2}.
  \end{aligned}
 \end{equation}
 For fixed $n$ we see an exponential decay with increasing number of degrees of freedom $N+2$. For fixed $N$, exponential convergence in $n$
 follows with exponentially decreasing $(\tilde c_n)_n$ and $\lambda_n \in \calO(n^2)$, since for $n \to \infty$
 \begin{equation*}
 \frac{ (3+2 \lambda_n)  |\zeta_n|^{2(N+2)}}{1-|\zeta_n|^2 }\leq \tilde C\frac{ 3+2 \lambda_n }{|1-\zeta_n|^2 } = 
 \frac{\tilde C}{4 |\kappa_0|^2} (3+2 \lambda_n)|\sqrt{\kappa^2-\lambda_n} + \kappa_0|^2.
 \end{equation*}
 Similar to the PML error, the error becomes large, if $|\zeta_n|\approx 1$ for one $n \in \setN$, which is the 
 case for $\kappa^2\approx\lambda_n$ (i.e. $\zeta_n\approx -1$) and near the branch cuts of the 
 square root for $\kappa_n$.

\subsection{Modified Hardy space method}
\label{Sec:Mod_HSM}
Nevertheless, for diffraction problems with given frequency and given wavenumbers we are able to modify the HSM slightly to get rid of the problem for $\kappa^2\approx \lambda_n$, i.e.\
$|\kappa_n|$ small and $|\zeta_n| \approx 1$ in \eqref{eq:defi_zetan}. 
The problem arises since the approximation of the mode 
\begin{equation*}
 b_{\zeta} (z):=\frac{1}{1-{\zeta} z } = \sum_{j=0}^\infty \zeta^j z^j
\end{equation*}
with the monomials $z^0, \dots, z^N$ is bad for $|\zeta|\approx 1$. Hence, if $\kappa_n$ and therefore $\zeta_n$ is known 
and if one of the $|\zeta_n|$ is near to $1$, it seems reasonable to include this critical mode to the basis
\begin{equation*}
\tilde \Xspace_N:=\setC \oplus \Span \{z^0, z^1 ,\dots, z^N, \frac{1}{1-\zeta z }\}   \subset \setC\oplus H^+(S^1).
\end{equation*}
Note that $\zeta_n \to 1$ for $n \to \infty$, but since $(\tilde c_n)_n$ decreases exponentially, $|\zeta_n|\approx 1$ is only a problem, if this happens for small $n$.

The discrete operators $\tilde \OpT_\pm^N : \tilde \Xspace_N \to \Span \{z^0, z^1 ,\dots, z^N,z^{N+1}, \frac{1}{1-\zeta z }\}$ and
the usual operators  $\OpT_\pm^N :\Xspace_N \to \Span \{z^0, z^1 ,\dots, z^N,z^{N+1}\}$ are described by
the matrices 
\begin{equation*}
\tilde \OpT_\pm^N:=\left(\begin{array} {ccccc}
1 & \pm 1 & 0 & 0  & -\frac{1}{d}\\
0 & \ddots & \ddots& 0 & 0\\
0 & 0&   1&\pm1 &0\\
0&0&0&1&0\\
0&0&0&0&\frac{1}{d} \pm1
 \end{array} \right),\quad 
 \OpT_\pm^N:=\left(\begin{array} {cccc}
1 & \pm 1 & 0 & 0  \\
0 & \ddots & \ddots& 0 \\
0 & 0&   1&\pm1 \\
0&0&0&1\\
 \end{array} \right).
\end{equation*}
We define the bilinear form in \eqref{eq:BasicId} by
\begin{equation*}
  q_{\kappa_0}(\uML,\vML) := \frac{ - i \kappa_0}{ \pi} \int_{S^1}\uML(z)\,\vML(\overline{z})\,|dz|,\qquad \uML,\vML\in H^+(S^1).
\end{equation*}
The monomials are orthogonal to each other, and therefore $q_{\kappa_0}(z^j, j^k)=-2 i \kappa_0 \delta_{j,k}$. For $b_{\zeta}$ we compute
\begin{equation*}
 q_{\kappa_0}(b_{\zeta}, z^j) = -2 i \kappa_0 \zeta^j,\qquad q_{\kappa_0}(b_{\zeta}, b_{\zeta})
=\frac{-2 i \kappa_0}{1-\zeta^2}.
\end{equation*}
If we define the matrices $M_{\kappa_0}$, $\tilde M_{\kappa_0}$, $S_{\kappa_0}$ and $ \tilde S_{\kappa_0}$ by 
\begin{equation*}
 \left(M_{\kappa_0} \right)_{jk}:= \frac{1}{(i \kappa_0)^2} q_{\kappa_0}(\OpT_-^N b_l, \OpT_-^N b_k ), \qquad \left(S_{\kappa_0} \right)_{jk}:= q_{\kappa_0}(\OpT_+^N b_l, \OpT_+^N b_k )
\end{equation*}
for $b_j,b_k \in \Xspace_N$  and $b_j,b_k \in \tilde \Xspace_N$ respectively, we get for the usual Hardy space method
\begin{equation*}
 M_{\kappa_0} := \frac{2 i}{\kappa_0} \left(\OpT_-^N \right)^T \OpT_-^N , \quad  S_{\kappa_0} := -2 i \kappa_0 \left(\OpT_+^N \right)^T \OpT_+^N 
\end{equation*}
and for the modified one
\begin{equation*}
 \tilde M_{\kappa_0} := \frac{2 i}{\kappa_0} \left(\tilde \OpT_-^N \right)^T D \, \tilde \OpT_-^N , \quad  \tilde S_{\kappa_0} := -2 i \kappa_0 \left(\tilde \OpT_+^N \right)^T D \, \tilde\OpT_+^N 
\end{equation*}
with
\begin{equation*}
D:= \left(\begin{array} {ccccc}
1 & 0 & \cdots & 0 & \zeta^0\\
0 & \ddots & & & \zeta^{1}\\
\vdots & & \ddots & &\vdots\\
0 & & &1  &\zeta^{N}\\
\zeta^{0} &\zeta^{1} &\cdots&\zeta^{N}& \frac{1}{1-\zeta^{2}}
 \end{array} \right).
\end{equation*}
This modification of the Hardy space method is covered by our theory if $|\zeta|\neq 1$. It improves the approximation error a lot, if $|\zeta|\approx 1$. The condition of the 
 system matrix will become large if $|\zeta|$ is not in the neighborhood of $1$ since then the extra basis function is well approximated by the other basis functions.% Sloppily speaking,

 \begin{rem}
 There exist strategies to improve the PML in the case of small effective damping as well. In \cite{ZschiedrichKloseSchaedleetal:2006} 
 an adaptive procedure to chose the thickness of the damping layer is presented, which was amongst others used for the simulation of a 3d plasmonic waveguide
 \cite{BurgerZschiedrichPomplunetal:2010}. Moreover, at least for positive $\kappa$ the mesh in the damping layer should be coarser with increasing $x$, since 
 typically the highly oscillating waves ($\Re(\kappa_n)$ large) needing a fine mesh are damped out quickly.
\end{rem}

\section{Numerical Results}
There exist several numerical studies for diffraction and resonance problems for two-dimensional waveguide problems using PML and Hardy space methods
\cite{HeinKochNannen:10,HeinKochNannen:12}. Here, we confine ourselves to one detailed convergence study for a 3d diffraction problem and one numerical computation of a resonance problem.
All the computations were done in the finite element code Netgen/Ngsolve \cite{netgen} using openMP parallelization with the direct solver PARDISO \cite{Pardiso} or MPI parallelization
with the sparse direct solver MUMPS.

\subsection{Scattering problem}
We consider a single tube $\Omega:=\setR \times \bp$ with the unit disk  
$\bp:=B_1(0)\subset \setR^2$ as cross section and homogeneous
Neumann boundary conditions $\Bdv u =\diffq{u}{\nu}=0$ on 
$\partial\Omega$. The interior domain was chosen as 
$\Oi:=(0,1)\times B_1(0)$, and the two components of the 
exterior domain $\Oe=W_1\cup W_2$ are $W_1:=(1,\infty)\times B_1(0)$ 
and $W_2:=(-\infty,0)\times B_1(0)$.
The eigenfunctions of $-\Delta:\{\varphi\in H^2(\bp):\diffq{\phi}{\nu}=0
\mbox{ on }\partial \bp\} \to L^2(\bp)$ 
are 
\begin{equation*}
 \varphi_{\pm m,n}(r\cos\theta,r\sin\theta)
=J_{m}\left(\mu_{m,n} r \right) e^{\pm i m \theta},\qquad m\in \setN_0, n\in \setN.
\end{equation*}
$J_{m}$ are the Bessel functions and $\mu_{m,n}$ the $n$th root of $J_{m}'$. 
The first corresponding eigenvalues are $\lambda_1=\mu_{0,1}^2=0$,
\begin{equation*}
 \begin{aligned}
  &\lambda_2=\lambda_3=\mu_{1,1}^2 \approx 1.84118378134^2, \qquad &&
\lambda_4=\lambda_5=\mu_{2,1}^2 \approx 3.05423692823^2,\\
  &\lambda_6:=\mu_{0,2}^2 \approx 3.83170597021^2,\qquad && 
\lambda_7=\lambda_8:=\mu_{3,1}^2 \approx 4.20118894121^2.
 \end{aligned}
\end{equation*}
For all the computations we have chosen as incoming wave a superposition of 
$5$ waveguide modes (see Fig.~\ref{Fig:ModesWG} for the real part of the modes) 
using the eigenfunctions to the first
$5$ eigenvalues neglecting the multiplicities:
\begin{equation*}
 u_{\rm inc}(x,y):=\sum_{n \in \{1,2,4,6,7\}} 
e^{i\sqrt{\kappa^2-\lambda_n} x}\varphi_n(y),\qquad x\in\setR,y\in\bp.
\end{equation*}
The interior domain for all computations in this subsections is $\Oi=(0,1)\times B_1(0)$ and discretized by 17750 tetrahedrons with maximal mesh size $h=0.1$. 

\begin{figure}
  \capstart
 \begin{center}
  \subfigure{\includegraphics[width=0.19\textwidth]{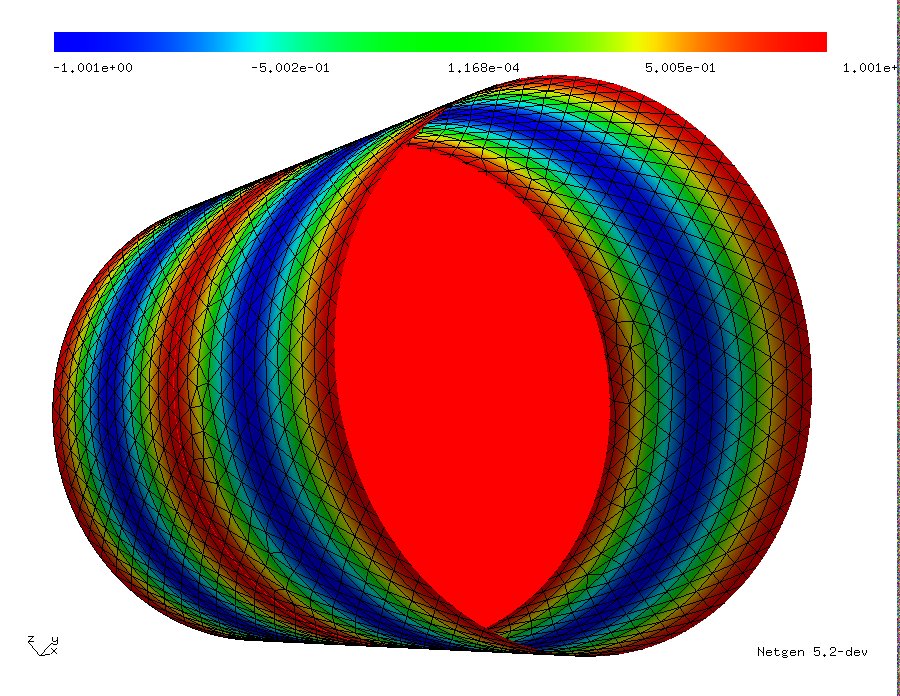}} 
  \subfigure{\includegraphics[width=0.19\textwidth]{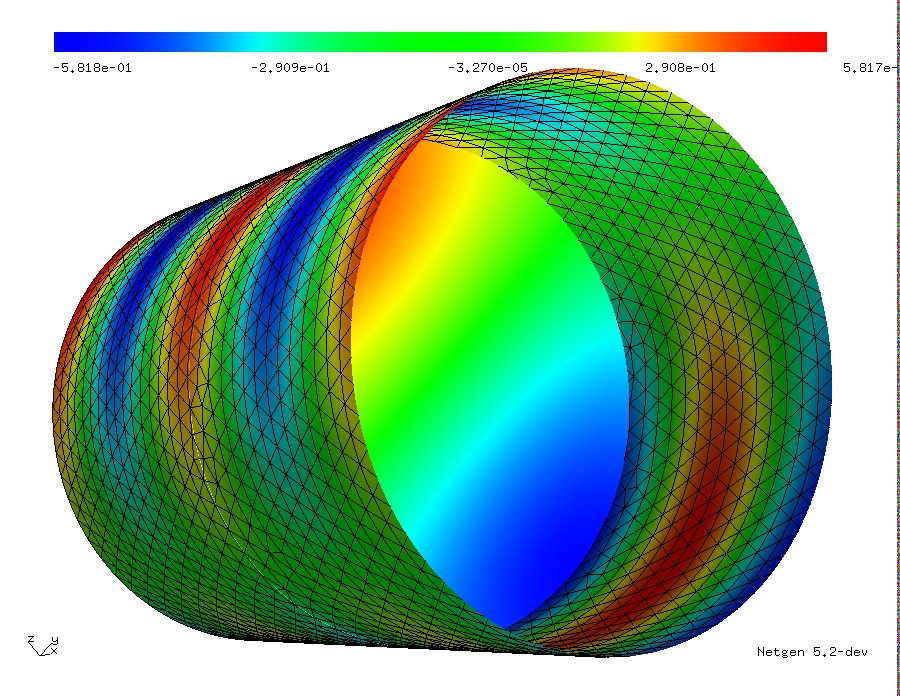}} 
  \subfigure{\includegraphics[width=0.19\textwidth]{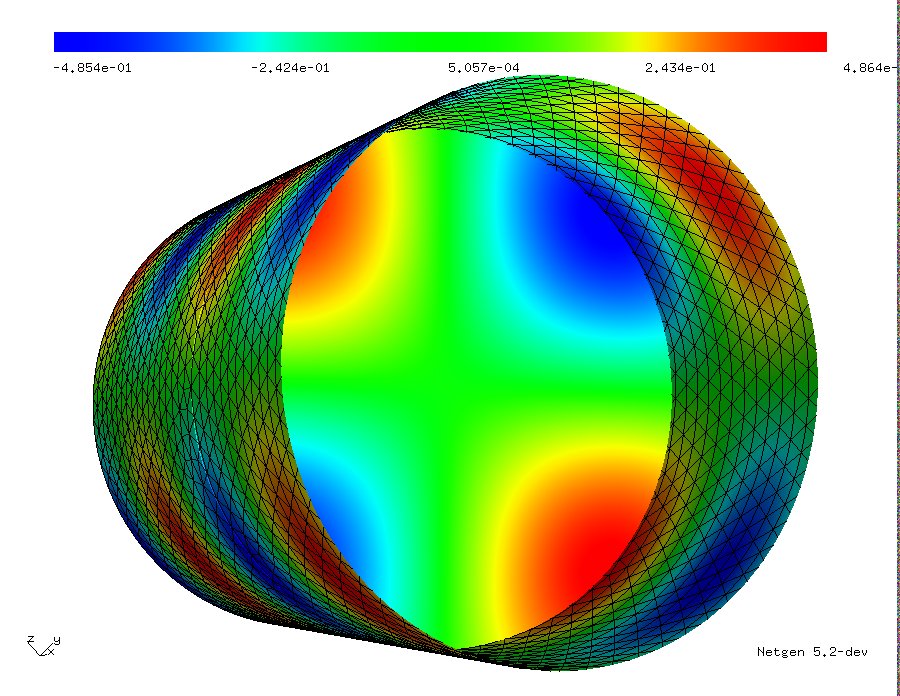}} 
  \subfigure{\includegraphics[width=0.19\textwidth]{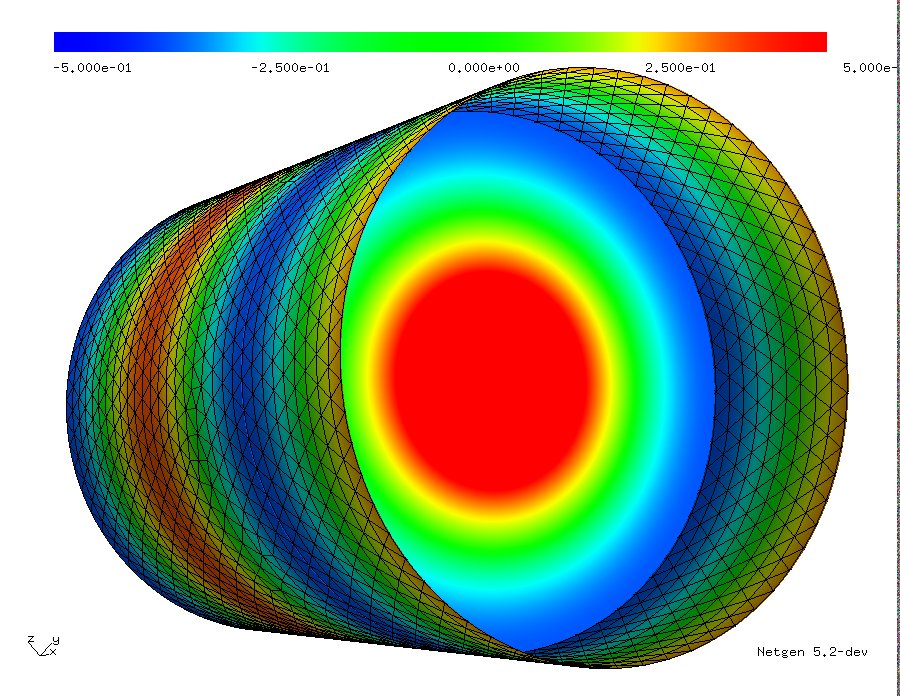}}
  \subfigure{\includegraphics[width=0.19\textwidth]{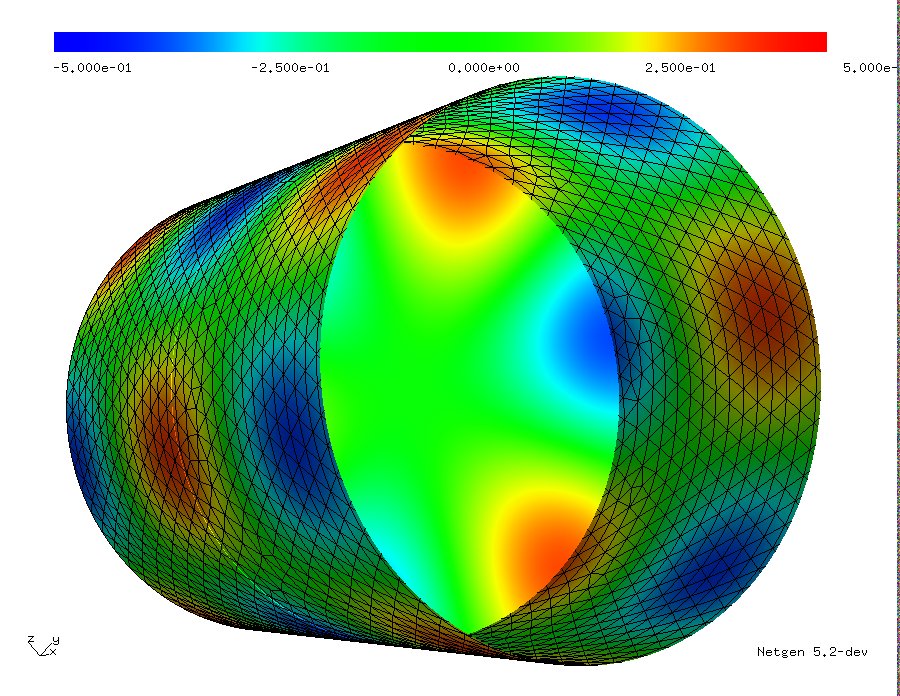}}
  \caption{$5$ waveguide modes to the surface eigenvalues $\lambda_1$, $\lambda_2$, $\lambda_4$, $\lambda_6$ and $\lambda_7$ for $\kappa=5$ and $\bp=B_1(0)$}
  \label{Fig:ModesWG}
 \end{center}
\end{figure}

First, we analyze the dependence of the error of the Hardy space method 
on the complex parameter $\kappa_0$. Neglecting the compact perturbation 
arguments in the proof of Theorem~\ref{theo:ConvTheo} the theoretical 
error bound of \eqref{eq:errorestimateHSmmodal} is 
\begin{equation}\label{eq:eta_explicit}
 \eta(\kappa_0,\kappa,N):=\frac{C(\kappa_0,\kappa)}{\alpha(\kappa_0,\kappa)} \sqrt{\sum_{n \in \{1,2,4,6,7\}} 
 \frac{(3+2\lambda_n)|e^{i\kappa_n(\kappa)}|^2|d_n(\kappa_0,\kappa)|^{2(N+2)}}{1-|d_n(\kappa_0,\kappa)|}},\quad 
\end{equation}
with $\alpha(\kappa_0,\kappa):= \min\{\alpha_1(\kappa_0,\kappa),\dots,\alpha_5(\kappa_0,\kappa)\}$ and
\begin{eqnarray*}
 C(\kappa_0,\kappa)&:=&\frac{4}{\pi} \max\left\{ |\kappa_0|+\left|\frac{\kappa^2}{\kappa_0}\right| ,\frac{1}{|\kappa_0|} \right\},\\
 \alpha_n(\kappa_0,\kappa)&:=&\frac{1}{\pi}  \begin{cases}
           \min\left\{\Re(\kappa_0),\Re\left( \frac{\lambda_n^2-\kappa^2}{\kappa_0(1+\lambda_n)} \right) \right\},\quad &\Re\left( \frac{\lambda_n^2-\kappa^2}{\kappa_0(1+\lambda_n)} \right)>0,\\
           \min\left\{\Im(\kappa_0),\Im\left( \frac{\lambda_n^2-\kappa^2}{\kappa_0(1+\lambda_n)} \right) \right\},\quad &\Re\left( \frac{\lambda_n^2-\kappa^2}{\kappa_0(1+\lambda_n)} \right)\leq0
           \end{cases},\\
 d_n(\kappa_0,\kappa)&:=&\frac{\kappa_n(\kappa)-\kappa_0}{\kappa_n(\kappa)+\kappa_0}.
\end{eqnarray*}
We tested three different frequencies with a sufficiently fine finite element
discretization such that the error of the Hardy space method was dominating.
The results shown in Fig.~\ref{Fig:Bestk0_neu} demonstrate that the bound
\eqref{eq:eta_explicit} represents the dependence of the error on the parameter 
$\kappa_0$ qualitatively correctly and would provide a good guidance for 
the choice of $\kappa_0$ in this example. 

Let us discuss that last two frequencies in Fig.~\ref{Fig:Bestk0_neu}
which are very special. $\kappa=4.2$ is in the neighborhood of $\sqrt{\lambda_7}$, and hence the error of the standard HSM is large even with $20$ degrees of freedom in radial direction (cf.\ Fig.~\ref{Fig:Err_k}). Moreover, it can  
clearly be seen in this case that 
the optimal parameter $\kappa_0$ depends on the wavenumbers $\kappa_n$. 
The ''problematic'' wavenumber $\kappa_5\approx 0.1i$ is very small and therefore
the optimal parameter $\kappa_0$ of the standard HSM would be very small. The modified HSM of Sec.~\ref{Sec:Mod_HSM} resolves this problem completely (see Fig.~\ref{Fig:Err_k}). 

The test for $\kappa=5$ in Fig.~\ref{Fig:Bestk0_neu} is also special, since for $\kappa>\sqrt{\lambda_7}$ all $5$ used modes are guided and no evanescent mode has to be resolved by the Hardy space method. 
Hence, the optimal $\kappa_0$ would be almost real and only very few degrees of freedoms in radial direction are necessary ($N=4$ for a polynomial order $p=6$).
This case would happen in a practical computation 
if the distance of the artificial boundary $\{1\}\times \bp$ to a source or a scatterer is large since then the evanescent modes are already
damped out at $\{1\}\times \bp$. 

\begin{figure}
  \capstart
 \begin{center}
 \subfigure[$\eta(\kappa_0,3.5,6)$]{\includegraphics[width=0.32\textwidth]{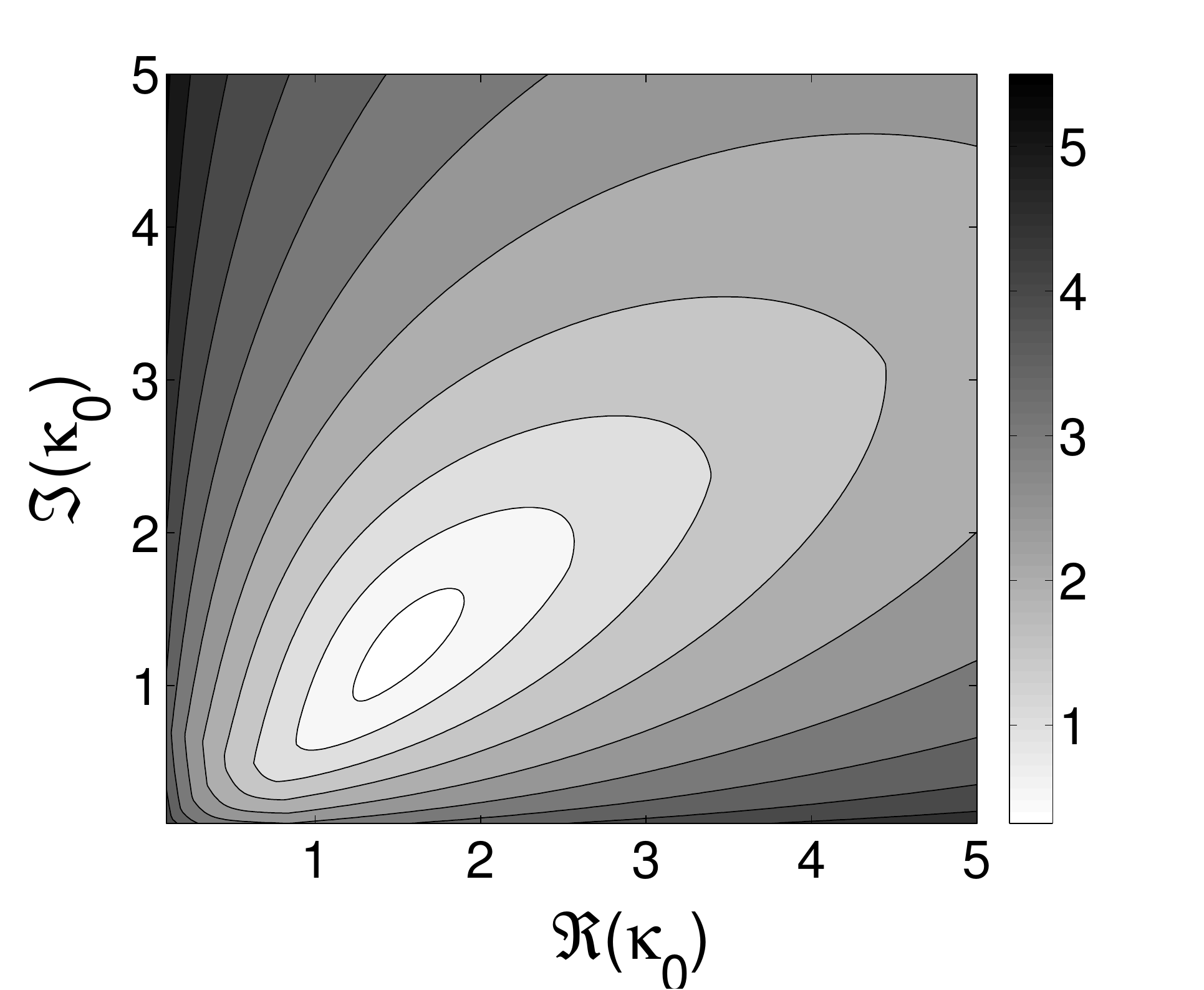}} \hfill
 \subfigure[$\eta(\kappa_0,4.2,20)$]{\includegraphics[width=0.32\textwidth]{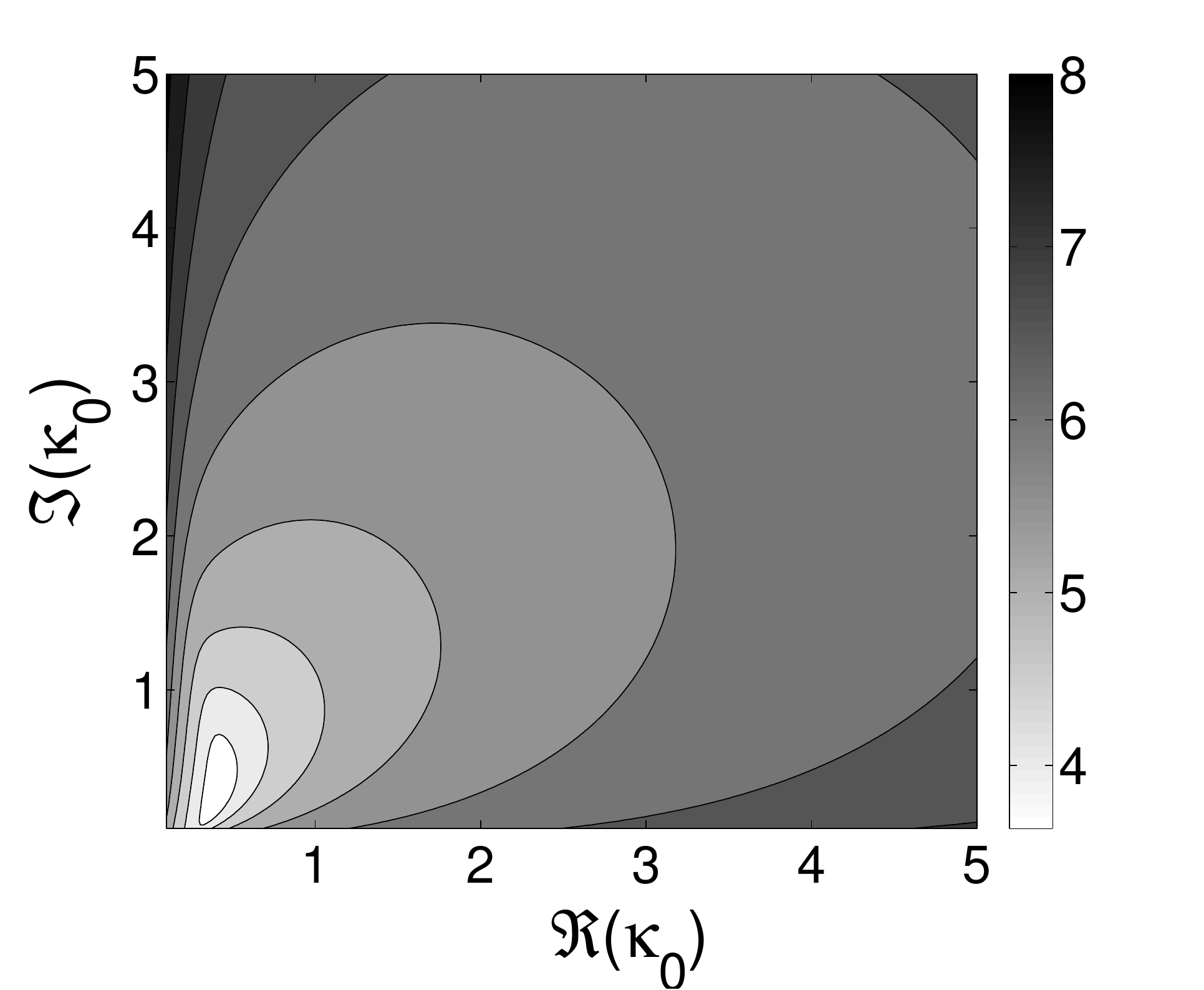}} \hfill
 \subfigure[$\eta(\kappa_0,5,6)$]{\includegraphics[width=0.32\textwidth]{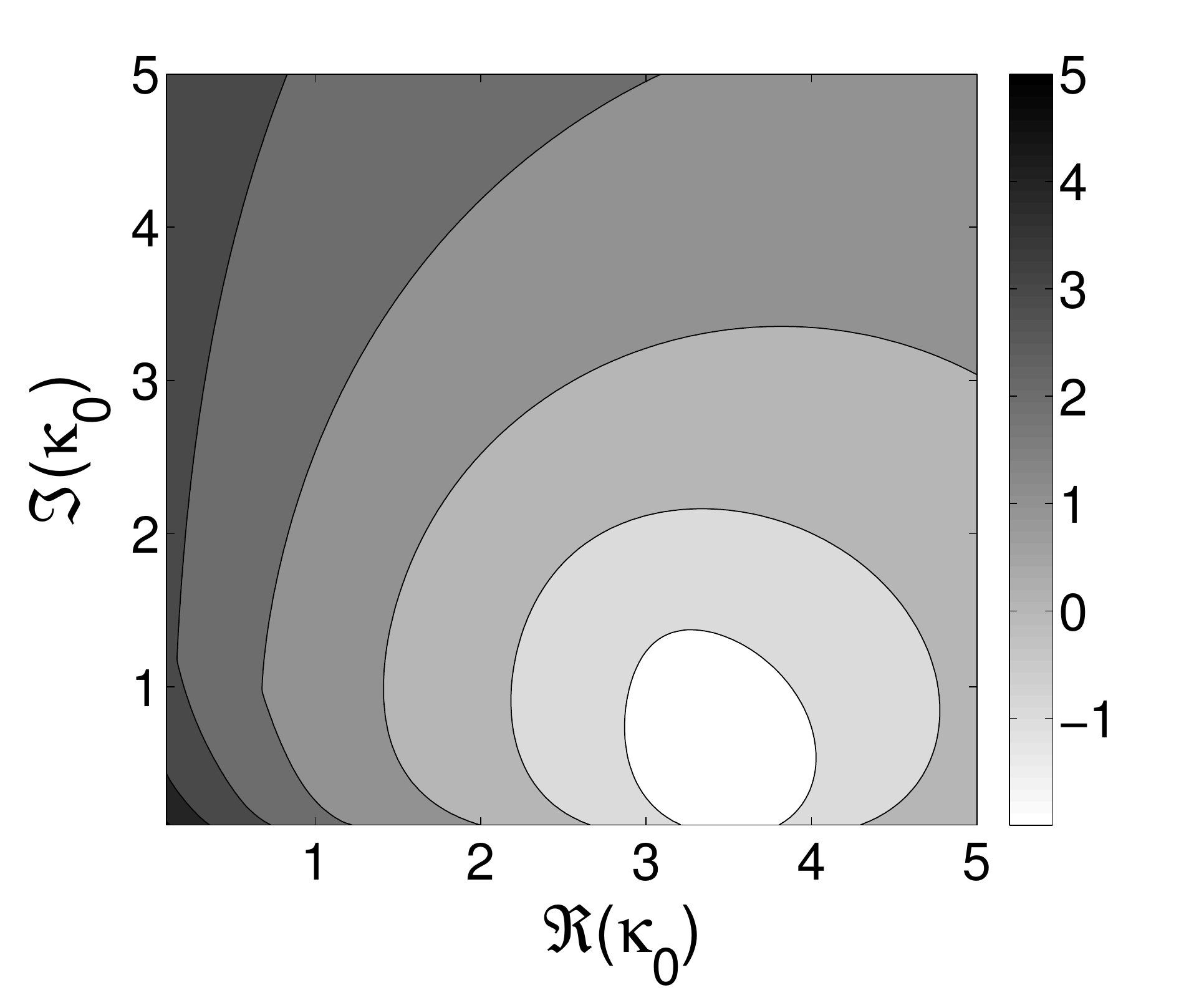}}

 \subfigure[$\kappa=3.5$, $N=6$, $p=5$]{\includegraphics[width=0.32\textwidth]{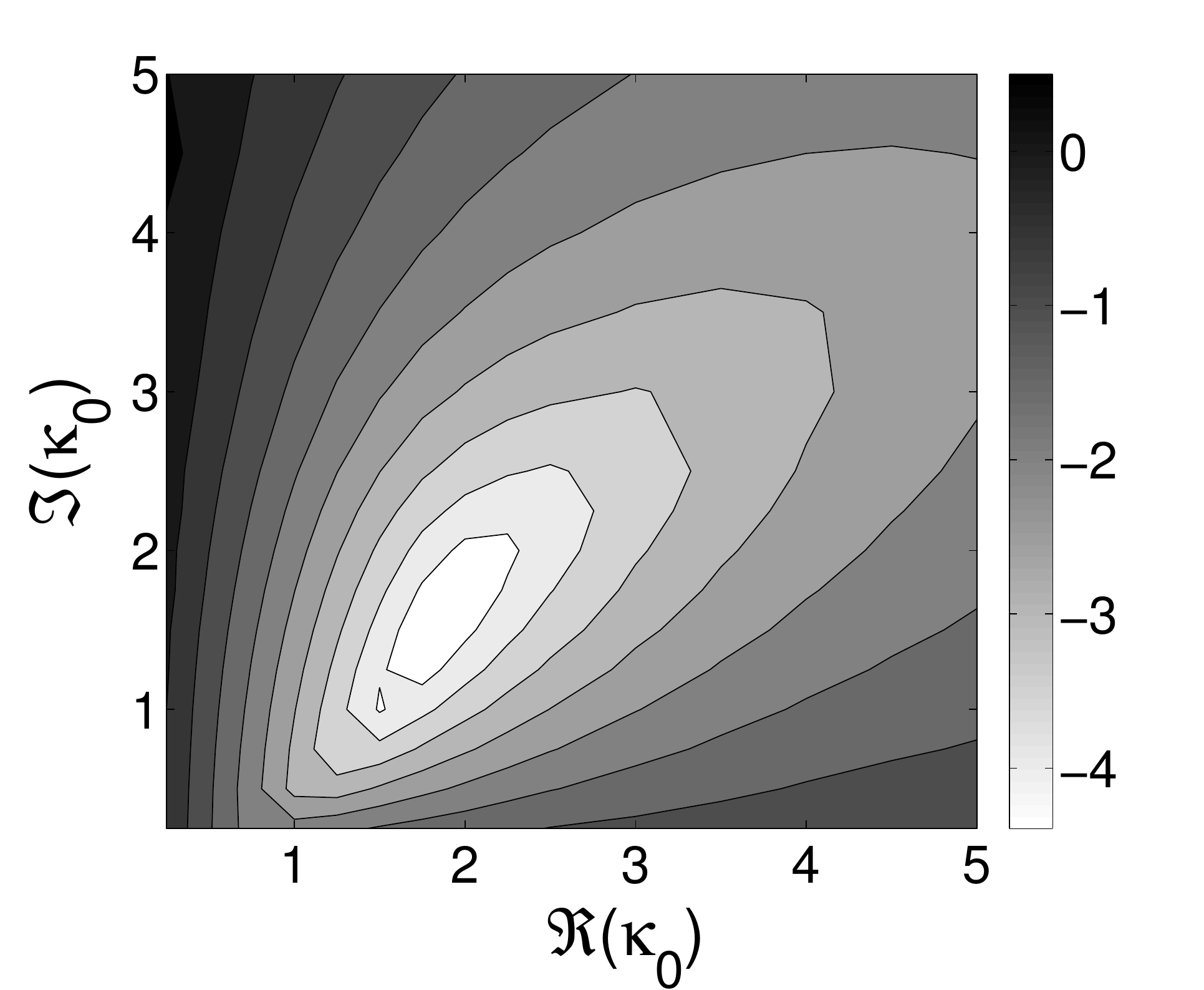}} \hfill
\subfigure[$\kappa=4.2$, $N=20$, $p=4$\label{Fig:Bestk0_neu_num3}]{\includegraphics[width=0.32\textwidth]{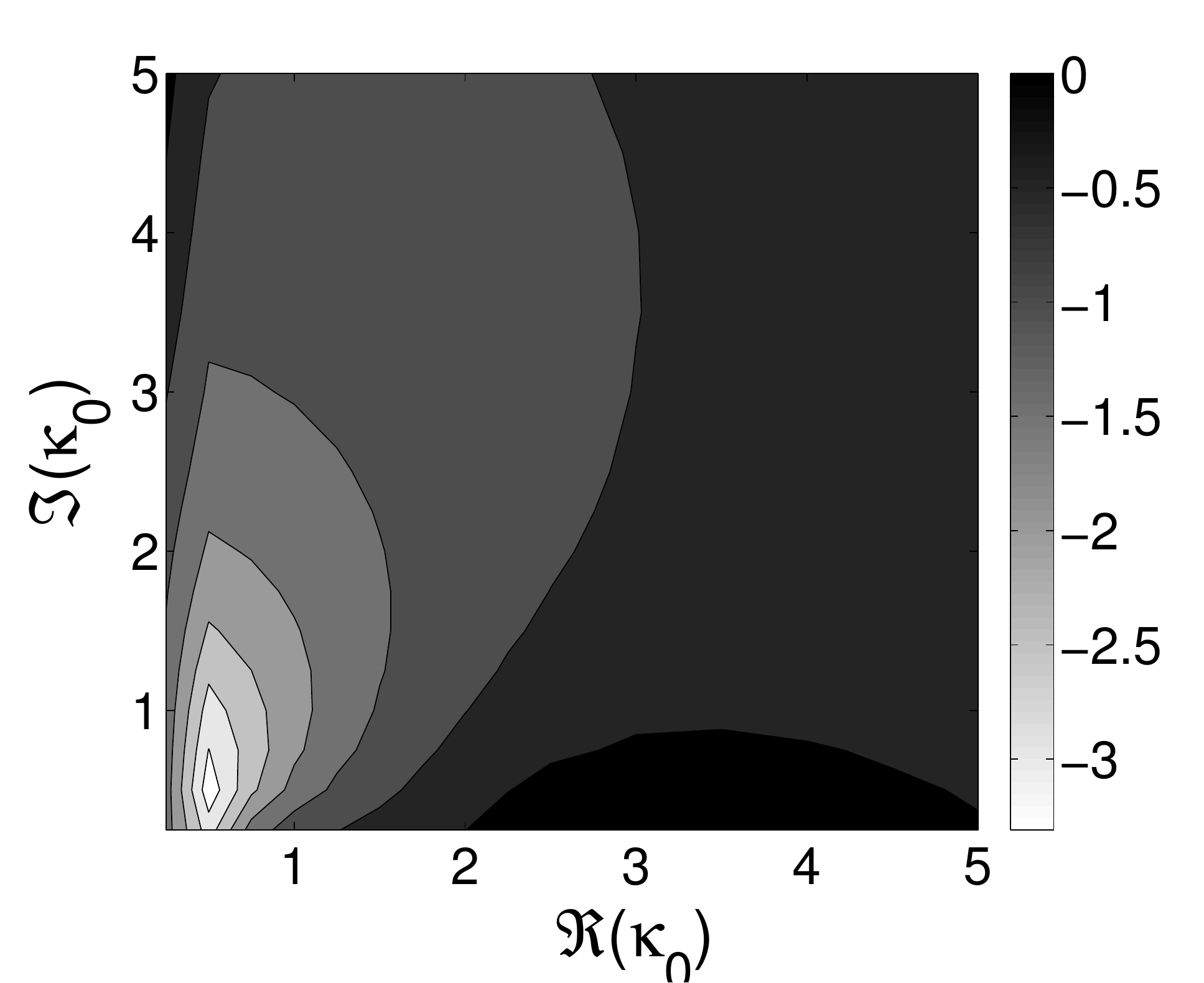}} \hfill
\subfigure[$\kappa=5$, $N=4$, $p=6$\label{Fig:Bestk0_neu_num4}]{\includegraphics[width=0.32\textwidth]{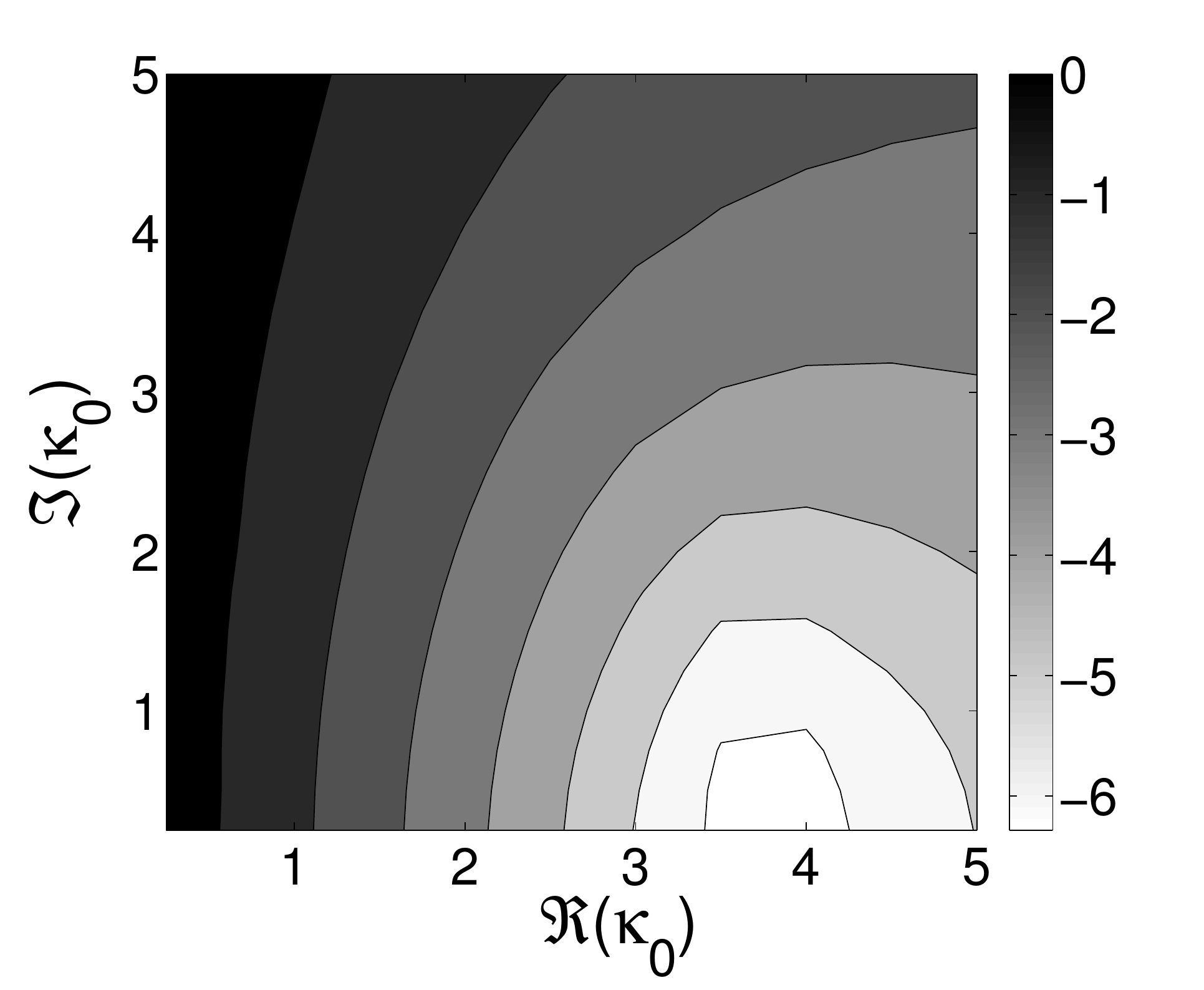}}
 \caption{Study of the dependence of the error of the HSM on the complex 
parameter $\kappa_0$. The upper panels show $\log_{10}\eta$ where $\eta$ is 
approximate error bound in \eqref{eq:eta_explicit}. The lower panel shows 
$\log_{10}\|\uint_{h}-\uint\|_{H^1(\Oi)}$.}
  \label{Fig:Bestk0_neu}
 \end{center}
\end{figure}

Second, we have fixed the parameter $\kappa_0=2+2i$ and computed the relative $H^1(\Oi)$-error for different finite element polynomial orders
and different numbers $N$ of degrees of freedom for the Hardy space method  
(see Fig.~\ref{Fig:Conv_dimH_order}) In the left panel the exponential convergence
of the Hardy space method can be seen. For the most expensive computation with $N=14$ and $p=6$, we have used the MPI parallel sparse direct solver MUMPS with 30 cores and in total 
1 million unknowns. Approx. 30\% of these unknowns were needed for the Hardy space method. The wall time for this computation was approximately 39 minutes, 
37 of them spent for the MUMPS factorization.

\begin{figure}
  \capstart
 \begin{center}
  \subfigure[fixed $\kappa=3.5$, varying $N$ and $p$ \label{Fig:Conv_dimH_order}]{\includegraphics[width=0.46\textwidth]{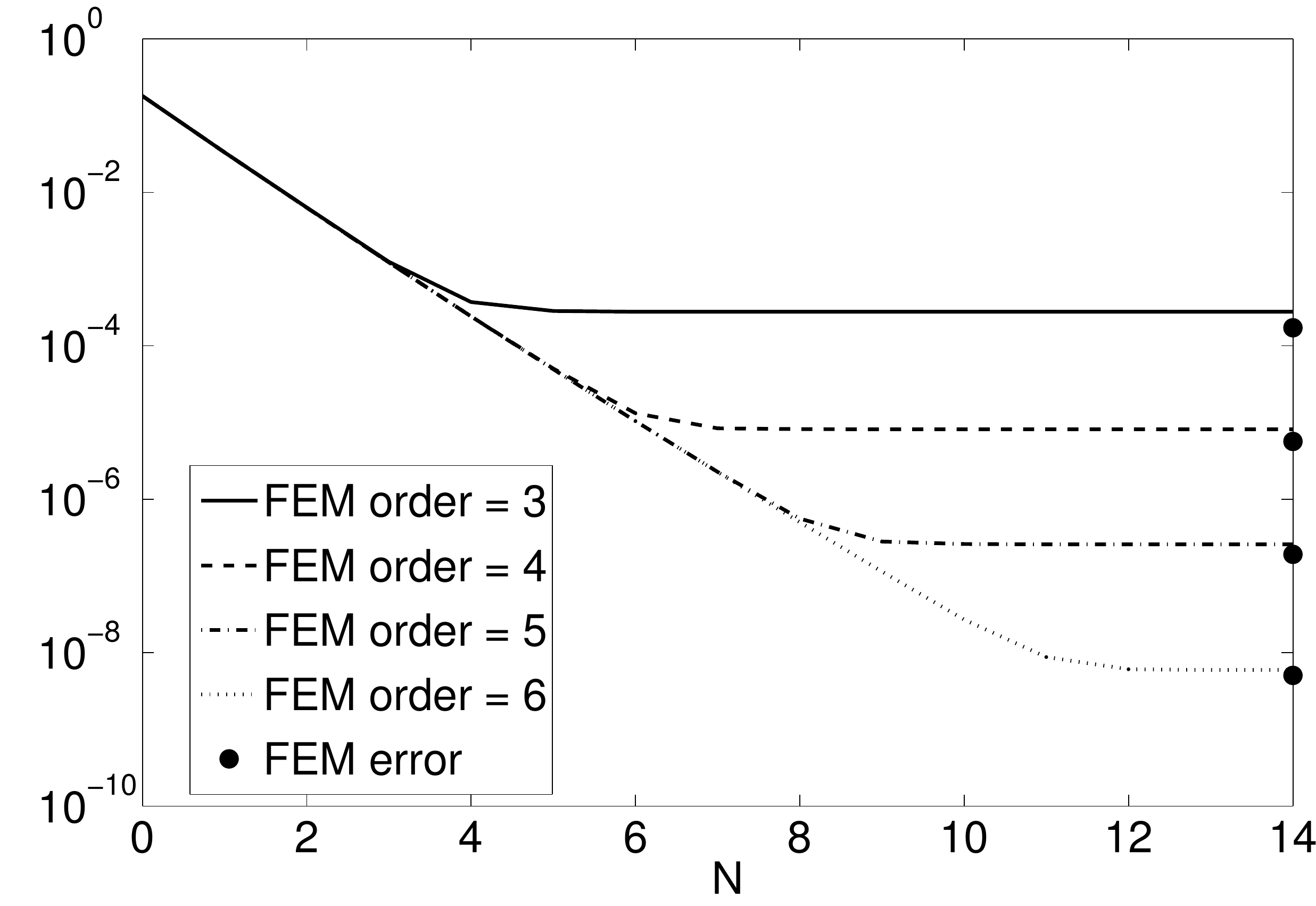}} \hfill
  \subfigure[fixed $N=10$, $p=5$, varying $\kappa$ \label{Fig:Err_k}]{\includegraphics[width=0.52\textwidth]{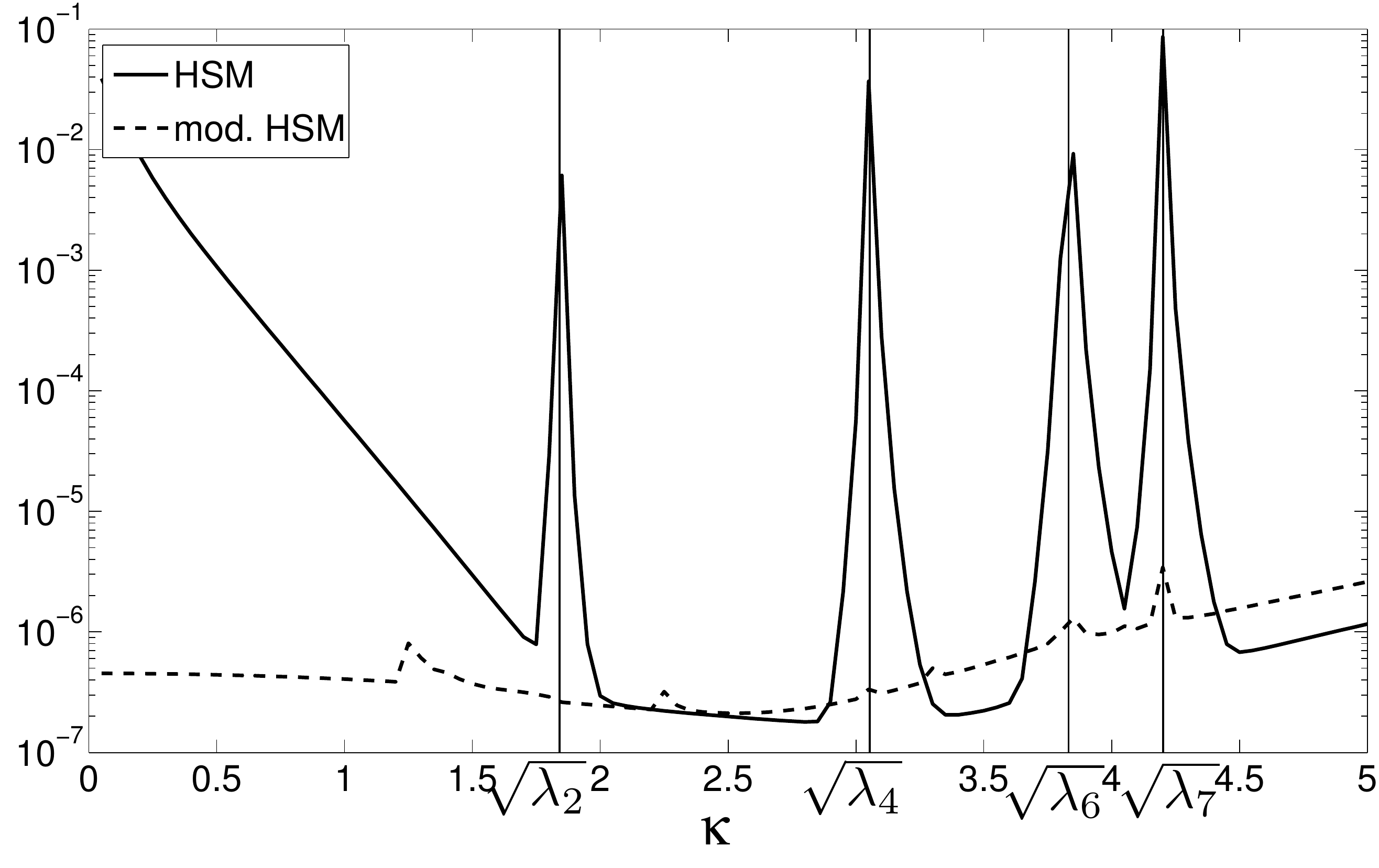}}
  \caption{relative $H^1(\Oi)$-error for $\kappa_0=2+2i$. 
The dots in the left panel represent the pure finite element error with given 
Dirichlet boundary data. For the modified HSM see Sec.~\ref{Sec:Mod_HSM}.}
 \end{center}
\end{figure}

Last, we have computed dependence of the error on the frequency $\kappa$ with fixed $\kappa_0=2+2i$, $N=10$, and fixed finite element discretization. In Sec.~\ref{Sec:Mod_HSM} we have already
mentioned the problem with $\kappa^2 \approx \lambda_n$, which can be seen in Fig.~\ref{Fig:Err_k}. The modified Hardy space method of Sec.~\ref{Sec:Mod_HSM} resolves the problem
completely, However, this modification cannot easily be used for resonance problems since it would lead to nonlinear eigenvalue problems. 

\subsection{Resonance problem}
\begin{figure}
  \capstart
 \begin{center}
  \includegraphics[width=1\textwidth]{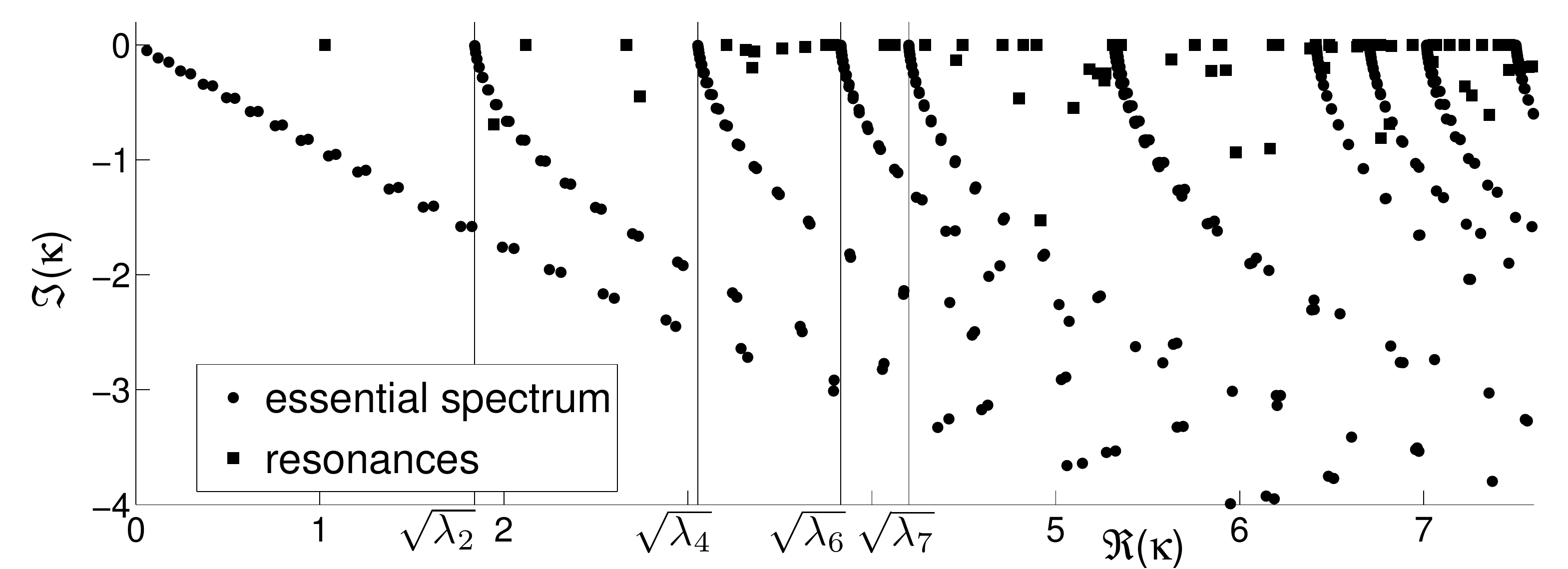}
  \caption{Computed resonances of a circular ring cavity, 
see Fig.~\ref{Fig:ResPlots}. With dots we indicate the discretization
of the continuous spectrum (see e.g.\ \cite{KimPasciak:09} for the analogous 
situation of PML with $\sigma=i /\kappa_0$). 
They build the boundary of the admissible set 
(cf.\ Fig.~\ref{Fig:Res_RingCav} with Fig.~\ref{Fig:domainkappa}).
The boxes indicate computed resonances.
}
  \label{Fig:Res_RingCav}
 \end{center}
\end{figure}

There exist numerical convergence studies to acoustic and electromagnetic resonance problems using the Hardy space method in \cite{NannenSchaedle:09, Nannenetal:13}. Here, we only
present one simple 3d resonance problem, which is an extension of the 2d waveguide cavity problems in \cite{HeinKochNannen:12}. The domain is given by a circular ring 
cavity of radius $2$ and length
$1$ connected with two circular waveguides with radius $1$: 
$\Omega = (-\infty,-0.5) \times B_1(0) \cup
(-0.5, 0.5) \times B_2(0) \cup (0.5,\infty) \times B_1(0)$.

We chose $\Oi:= \Omega \cap (-1,1) \times B_2(0)$ and discretized the
resonance problem with a finite element mesh with 
maximal mesh size $h=0.5$ and $654$ volume elements,
isoparametric elements of order $p=14$ and the Hardy space method for the two waveguides with $\kappa_0=2+2i$ and $N=25$. The first $1000$ resonances computed with
a shift and invert Arnoldi algorithm with fixed shift $\rho=10-i$, the sparse direct solver MUMPS and a Krylov space of dimension $2000$ are given in Fig.~\ref{Fig:Res_RingCav}.

For a closed cylinder of length $1$ and radius $2$ the resonances are 
\begin{equation}\label{eq:EV_closed_cylinder}
 \kappa_{m,n,l}=\sqrt{\left(\frac{\mu_{m,n}}{2}\right)^2+(l \pi)^2},\qquad m,l\in \setN_0, n \in \setN.
\end{equation}
The resonance functions in Fig.~\ref{Fig:ResPlots} are perturbations of the closed cavity eigenfunctions (compare the resonance function in Fig.~\ref{Fig:ResPlotsa} with the
second mode in Fig.~\ref{Fig:ModesWG}). 
For a complex resonance, the imaginary part reflects the energy loss per cycle. Since the only possible energy loss is
the energy radiated to infinity and since only guided modes radiate energy to infinity (see Sec.~\ref{sec:setting}), the resonances
are real, if the resonance function is orthogonal to the finitely many guided modes. This is the case in panel (a): For $\kappa$
with $\Re(\kappa)<\sqrt{\lambda_2}$ only the plane wave is guided and since the resonance function is antisymmetric with respect to
the centerline of the waveguide, it is orthogonal to all guided modes. This suggests that 
the imaginary part of the computed resonance $k_1$ is a numerical error. 

\begin{figure}
  \capstart
 \begin{center}
  \subfigure[$\kappa \approx 1.0273 -10^{-9}i$ \label{Fig:ResPlotsa} ]{\includegraphics[width=0.3\textwidth]{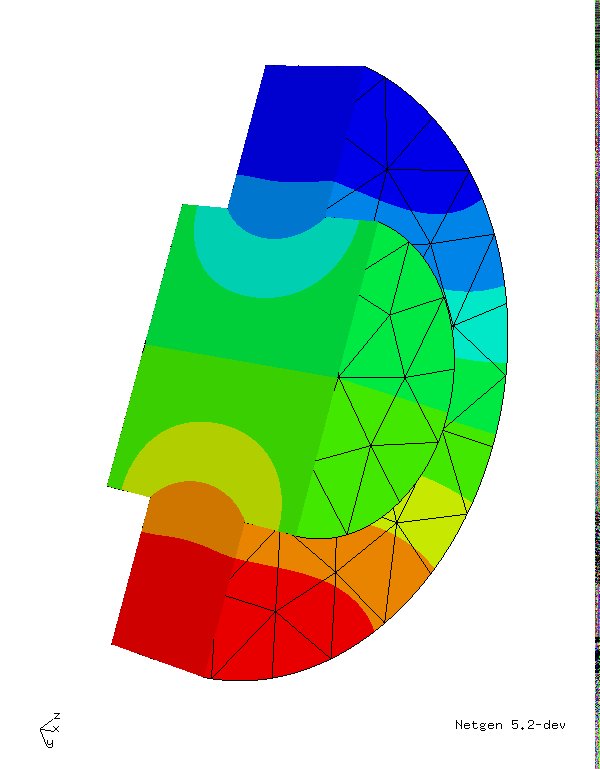}}
  \subfigure[$\kappa \approx 7.2206 -10^{-8}i$ \label{Fig:ResPlotsb} ]{\includegraphics[width=0.3\textwidth]{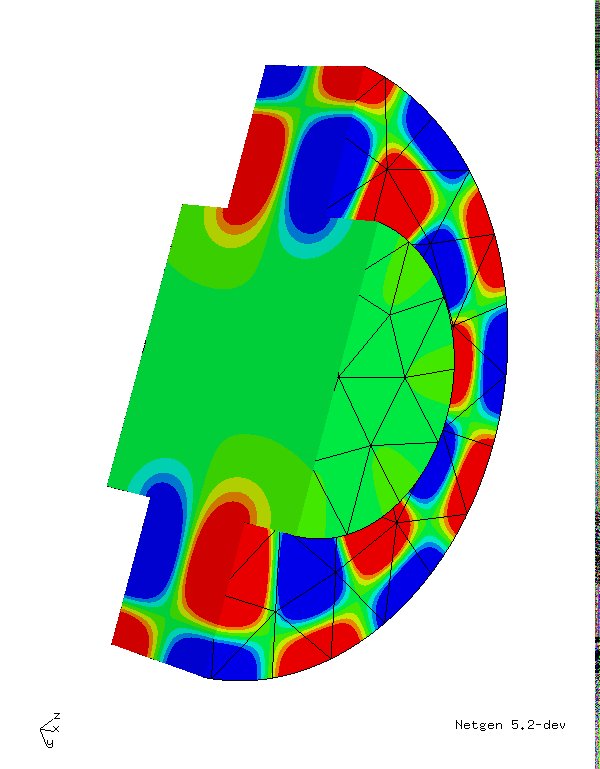}}
  \subfigure[$\kappa \approx  6.8247 - 8.1\cdot 10^{-3}i$ \label{Fig:ResPlotsc}]{\includegraphics[width=0.3\textwidth]{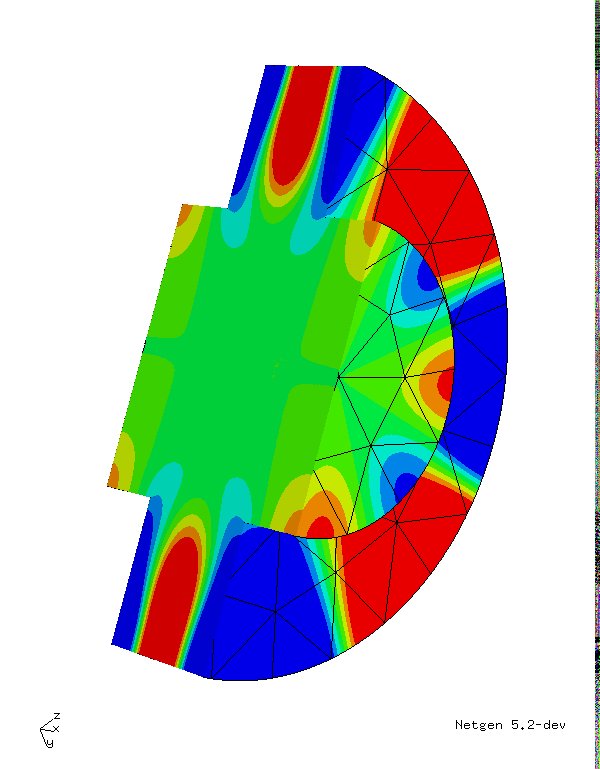}}
  \caption{The real part of $3$ resonance functions of a circular ring cavity 
are displayed  (cf.\ Fig.~\ref{Fig:ResPlots}). These resonances correspond to 
the following eigenvalues of the closed cylinder 
(see \eqref{eq:EV_closed_cylinder}): 
(a): $\kappa_{1,1,0} \approx  0.92059$; 
(b): $\kappa_{7,2,1} \approx  7.18897$;
(c): $\kappa_{5,1,2} \approx  6.82257$.
}
  \label{Fig:ResPlots}
 \end{center}
\end{figure}

These computations show that resonances in domains including open waveguides
can be computed naturally and reliably by the Hardy space method 
since it leads to a discrete eigenvalue problem. In contrast, 
methods which rely on a modal decomposition lead to discrete system 
which depend on the unknown $\kappa$ in a much more complicated way. 

\begin{acknowledgements}
The authors dedicate this work to Werner Koch for his inspiration, generosity and enthusiasm  
concerning the topic of resonances in waveguides. Unfortunately he passed away 
on August 28, 2012.  

Moreover, we would like to thank an anonymous referee for detailed and helpful suggestions 
and corrections. Financial support by the German Science Foundation through grant 
HO 2551/5 is gratefully acknowledged. 
\end{acknowledgements}

% BibTeX users please use one of
%\bibliographystyle{spbasic}      % basic style, author-year citations
%\bibliographystyle{spmpsci}      % mathematics and physical sciences
%\bibliographystyle{spphys}       % APS-like style for physics
%\bibliography{}   % name your BibTeX data base
\bibliographystyle{siam} \bibliography{bibliography}
\end{document}